\providecommand{\U}[1]{\protect\rule{.1in}{.1in}}
\newtheorem{thm}{\bf Theorem}      
\newtheorem{cor}{\bf Corollary}[section]     
\newtheorem{prop}{\bf Proposition}
\newtheorem{defn}{\bf Definition}[section]
\newtheorem{rem}{\bf Remark}
\newenvironment{proof}[1][Proof]{\noindent\textbf{#1.} }{\ \rule{0.5em}{0.5em}}
\normalsize\setlength{\parskip}{1em}
\titlespacing*{\section} {0pt}{9pt}{0pt}
\numberwithin{equation}{section}
\begin{document}
\normalsize
\title{Multi-block linearized alternating direction method for sparse fused Lasso modeling problems}


\author{ 
Xiaofei Wu\thanks{College of Mathematics and Statistics, Chongqing University, Chongqing, 401331, P.R. China. Email: xfwu1016@163.com}, \ \
Rongmei Liang \thanks{Contributed equally as joint first author. College of Mathematical Medicine, Zhejiang Normal University, Jinhua, 321004, P.R. China. Email: rmliang@zjnu.edu.com.}, \ \ 
Zhimin Zhang\thanks{Corresponding author. College of Mathematics and Statistics, Chongqing University, Chongqing, 401331, P.R. China. Email: zmzhang@cqu.edu.cn} \ \ and \ \ Zhenyu Cui \thanks{School of Business, Stevens Institute of Technology, Hoboken, United States, NJ 07030. Email: zcui6@stevens.edu}}
\date{}
\maketitle
\vspace{-.50in}

\begin{abstract}
In many statistical modeling problems, such as classification and regression, it is common to encounter sparse and blocky coefficients. Sparse fused Lasso is specifically designed to recover these sparse and blocky structured features, especially in cases where the design matrix has ultrahigh dimensions, meaning that the number of features significantly surpasses the number of samples. Quantile loss is a well-known robust loss function that is widely used in  statistical modeling. In this paper, we propose a new sparse  fused lasso classification model, and develop a unified multi-block linearized alternating direction method of multipliers  algorithm that effectively selects sparse and blocky features for regression and classification. Our algorithm has been proven to converge with a derived linear convergence rate. Additionally, our algorithm has a significant advantage over existing methods for solving ultrahigh dimensional sparse fused Lasso regression and classification models due to its lower time complexity. Note that the algorithm can be easily extended to solve various existing fused Lasso models. Finally, we present numerical results for several synthetic and real-world examples, which demonstrate the robustness, scalability, and accuracy of the proposed classification model and algorithm.
\end{abstract}

\textbf{Keywords:}  Alternating direction method; Pulse detection; Quantile loss; Statistical modeling; Sparse fused Lasso  
\section{Introduction}
\textcolor{red}{Sparse and blocky (stepwise) features have been observed across various application fields. For instance, they have been extensively studied in the context of prostate cancer analysis by \cite{JFY2015,LLYPE2021}, where it is crucial to consider groups of interacting genes within each pathway rather than individual genes.
In image denoising, as studied by \cite{WYN,TH}, it is common practice to penalize differences between neighboring pixels to achieve smoothness and reduce noise.
In the study of networks by \cite{MP2023,ZZSW2023}, gene regulatory networks from developmentally closer (less distant) cell types exhibit greater similarities than those from more distant cell types.}
Similar situations also occur in comparative genomic hybridization (CGH) by \cite{MAE2018,WLY}, computer vision by \cite{MCH2021,GLY2024}, signal processing by \cite{M2019,LSFX2021}, community multiscale air quality by \cite{YHMF,D2021},  portfolio selection by \cite{CSM,MT2023}, among others.

To effectively extract the sparse and blocky features, Tibshirani et al. \cite{TSRZ} combined Lasso and total variation, proposing the sparse fused Lasso regularization term (SFL).  In addition, they first proposed SFL least squares (SFL-LS) regression and SFL support vector machine (SFL-SVM) classification, and introduced quadratic programming and linear programming techniques to solve SFL-LS and SFL-SVM, respectively. However, as mentioned by \cite{TSRZ},  ``one difficulty in using the sparse fused Lasso is the computational speed, and when $p > 2000$ and $n > 200$, speed could become a practical limitation." \textcolor{red}{This sentence illustrates that solving sparse fused Lasso models becomes challenging when dealing with a large number of features. To be specific, in the case of ultra-high dimensional data, many traditional and efficient algorithms, such as gradient descent and coordinate descent, are no longer suitable due to the presence of feature matrices in classification and regression, as well as the unsmooth and inseparable properties of sparse fused Lasso.}

Over the past two decades, advancements in computer technology have led to the development of several effective algorithms for high-dimensional LS-SFL regression, \textcolor{red}{including the efficient fused Lasso algorithm \cite{LYJ2011}, alternating linearization \cite{XMA2011}, the smoothing proximal gradient method \cite{XQSJ2012}, the majorization-minimization algorithm \cite{YWL}, alternating direction method of multipliers (ADMM) \cite{YX}, and linearized ADMM (LADMM) \cite{LMY}.  However, the above algorithms, except for ADMM and LADMM, only apply differentiable least squares losses and are no longer applicable to other regression and classification losses.} Recently, several ADMM and LADMM algorithms have been proposed to solve SFL regularized regression with various other losses, such as  square root loss \cite{JLD},  least absolute deviation loss \cite{LTZ}, quantile loss \cite{W2023}. \textcolor{red}{For the classification problem of SFL, currently only SFL-SVM (hinge loss plus SFL) in  \cite{TSRZ}  is available. Due to the non differentiability of its loss function and  the existence of SFL regularization, there has been limited research on calculating its high-dimensional situation, with only the ADMM algorithm proposed by  \cite{YX}. However, this algorithm is time-consuming, especially when $p$ is relatively large. In addition, there is a requirement for the convergence of this ADMM algorithm that the solution of SFL-SVM is unique, as pointed out by \cite{W2023}, which is impossible in high-dimensional situations. They also revealed that the condition for this uniqueness is redundant.} 

In this paper, we introduce a novel SFL classification model utilizing the pinball loss from \cite{HSS}, which extends the hinge loss and offers enhanced robustness. We demonstrate that some SFL  regression and classification models share a uniform optimization structure. Leveraging this unified form, we advocate for the utilization of the linearized ADMM algorithm for consistent solution. And the new algorithm proposed in this paper can converge without the need for the uniqueness of \cite{YX}.
 The main contributions of this paper are as follows:
\vspace{-1em}
\begin{enumerate}
\item We propose a pinball SVM with SFL (SFL-PSVM) to achieve robust classification modeling. This robust classification model not only inherits the resampling robustness of pinball loss, but also has the ability to select  sparse and blocky structured features in classification.

\item Sparse fused Lasso  has been applied to many distance-based losses  and margin-based losses  to realize classification  \cite{TSRZ,YX}  and regression  \cite{JLD,W2023}  modeling tasks, respectively. However, algorithms for solving these two tasks usually cannot be shared. In this paper, we point out that both quantile regressions and pinball SVMs with SFL  regularizations can be solved by minimizing piecewise linear optimization  in (\ref{unified}).  One can refer to Section \ref{sec31} for more details. Although our paper only focuses on SFL  regularization, this idea can easily be expended to other common  regularizations because it is flexible for regularizations; see Proposition \ref{prop1}. 

\item The most significant contribution is that we develop a multi-block linearized ADMM (MLADMM) to solve the unified  optimization form for SFL classification and regression. Previous studies have utilized multi-block ADMMs  to address SFL classification \cite{YX} and regression \cite{W2023}.  However, our MLADMM offers two key advantages over existing algorithms.
Firstly, our algorithm requires optimization of fewer primal variables, facilitating faster convergence and improved computational accuracy. Secondly, the previously proposed multi-block ADMM approaches necessitate the computation of the inverse of a large scale ($p \times p$) matrix, which can be time-consuming in cases involving ultrahigh dimensions.
Our algorithm circumvents this computation by incorporating linearized quadratic terms. Furthermore, we employ the power method, as suggested by \cite{L2023}, to compute the maximum eigenvalue of the positive definite matrix required by the linearized ADMM algorithm. Consequently, our multi-block linearized ADMM algorithm exhibits remarkable computational speed advantages over existing multi-block ADMM algorithms, which is also reflected in its algorithmic complexity.

Following \cite{W2023}, the unified optimization objective function can be transformed into a standard two-block ADMM similar to the approach presented in \cite{CBYY}.  Then, we can follow the works in \cite{BY,BY2}, and prove convergence and derive convergence rates of MLADMM. Furthermore, our algorithm proves to be versatile, and it can be extended to solve other SFL models, as demonstrated in \cite{W2023}. These encompass variants such as least squares sparse fused Lasso (SFL-LS), sparse fused Lasso SVM (SFL-SVM), sparse fused Lasso square root (SFL-SR), and sparse fused Lasso SVR (SFL-SVR). For more comprehensive details, please refer to the supplementary material \ref{B}.
\end{enumerate}

\textbf{Organization and Notations}:  In Section \ref{sec2}, we introduce some background knowledge  about classification and regression modeling, and review some existing algorithms for solving SFL problems. We present a unified optimization form of classification and regression, and propose multi-block linearized ADMM algorithm to solve it in Section \ref{sec3}. Convergence and computational cost analysis can also be referred to Section \ref{sec3}.  In Section \ref{sec6}, numerical results will show that the proposed method is scalable, efficient, and accurate even when existing methods are unreliable. Section \ref{sec7} summarizes the findings and concludes the paper with a discussion of future research directions. R codes for implementing MLADMM are available at \url{https://github.com/xfwu1016/LADMM-for-qfLasso}.

For $u \in \mathbb{R}$, let $u_{+}=u \textbf {I}(u>0) $ and  $u_{-}=-u \textbf {I}(u<0)$ be the positive and negative parts of $u$ respectively, where $\textbf {I}(  \cdot  )$ is the indicator function. Then, $\text{sign}(u) = {\textbf{I}}(\boldsymbol{u} > 0) - {\textbf{ I}}(\boldsymbol{u} < 0)$ is the sign function. Moreover, $\bf 0_n$ and $\bf 1_n$ respectively represent $n$-dimensional vector whose elements are all 0 and 1, $\bm I_n$ stands for $n$ identity matrix, and $\bm F$ is a $(p-1) \times p$ matrix with all elements being 0, except for 1 on the diagonal and -1 on the superdiagonal.

\section{Preliminaries and literature review}\label{sec2}
 Consider a prediction modeling problem with $n$ observations of the form 
\begin{equation}\label{data}
\{ {y_i},{\boldsymbol{x}_i}\} _i^n = \{ {y_i},{x_{i,1}},{x_{i,2}}, \cdots ,{x_{ip}}\} _i^n =\{\boldsymbol y, \boldsymbol{X} \} =\boldsymbol D
\end{equation}
which is supposed to be a random sample from an unknown joint (population) with probability density $f(\boldsymbol{x},{y})$.
The random variable ${y}$ is the ``response" or ``outcome", and $\boldsymbol{x}=\{{x}_1, {x}_2, \dots, {x}_p  \}$ are the predictor variables (features). These features may be the original observations and/or given functions constructed from them.  For convenience, we write $n$ observations of $\bm x$ into an $n \times p$ numerical matrix $\boldsymbol{X}$, which is often referred to as the design matrix in statistical modeling. Similarly, $n$  observations on variable $y$ are written into $n$-dimensional numerical vector $\boldsymbol y$. Without loss of generality, ${y}_i$ is quantitative for regression models, but equals to -1 or 1 for classification models. 

The purpose of classification or  regression  is to estimate the joint values for the parameters (coefficients) $\beta_0$ and $\boldsymbol{\beta}=(\beta_1, \beta_2, \dots, \beta_p)^\top$ of the linear model
\begin{equation}\label{linear model}
F(\boldsymbol{x}; \beta_0, \boldsymbol{\beta}) : = \sum\limits_{j = 1}^p \beta_j x_j+\beta_0
\end{equation}
for predicting $ y$ given $\boldsymbol x$, which minimize the expected loss (``risk")
\begin{equation}\label{risk}
\mathcal{R}(\beta_0,\boldsymbol{\beta}) = E_{\boldsymbol{x},{y}}\mathcal{L}({y},F(\boldsymbol{x};\beta_0,\boldsymbol{\beta}))
\end{equation}
over joint density function $f(\boldsymbol{x},{y})$. Here, $\mathcal{L}({y},F(\boldsymbol{x};\beta_0,\boldsymbol{\beta}))$ measures the cost of the discrepancy between the prediction $F(\boldsymbol{x};\beta_0,\boldsymbol{\beta})$ and the observation ${y}$ at the point $\boldsymbol{x}$.

For a specific loss $\mathcal{L}$, the optimal coefficients can be obtained from (\ref{risk}),
\begin{equation}\label{I1}
(\beta_0^*,\boldsymbol{\beta}^*) \in \mathop {\arg \min }\limits_{\beta_0,\boldsymbol \beta} \mathcal{R}(\beta_0,\boldsymbol{\beta}).
\end{equation}
Since the true joint density function $f(\boldsymbol{x},{y})$ is unknown, a popular practice is to replace an empirical estimate of the expected loss in (\ref{risk}) based on the collected data $\boldsymbol D$, yielding
\vspace{-0.5em}
\begin{equation}\label{I2}
(\hat \beta_0,\hat{\boldsymbol{\beta}}) \in \mathop {\arg \min }\limits_{\beta_0,\boldsymbol \beta} {\hat{\mathcal{R}}}(\beta_0,\boldsymbol{\beta}) = \mathop {\arg \min }\limits_{\beta_0,\boldsymbol \beta}\frac{1}{n}\sum\limits_{i = 1}^n \mathcal{L}(y_i, \sum\limits_{j = 1}^p x_{ij}\beta_j+ \beta_0)
\vspace{-0.5em}
\end{equation}
as an estimator for $\beta_0^*$ and $\boldsymbol{\beta}^ {*}$ in (\ref{I1}).
In this paper, the variables are assumed to be sparse and blocky in the sense that the coefficients come in blocks and have only a few change points. To be precise, we can partition $\{1,2,...,p\}$ into $J$ groups such that $\{L_1,...,U_J\}$ form a partition and $L_1=1$, $U_J=p$, $U_j\geq L_j$, $L_{j+1}=U_j+1$, and define the stepwise function as
$\boldsymbol{\beta}^*=\sum_{j=1}^J\beta_{L_j}^*\boldsymbol{1}_{\{L_j,...,U_j\}}$,
where $\boldsymbol{1}_{\{L_j,...,U_j\}}$ is a $p$-dimensional vector with ones only at positions corresponding to $\{L_j,...,U_j\}$ and zeros elsewhere. Besides, we assume that the vector $(\beta_{L_1}^*,...,\beta_{L_J}^*)^\top$ is \textit{sparse}, meaning that only a few elements of $\boldsymbol{\beta^*}$ are nonzero.

In order to accurately estimate these sparse and blocky coefficients, empirical risk with  (abbreviated as SFL) is a reliable model, that is, 
\vspace{-0.5em}
\begin{equation}\label{RFL}
 \mathop {\arg \min }\limits_{\beta_0,\boldsymbol \beta} \frac{1}{n}\sum\limits_{i = 1}^n \mathcal{L}(y_i, \sum\limits_{j = 1}^p x_{ij}\beta_j+ \beta_0) + {\lambda _1}\sum\limits_{j = 1}^p {\left| {{{\beta} _j}} \right|}  + {\lambda _2}\sum\limits_{j = 2}^p {\left| {{{\beta} _j} - {{\beta} _{j - 1}}} \right|}.
\vspace{-0.5em}
\end{equation}
The first regularization term (Lasso \cite{T}) with parameter $\lambda_1 \ge 0$ encourages the sparsity of the coefficients, while the second regularization term (total variation \cite{RO}) with parameter $\lambda_2 \ge 0$ shrinks the differences between neighboring coefficients towards zero. This means that it can achieve simultaneous sparseness and blockiness of coefficients. In this paper, we are all concerned with regression and classification problems. Let us begin with the following basic definition in \cite{IA2008}, which introduces the forms of loss functions used in above two problems.
\vspace{-1em}
\begin{defn}\label{def1}
We say that a supervised loss is 
\vspace{-1em}
\item 1. \textit{distance-based} if there exists a \textit{representing function} $\psi : \mathbb{R} \rightarrow [0, \infty )$ satisfying $\psi(0)=0$ and  
\vspace{-0.5em}
\begin{equation}\label{ls}
\mathcal{L}(y,F)= \psi(y-F) \ \  y \in \mathbb{R}; 
\end{equation}
\vspace{-3em}
\item 2. \textit{margin-based} if there exists a \textit{representing function}  $\psi : \mathbb{R} \rightarrow [0, \infty )$ such that
\vspace{-0.5em}
\begin{equation}\label{ls}
\mathcal{L}(y,F)= \psi(yF), \ y \in \{-1,1\};
\end{equation}
\vspace{-3em}
\item 3.  symmetric if $\mathcal{L}$ is distance-based and its representing function $\psi$ satisfies 
\vspace{-0.5em}
\begin{equation}\label{ls}
\psi(r)= \psi(-r), \ r \in \mathbb{R}.
\end{equation}
\end{defn}
\vspace{-1em}
Both distance-based and margin-based losses have good properties, one can refer to Lemma 2.25 and 2.33 in \cite{IA2008}. From the above definition, we can easily see that distance-based losses are used for regression and margin-based losses for classification. Throughout the paper, $F$ represents $F(\boldsymbol x; \boldsymbol \beta)$ in (\ref{linear model}).
\vspace{-1em}
\subsection{Distance-based losses for regression}\label{sec21}
\vspace{-0.5em}
In linear regression, the problem is to predict a real-valued output ${{y}}$ given an input $\boldsymbol{x}$. The discrepancy between the prediction $F$ and the observation ${{y}}$ is always measured by the distance-based losses. Many distance-based losses have been used in (\ref{I1}) to implement regression. For historical and computational reasons, the most popular of which is least squares (LS) loss 
\begin{equation}\label{ls}
\mathcal{L}_{LS}(y,F)= \psi(y-F)= ({y}-F)^2, \ \  y \in \mathbb{R}. 
\end{equation}
It is well-known that the corresponding $F$ is the conditional mean of $\boldsymbol y$ given $\boldsymbol x$.

However, from a theoretical and practical point of view, there are many situations in which a different loss criterion is more appropriate. For example, when heavy-tailed errors or outliers exist in response $ y$, least absolute deviation (LAD) loss is more applicable than LS loss. LAD loss defined by
\vspace{-0.5em}
\begin{equation}\label{lad}
\mathcal{L}_{LAD}(y,F)=|{y}-F|, \ \ y \in \mathbb{R},
\end{equation}
is the first reasonable surrogate for LS.
Here, $F$ is actually not interested in conditional mean but in the conditional median instead. Koencker \cite{K2005} first expanded LAD loss and proposed quantile loss,
\begin{equation}\label{quantile}
{\rho _\tau }(y-F) = (y-F)\{ \tau  - \textbf{I}(y-F < 0)\},\ \ \tau \in [0,1] \ \text{and} \ y \in \mathbb{R}.
\end{equation}
It is obvious that here $F$ focuses on conditional quantile. Quantile regression has been widely used in many fields, such as econometrics \cite{K2005}, machine learning and statistics \cite{IA2008}. Besides, there are many other distance-based losses (see Section 2.4 in \cite{IA2008}) that can be used for regression. The LS, LAD, and square root (SR) losses, plus sparse fused Lasso in (\ref{RFL}), were proposed by \cite{TSRZ}, \cite{LTZ}, and \cite{JLD}, respectively, referred to as SFL-LS, SFL-LAD, and SFL-SR. Recently, Wu et al. \cite{W2023} proposed sparse fused Lasso quantile regression (SFL-QRE) by combining quantile loss and sparse fused Lasso in (\ref{RFL}).
\vspace{-1em}
\subsection{Margin-based losses for classification}\label{sec22}
\vspace{-0.5em}
In linear classification, the problem is to predict label output ${{y}}$ given an input $\boldsymbol{x}$. The discrepancy of signs between the prediction $\text{sign}(F)$ and the label observation ${{y}}$ is often measured by classification loss, which is defined as follows
\begin{equation}\label{class}
\mathcal{L}_{class}(y,F)= \textbf{I}_{ (-\infty,0] }(y  \text{sign} \ F), \ \ y \in \{-1,1\}.
\end{equation} 
Note that classification loss only penalizes predictions $F$ whose signs disagree with that of $y$, so it indeed reflects classification goal. However, classification loss is non-convex, and as a consequence solving its empirical risk is often NP-hard.

To resolve the issue, many alternative margin-based convex losses have been proposed to achieve classification task, such as hinge, squared hinge, LS, $q$-norm loss ($q > 2$). The most famous one among these surrogate losses is hinge loss
\begin{equation}\label{hinge}
\mathcal{L}_{hinge}(y,F)=\max(1-yF,0), \ \ y \in \{-1, 1\}.
\end{equation}
As showed by Theorem 2.31 (\textit{Zhang's inequality}) of \cite{IA2008}, hinge loss used in (\ref{I1}) for classification is a good surrogate because the excess risk of hinge loss is the upper bound of that of classification loss.
In other words, we can achieve accurate classification whenever the excess risk of hinge loss is small. The hinge loss with sparse fused Lasso in (\ref{RFL}), named SFL-SVM, was first proposed in \cite{TSRZ}.

However, hinge loss is related to the shortest distance between dataset $\boldsymbol X$ and the corresponding \textit{separating hyperplane} $F$, and thereby is sensitive to noise and unstable for resampling. For a specific explanation of this assertion, the reader can refer to \cite{HSS}. To solve this issue, the authors in \cite{HSS} suggested using the pinball loss instead of hinge loss to achieve classification. Margin-based pinball loss is defined as follows
\begin{align}\label{pinball}
{\mathcal{L}_{\tau}}(1 - yF) = \left\{ \begin{array}{l}
1 - yF, \ \ \ \ \ 1 - yF  \ge 0, \\
\tau (yF-1), \ 1 - yF  < 0.
\end{array} \right.
\end{align}
Note that the pinball loss reduces to hinge loss provided that $\tau=0$. The above pinball loss is related to quantile distance which is insensitive to the noise around the separating hyperplane, and hence robust to resampling. 
Furthermore, all margin-based losses mentioned in this section for classification have been proved to have an upper bound that can control the classification risk similar to \textit{Zhang's inequality}; see \cite{HSS} and Section 3.4 in \cite{IA2008}. In this paper, we will combine pinball loss and sparse fused Lasso together in (\ref{RFL}) to propose a new SVM called SFL-PSVM.
\vspace{-1em}
\subsection{Existing algorithms for ultrahigh dimensional  fused Lasso problems}\label{sec22}
\vspace{-0.5em}
The sparse fused lasso regularization (SFL)
\begin{align}\label{SFL}
{\lambda _1}\sum\limits_{j = 1}^p {\left| {{{\beta} _j}} \right|}  + {\lambda _2}\sum\limits_{j = 2}^p {\left| {{{\beta} _j} - {{\beta} _{j - 1}}} \right|} = {\lambda _1}\|\bm \beta \|_1 +  \lambda _2\|\bm F \bm \beta \|_1,
\end{align}
poses significant challenges in algorithm design due to its nonsmooth, nondifferentiable, and nonseparable properties, particularly when dealing with ultrahigh dimensional design matrices.
In this paper, we focus on the piecewise linear loss in (\ref{quantile}) and (\ref{pinball}) with SFL to complete the modeling task. Thus, we review two related  ADMMs for solving the piecewise linear loss with SFL, one is the multi-block ADMM algorithm proposed by Ye and Xie  \cite{YX} to solve SFL-SVM, and the other is the multi-block ADMM algorithm proposed by Wu et al. \cite{W2023} to solve SFL-QRE.  These two-multi block ADMMs both introduced the two slack variables $\bm a= \bm \beta$ and $\bm b=\bm{F\beta}$ to ensure that each subproblem of the multi-block ADMM algorithm has a closed-form solution.
To be more specific, these two slack variables can transform the SFL in  (\ref{SFL}) into $\lambda_1 \|\bm a\|_1+ \lambda_2 \|\bm b\|_1$, then terms $\lambda_1 \|\bm a\|_1$ and $\lambda_2 \|\bm b\|_1$ are completely separable. 
This allows for the implementation of the ADMM algorithm by alternately minimizing $\bm a$ and $\bm b$.  \textcolor{red}{However, the use of slack variables poses two issues: the need to calculate the inverse of a  ultrahigh dimensional  matrix ($p \times p$)  and the introduction of excessive slack variables ($3p+n$) in the ADMM algorithms. To address these two issues,  this paper proposes the multi-block linearized ADMM algorithm to uniformly solve  SFL classification and regression problems.
In our algorithm, first, we replace the inverse of an ultrahigh-dimensional matrix by calculating its maximum eigenvalue. Second, we eliminate the need to introduce $\bm a = \bm \beta$, reducing the number of subproblems to solve to $2p+n$ in each iteration. These two points not only reduce the computational burden, but also make the algorithm converge faster.}

On the other hand, for the study of SFL, there are many models and algorithms that consider when $\bm X$ is the identity matrix. Interested readers can refer to \cite{CDS,SSP}. Because the primary focus of this paper is on modeling high-dimensional data, we will briefly skip over this aspect.
\section{Multi-block Linearized ADMM Algorithm}\label{sec3}
In this section, we introduce a unified form of SFL regression and classification, and provide a multi-block linearized ADMM algorithm to solve it.
\vspace{-0.75em}
\subsection{Unified Form of Regression and Classification}\label{sec31}
\vspace{-0.75em}
For the collected data $\boldsymbol D=\{ {y_i},{x_{i,1}},{x_{i,2}}, \cdots ,{x_{ip}}\} _i^n $, sparse fused Lasso quantile regression (SFL-QRE) in \cite{W2023} is defined as follows
\vspace{-0.5em}
\begin{align}\label{qflasso}
(\hat \beta_{0r},\hat{\boldsymbol{\beta}}_r) \in \mathop {\arg \min }\limits_{(\beta_0,\boldsymbol{\beta}) \in \mathbb{R}^{p+1}} \left\{\hat{\mathcal{R}}_r(\beta_0,\boldsymbol{\beta} )   +  {\lambda _1}\sum\limits_{j = 1}^p {\left| {{{\beta} _j}} \right|}  + {\lambda _2}\sum\limits_{j = 2}^p {\left| {{{\beta} _j} - {{\beta} _{j - 1}}} \right|}\right \},
\vspace{-0.5em}
\end{align}
where $\hat{\mathcal{R}}_r(\beta_0,\boldsymbol{\beta} )=\frac{1}{n}\sum_{i = 1}^n {{\rho _\tau }} ({{y}_i} - \sum_{j = 1}^p  x_{i,{j}}\beta_j-\beta_0)$. Formula (\ref{qflasso}) is composed of distance-based quantile empirical risk and SFL penalty term. 

Similarly, classification model considered in this paper is defined as 
\vspace{-0.3em}
\begin{align}\label{qfsvm}
(\hat \beta_{0c},\hat{\boldsymbol{\beta}}_c) \in \mathop {\arg \min }\limits_{(\beta_0,\boldsymbol{\beta}) \in \mathbb{R}^{p+1}} \left\{\hat{\mathcal{R}}_c(\beta_0,\boldsymbol{\beta} )   +  {\lambda _1}\sum\limits_{j = 1}^p {\left| {{{\beta} _j}} \right|}  + {\lambda _2}\sum\limits_{j = 2}^p {\left| {{{\beta} _j} - {{\beta} _{j - 1}}} \right|}\right \},
\vspace{-0.75em}
\end{align}
where $\hat{\mathcal{R}}_c(\beta_0,\boldsymbol{\beta} )=\frac{1}{n}\sum_{i = 1}^n {{\mathcal{L}_\tau }} (1 - \sum_{j = 1}^p y_i x_{ij}\beta_j-y_i\beta_0)$.
Let us now relate the optimization problem (\ref{qfsvm}) to the SFL-SVM formulation in  \cite{TSRZ}. To this end, we need to introduce slack variables $\{\xi_i\}_{i=1}^n \ge 0$ which satisfies
\vspace{-0.75em}
\begin{equation}
\xi_i \ge 1- \sum_{j=1}^p y_i x_{ij}\beta_j - y_i\beta_0 \ \text{and} \ \xi_i \ge \tau(\sum_{j=1}^p y_ix_{ij}\beta_j + y_i\beta_0-1).
\vspace{-0.5em}
\end{equation}
Then, we can see that the slack variables must satisfy
\vspace{-0.6em}
\begin{equation}\label{slacksvm}
\xi_i \ge \max \left\{  1- \sum_{j=1}^p y_i x_{ij}\beta_j - y_i\beta_0, \tau(\sum_{j=1}^p y_i x_{ij}\beta_j + y_i\beta_0-1)\right\}=\mathcal{L}_\tau(1 - \sum_{j = 1}^p y_ix_{ij}\beta_j - y_i\beta_0),
\vspace{-0.6em}
\end{equation}
where $\mathcal{L}_\tau$ is the pinball loss introduced earlier. Obviously, $\sum_{i=1}^{n}\xi_i$ becomes minimal if the inequalities in (\ref{slacksvm}) are actually equalities. Then, the optimization problem (\ref{qfsvm}) can be rewritten as
\begin{equation}\label{pinsvm2}
\begin{array}{l}
\mathop {\min }\limits_{(\beta_0,\boldsymbol{\beta})\in \mathbb{R}^{p+1},{\xi} \ge \boldsymbol 0}  \frac{1}{n}\sum\limits_{i = 1}^n {{{\xi} _i}} +  {\lambda _1}\sum\limits_{j = 1}^p {\left| {{{\beta} _j}} \right|}  + {\lambda _2}\sum\limits_{j = 2}^p {\left| {{{\beta} _j} - {{\beta} _{j - 1}}} \right|} \\
\text{s.t.}\;\; \sum\limits_{j = 1}^p y_ix_{ij}\beta_j + y_i\beta_0  \ge 1 - {{\xi} _i},\quad i = 1,2,\cdots n,\\
\ \ \ \ \ \  \sum\limits_{j = 1}^p y_ix_{ij}\beta_j + y_i\beta_0  \le 1 + {1 \over \tau}{{\xi} _i}, \ i = 1,2,\cdots n.
\end{array}
\end{equation}
Note that (\ref{pinsvm2}) reduces to SFL-SVM in  \cite{TSRZ} with $\tau=0$. Thus, we call it  sparse fused Lasso pinball SVM  (abbreviated as SFL-PSVM).

Before introducing the unified form, we need to introduce the equivalence relationship between $\hat{\mathcal{R}}_r(\beta_0,\boldsymbol{\beta} ) $ and $\hat{\mathcal{R}}_c(\beta_0,\boldsymbol{\beta} )$.
Let $\bar{\bm X}_r = \left[ \bm 1_n, \bm X \right]$, and $\bar {\bm{X}}_c = \bm Y \bar {\bm X}_r$ where $\bm Y$ is a diagonal matrix with its diagonal elements to be the vector $\bm y$.
Then,  $\hat{\mathcal{R}}_r(\beta_0,\boldsymbol{\beta} ) $ and $\hat{\mathcal{R}}_c(\beta_0,\boldsymbol{\beta} )$  can be rewritten in the following forms without intercept term
\vspace{-0.75em}
\begin{align}
\hat{\mathcal{R}}_r(\beta_0,\boldsymbol{\beta} )=\frac{1}{n}\sum_{i = 1}^n {{\rho _\tau }} ({{y}_i} - \sum_{j = 0}^p \bar x^{'}_{i,{j+1}}\beta_j) \quad \text{and} \quad
\hat{\mathcal{R}}_c(\beta_0,\boldsymbol{\beta} )=\frac{1}{n}\sum_{i = 1}^n {{\mathcal{L}_\tau }} (1 - \sum_{j = 0}^p \bar x^{''}_{i,{j+1}}\beta_j),
\vspace{-0.85em}
\end{align}
where $\bar x^{'}_{i,{j+1}}$ and  $\bar x^{''}_{i,{j+1}}$ correspond to the  entry $\left(i,j+1\right)$  of the matrices $\bar{\bm X}_r$ and $\bar{\bm X}_c$  respectively.

It is clear that quantile loss and pinball loss can be transformed into each other through simple transformation, that is 
\vspace{-0.2em}
\begin{equation}\label{22}
\mathcal{L}_{\tau}(u) =u\{1 - (\tau+1){\textbf{I}} (u<0) \}=(\tau+1)\rho_{\tilde \tau}(u),
\vspace{-0.2em}
\end{equation} 
where $\tilde{\tau}=1/(1+\tau)$. Then,  
\vspace{-0.5em}
\begin{equation}
\hat{\mathcal{R}}_c(\beta_0,\boldsymbol{\beta} ) = \frac{1}{n}\sum_{i = 1}^n {{\mathcal{L}_\tau }} (1 - \sum_{j = 0}^p \bar x^{''}_{i,{j+1}}\beta_j)=\frac{\tau+1}{n}\sum\limits_{i = 1}^n {{\rho_{\tilde \tau} }} (1 - \sum_{j = 0}^p \bar x^{''}_{i,{j+1}}\beta_j).
\vspace{-0.5em}
\end{equation}
Take $ \tilde{y}_i = 1$, $\tilde{{x}} _{i,{j+1}} = {\bar{x}^{''}_{i,{j+1}}}$, $\tilde{\lambda}_1= \lambda_1/(\tau+1)$ and $\tilde{\lambda}_2= \lambda_2/(\tau+1)$, then formula (\ref{qfsvm}) can be rewritten as
\begin{align}\label{qqff}
(\hat \beta_{0c},\hat{\boldsymbol{\beta}}_c) \in \mathop {\arg \min }\limits_{(\beta_0,\boldsymbol{\beta}) \in \mathbb{R}^{p+1}} \left\{\hat{\mathcal{R}}_r(\beta_0,\boldsymbol{\beta} )   +  {\tilde{\lambda} _1}\sum\limits_{j = 1}^p {\left| {{{\beta} _j}} \right|}  + {\tilde{\lambda} _2}\sum\limits_{j = 2}^p {\left| {{{\beta} _j} - {{\beta} _{j - 1}}} \right|}\right \},
\end{align}
where $\hat{\mathcal{R}}_r(\beta_0,\boldsymbol{\beta}) = \frac{1}{n}\sum_{i = 1}^n {\rho _{\tilde{\tau} }} ({\tilde{y}_i} - \sum_{j = 0}^p \tilde{x}_{i,{j+1}}\beta_j  )$. Before solving $\hat \beta_{0c}$ and $\hat{\boldsymbol{\beta}}_c$, the values of ${\tilde{y}_i}$ and $\tilde{{x}} _{i,{j+1}}$ have been determined as they are input values.
Therefore, the  regression in (\ref{qflasso}) and the classification in (\ref{qfsvm}) have a unified optimization form in (\ref{qqff}). 
\textcolor{red}{Note that both quantile loss and pinball loss are convex, and SFL is also convex. Therefore, this unified optimization form is convex.}
Note also that if SFL regularization term in (\ref{qfsvm}) is replaced by other regularization terms, the above conclusion still holds. To sum up, we get the following proposition.
\begin{prop}\label{prop1}
For all $\hat \beta_{0c}$ and $\hat{\boldsymbol{\beta}}_c \in \mathop {\arg \min }\limits_{(\beta_0,\boldsymbol{\beta}) \in  \mathbb{R}^{p+1}} \left\{ \hat{\mathcal{R}}_c(\beta_0,\boldsymbol{\beta} )+ \text{regularization} \right \}$ with  $\hat{\mathcal{R}}_c(\beta_0,\boldsymbol{\beta} ) =\frac{1}{n}\sum\limits_{i = 1}^n {{\mathcal{L}_\tau }} (1 - \sum\limits_{j = 0}^p \bar x^{''}_{i,{j+1}}\beta_j)$, one can get $\hat \beta_{0c}$ and $\hat{\boldsymbol{\beta}}_c $  by solving \textcolor{red}{the  following convex optimization form}, $$\mathop {\min }\limits_{(\beta_0,\boldsymbol{\beta}) \in  \mathbb{R}^{p+1}} \left\{ \hat{\mathcal{R}}_r(\beta_0,\boldsymbol{\beta} )+ \tilde{\tau} \times \text{regularization} \right \}$$ with $\hat{\mathcal{R}}_r(\beta_0,\boldsymbol{\beta} ) =  \frac{1}{n}\sum\limits_{i = 1}^n {\rho _{\tilde{\tau} }} ({\tilde{y}_i} - \sum\limits_{j = 0}^p \tilde{x}_{i,{j+1}}\beta_j )$, where  $\tilde{\tau}=1/(1+\tau)$, $ \tilde{y}_i = 1$, and $\tilde{{x}} _{i,{j+1}} = {\bar{x}^{''}_{i,{j+1}}}$.
\end{prop}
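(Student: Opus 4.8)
The plan is to prove the proposition by a direct algebraic reduction, exploiting the pointwise identity (\ref{22}) together with the elementary fact that rescaling an objective by a strictly positive constant leaves its set of minimizers unchanged. Since the regularization term is a functional of $(\beta_0,\boldsymbol{\beta})$ alone and does not involve the loss, the only substantive content is the passage between the two empirical risks $\hat{\mathcal{R}}_c$ and $\hat{\mathcal{R}}_r$.

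First I would apply the identity $\mathcal{L}_\tau(u)=(\tau+1)\rho_{\tilde\tau}(u)$ from (\ref{22}) termwise to each summand defining $\hat{\mathcal{R}}_c$, which yields
\begin{equation}
\hat{\mathcal{R}}_c(\beta_0,\boldsymbol{\beta})=\frac{\tau+1}{n}\sum_{i=1}^n \rho_{\tilde\tau}\Big(1-\sum_{j=0}^p \bar x^{''}_{i,{j+1}}\beta_j\Big).
\end{equation}
Next, substituting $\tilde y_i=1$ and $\tilde x_{i,{j+1}}=\bar x^{''}_{i,{j+1}}$ lets me recognize the sum on the right as exactly $n\,\hat{\mathcal{R}}_r(\beta_0,\boldsymbol{\beta})$, so that $\hat{\mathcal{R}}_c=(\tau+1)\hat{\mathcal{R}}_r$ identically in $(\beta_0,\boldsymbol{\beta})$. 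Adding the penalty to both sides and factoring out the constant $\tau+1$ then gives
\begin{equation}
\hat{\mathcal{R}}_c(\beta_0,\boldsymbol{\beta})+\text{regularization}=(\tau+1)\Big[\hat{\mathcal{R}}_r(\beta_0,\boldsymbol{\beta})+\tfrac{1}{\tau+1}\,\text{regularization}\Big],
\end{equation}
and since $\tilde\tau=1/(\tau+1)$ the bracketed expression is precisely $\hat{\mathcal{R}}_r+\tilde\tau\times\text{regularization}$.

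To finish, I would invoke $\tau\in[0,1]$, so $\tau+1\ge 1>0$, and use that multiplying an objective by a strictly positive scalar does not alter its argmin set; hence the minimizers of the classification problem coincide exactly with those of the regression problem carrying the rescaled penalty, which is the assertion. The argument costs nothing beyond verifying (\ref{22}) by the two cases $u\ge 0$ and $u<0$, so there is no genuine obstacle. The only points requiring care are that the equivalence is claimed for the entire solution set rather than for a unique minimizer (this is handled automatically by the positive-rescaling step, which is exactly why no convexity or uniqueness hypothesis is needed), and that nothing in the reduction uses the specific form of the penalty, so the stated generality to Lasso, SCAD, elastic net, group and sparse group Lasso, and MCP follows immediately.
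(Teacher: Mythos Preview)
Your proposal is correct and follows essentially the same route as the paper: the argument preceding the proposition in Section~\ref{sec31} applies the identity (\ref{22}) termwise to obtain $\hat{\mathcal{R}}_c=(\tau+1)\hat{\mathcal{R}}_r$, absorbs the constant $\tau+1$ into the penalty, and observes that the specific form of the regularizer is irrelevant. Your write-up is if anything slightly more explicit about the invariance of the argmin set under positive scaling, but the content is the same.
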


\textcolor{red}{Proposition \ref{prop1} theoretically indicates that the algorithms of various pinball-SVM classification models and quantile regression models can be shared. Let us close this subsection with one \textcolor{red}{practical}  application of Proposition \ref{prop1},  where certain pinball-SVM classification models can be solved using existing quantile regression algorithms.}
Recently, some machine learning papers have focused on $\hat{\mathcal{R}}_c(\beta_0,\boldsymbol{\beta} )$ with regularization terms,
such as $\ell_2$ in \cite{HSS}, $\ell_1$ in \cite{TSR} and elastic net in \cite{WY2021}. Some algorithms have been proposed to solve the above three pinball-SVMs. For example, the authors in \cite{HSS,HSS2} used quadratic programming and sequential minimal optimization (SMO) algorithms to solve $\ell_2$ pinball-SVM. In \cite{TSR}, the authors utilized linear programming to solve $\ell_1$ pin-SVM. The authors in \cite{WY2021} proposed feature and label-pair elimination rule (FLER) to solve elastic net pin-SVM. However, these algorithms are not suitable for the case where $p$ is particularly large. Fortunately, statistics has done a lot of researches on  coordinate descent algorithms \cite{PW,YH} and ADMM  algorithms \cite{WLY,GFK}  for solving high-dimensional quantile regression. Among them, the most famous one is ADMM algorithms for solving $\ell_1$ and elastic net quantile regression in \cite{GFK}.
Proposition \ref{prop1} points out that the above three pinball SVMs can be effectively solved by the elastic net quantile regression algorithms because both $\ell_1$ and $\ell_2$ are special cases of elastic net. Besides, the authors in \cite{GFK} provided  \textbf{R} package called \textbf{FHDQR} to implement their algorithms.

\subsection{Linearized ADMM Algorithm for Unified Form}\label{sec32}
Proposition \ref{prop1} has shown that SFL-QRE and SFL-PSVM have a unified optimization objective function,
\vspace{-0.5em}
\begin{equation}\label{unified}
\mathscr{L}(\boldsymbol\beta ) = \frac{1}{n}\sum\limits_{i = 1}^n {{\rho _\tau }} ({\bar{y}_i} - \sum\limits_{j = 0}^p  \bar x_{i,{j+1}}\beta_j) + {\lambda _1}\sum\limits_{j = 1}^p {\left| {{{\beta} _j}} \right|} + {\lambda _2}\sum\limits_{j = 2}^p {\left| {{{\beta} _j} - {{\beta} _{j - 1}}} \right|}.
\vspace{-0.5em}
\end{equation}
The observation values of $\bar y_i$ and $\bar x_{i,{j+1}}$ can be determined by some simple transformations of  original observation dataset $\bm D$, and let ${\left\{\bar y_i,\bar x_{i,{j}} \right\}_{i=1}^{n}}_{j=0}^{p}=\left\{\bm {\bar y}, \bm{\bar X} \right\}$. For regression, $\bm {\bar y}= \bm y$ and $\bm {\bar X}= \bm {\bar X}_r$;  but for classification, $\bm {\bar y}= \bm 1_n$ and $\bm {\bar X}= \bm {\bar X}_c$.
When $p$ is small and medium scale, linear programming (LP) has been proved to be an efficient method to solve SFL classification and regression by many literatures; see \cite{WLY,TSRZ}. Ye and Xie \cite{YX} proposed a multi-block ADMM to solve ultrahigh dimensional SFL-SVM.  More recently, Wu et al. \cite{W2023} proposed using the multi-block ADMM to solve ultrahigh dimensional SFL-QRE. However, their ADMM algorithms  needs to update $n+3p-1$ primal variables require computing the inverse of a $p \times p$ matrix in the $\bm \beta$ iteration, which is a significant computational burden when $p$ is ultrahigh  dimensional. In this section, we are dedicated to developing a MLADMM algorithm to solve unified optimization form when the dataset is ultrahigh dimensional. The MLADMM algorithm not only requires updating fewer primal variables, but also avoids calculating the inverse of a large-scale matrix by adding linearization terms. This linearization significantly reduced the complexity of the ADMM algorithm for solving SFL problems  and improves computational speed in \cite{LMY}.

Next, we  develop the MLADMM algorithm for solving the unified form.  Precisely speaking, $\bar{\bm y}$ is $\bm y$ and $\bar {\bm X}$ is $[\bm 1_n,\bm X]$ for regression; while $\bar{\bm y}$ is $\bm 1_n$ and $\bar{\bm X}$ is $[\bm y, \bm {Y X}]$ for classification. For convenience,  we just take $\bar {\bm X}= [\bm \gamma, \bm X]$. 
Then, two  additional auxiliary variables are defined as $ {\bm X}\bm \beta + \bm \gamma \beta_0 + \boldsymbol{r} =\bar{\bm y}$ and $\boldsymbol{F\beta=b}$, where $\boldsymbol F$ is a $(p-1) \times p$ matrix with $0$ everywhere except $1$ in the diagonal and $-1$ in the superdiagonal. Adding these additional auxiliary variables, the optimal solution of (\ref{unified}) can be reformulated into
\begin{equation}\label{qflasso_con1}
\begin{array}{l}
\mathop {\min }\limits_{\beta_0, \boldsymbol {\beta},\bm b, \bm r} \left\{ \frac{1}{n}\sum\limits_{i = 1}^n {\rho_\tau({\boldsymbol{r}_i})}  + {\lambda _1}{\left\| \boldsymbol \beta \right\|_1} + {\lambda _2}{\left\| \boldsymbol b \right\|_1} \right\}\\
\text{s.t.} \  {\bm X}\bm \beta + \bm \gamma \beta_0 + \boldsymbol{r} =\bar{\bm y}, \boldsymbol{F\beta}=\bm{b}.
\end{array}
\end{equation}
The augmented Lagrangian of (\ref{qflasso_con1}) is
\begin{align}\label{auadmm}
&\tilde{ L}(\beta_0,\bm \beta, \bm b, \bm r, \bm u,\bm v) = \frac{1}{n}\sum\limits_{i = 1}^n {\rho_\tau({\boldsymbol{r}_i})} + {\lambda _1}{\left\| \boldsymbol \beta \right\|_1} + {\lambda _2}{\left\| \boldsymbol b \right\|_1} -  \boldsymbol{u}^\top({\bm X}\bm \beta + \bm \gamma \beta_0 + \boldsymbol{r}-\bar{\bm y})  \notag \\
&
 -  \bm{v}^\top (\bm {F\beta} -\bm  b)  + \frac{{{\mu _1}}}{2}\left\|  {\bm X}\bm \beta + \bm \gamma \beta_0 + \boldsymbol{r}-\bar{\bm y} \right\|_2^2 + \frac{{{\mu _2}}}{2}\left\| {\boldsymbol{F\beta}  - \bm{b}} \right\|_2^2 ,
\end{align}
where $\boldsymbol u \in \mathbb{R}^{n}$, $\boldsymbol v \in \mathbb{R}^{p-1}$ are the dual variables corresponding to the linear constraints ${\bm X}\bm \beta + \bm \gamma \beta_0 + \boldsymbol{r} =\bar{\bm y}$ and $\boldsymbol{F\beta=b}$, respectively. Here, the quadratic terms $\frac{{{\mu _1}}}{2}\left\|  {\bm X}\bm \beta + \bm \gamma \beta_0 + \boldsymbol{r}-\bar{\bm y} \right\|_2^2$ and $\frac{{{\mu _2}}}{2}\left\| {\boldsymbol{F\beta  - b}} \right\|_2^2$ penalize the violation of two linear
constraints; two given parameters $\mu_1$ and  $\mu_2$  are  always greater than $0$. 

We aim to find a saddle point $(\beta_0^*, \bm \beta^*, \bm b^*, \bm r^*, \bm u^*, \bm v^*)$ of the augmented Lagrangian function $\tilde{L}(\beta_0, \bm \beta,  \bm b, \bm r, \bm u, \bm v)$ satisfying the conditions:
\begin{equation}\label{saddle point}
\tilde{L}(\beta_0^*, \bm \beta^*, \bm b^*, \bm r^*, \bm u, \bm v) \le \tilde{L}( \beta_0^*, \bm \beta^*,\bm b^*, \bm r^*, \bm u^*, \bm v^*) \le \tilde{L}(\beta_0, \bm \beta, \bm b, \bm r, \bm u^*, \bm v^*)
\end{equation}
for all primal and dual variables $\beta_0$, $\bm \beta$, $\bm b$, $\bm r$, $\bm u$, and $\bm v$. It has been proven that $(\beta_0^*, \bm \beta^*)$ is an optimal solution of (\ref{qflasso_con1}) if and only if $(\beta_0^*, \bm \beta^*, \bm b^*, \bm r^*, \bm u^*, \bm v^*)$ can solve the above saddle point problem for some $\bm b^*$, $\bm r^*$, $\bm u^*$, and $\bm v^*$.
Hence, we can solve the saddle point problem using an iterative algorithm by alternating between primal and dual optimization steps, as follows:
\begin{equation}\label{itealgorithm}
\left\{ \begin{split}
\text{Primal}:\ &({\beta_0 ^{k + 1}},{\bm \beta^{k + 1}},{\bm b^{k + 1}},\bm r^{k+1}) = \mathop {\text{argmin}}\limits_{\beta_0,\bm \beta,\bm b,\bm r} \tilde{L}(\beta_0,\bm \beta, \bm b, \bm r,{\bm u^k},{\bm v^k}); \\
\text{Dual}:\ &{\bm u^{k + 1}} = \bm u^k - {\mu _1}( {\bm X}\bm \beta^{k+1} + \bm \gamma \beta_0^{k+1} + \bm r^{k+1}-\bar{\bm y}),\\
&\bm v^{k + 1} = \bm v^k - {\mu _2}(\bm F{\bm \beta ^{k + 1} - \bm b^{k + 1}}),
\end{split} \right.
\end{equation}
Here, the primal optimization step updates the four primal variables based on the current estimates of the two dual variables, while the dual optimization step updates the two dual variables based on the current estimates of the four primal variables.
Since the augmented Lagrangian function $\tilde{ L}$  is linear in $\boldsymbol{u}$ and $\boldsymbol{v}$, then the update of the dual variables is quite easy. Thus, the efficiency of the iterative algorithm (\ref{itealgorithm}) hinges entirely on the speed at which the primal problem can be solved. The faster we can obtain the optimal values for $\beta_0$, $\bm \beta$, $\bm b$, and $\bm r$ in the primal step, the more efficient the algorithm will be.

The augmented Lagrangian function $\tilde{ L}$ in (\ref{auadmm}) still has nondifferentiable terms, however, different from the original objective function in (\ref{unified}), the $\ell_1$-norm in  (\ref{qflasso_con1}) no longer contains inseparable regularization term. As a result, we can solve the primal optimization problem by alternating the minimization of $\beta_0$, $\bm \beta$, $\bm b$ and $\bm r$.
After organizing the formula in $\tilde{L}$ and excluding some constant terms, the update of primal variables in the iterative process can be expressed as follows,
\begin{small}
\begin{equation}\label{primalupdate}
\left\{ \begin{array}{l}
(\beta_0^{k+1},\bm \beta^{k+1}) = \underset{\beta_0,\bm \beta}{\arg \min}  {\lambda _1}{\| \boldsymbol{\beta} \|_1} + \frac{{{\mu _1}}}{2}\| {\bm X}\bm \beta + \bm \gamma \beta_0^k + \boldsymbol{r}^k-\bar{\bm y}-\frac{\bm u^k}{\mu_1}\|_2^2 + \frac{{{\mu _2}}}{2}\| {\boldsymbol{F\beta} - {\boldsymbol{b}^k}}-\frac{\bm v^k}{\mu_2} \|_2^2 \\ %
{\boldsymbol{b}^{k + 1}} = \underset{\boldsymbol{b}}{\arg \min}  {\lambda _2}{\| \boldsymbol{b} \|_1}  + \frac{{{\mu _2}}}{2}\| {\boldsymbol{F\beta}^{k+1} - {\boldsymbol{b}}}-\frac{\bm v^k}{\mu_2} \|_2^2,  \\ %
{\boldsymbol{r}^{k + 1}} = \underset{\boldsymbol{r}}{\arg \min} \frac{1}{n}\sum\limits_{i = 1}^n {{\rho _\tau }({\boldsymbol{r}_i})}  + \frac{{{\mu _1}}}{2}\|  {\bm X}\bm \beta^{k+1} + \bm \gamma \beta_0^{k+1} + \boldsymbol{r}-\bar{\bm y}-\frac{\bm u^k}{\mu_1}\|_2^2. 
\end{array} \right.
\end{equation}
\end{small}
According to \cite{W2023,LMY}, prior to solving each subproblem in (\ref{primalupdate}), we introduce two operators that help us effectively express the closed-form solutions for $\beta_0,\bm \beta, \bm b$, and $\bm r$.

$\bullet$ The minimization problem 
$$\mathop {\arg \min }\limits_{\boldsymbol{z}} \lambda {\left\| \boldsymbol{z} \right\|_1} + \frac{\mu }{2}\left\| {\boldsymbol{z} - \boldsymbol{z}^0} \right\|_2^2$$
with $\mu > 0$ and a given vector $\boldsymbol{z}^0$, has a closed-form solution defined as
\begin{equation}\label{opera1}
\boldsymbol{z}_i^* = \text{sign}({\boldsymbol{z}^0_i}) \cdot \text{max}\{ 0,\left| {{\boldsymbol{z}^0_i}} \right| - {\lambda \mathord{\left/
 {\vphantom {1 \mu }} \right.
 \kern-\nulldelimiterspace} \mu }\},
\end{equation}
where $\boldsymbol{z}_i^*$ and $\boldsymbol{z}^0_i$ are the $i$-th components of vectors $\bm z^*$ and $\bm z^0$, respectively.

$\bullet$ The minimization problem 
$$\mathop {\arg \min }\limits_{\boldsymbol{z}} \frac{1}{n} \sum\limits_{i = 1}^n {{\rho _\tau }({\boldsymbol{z}_i})}  + \frac{\mu }{2}\left\| {\boldsymbol{z} - \boldsymbol{z}^0} \right\|_2^2$$
with $\mu > 0$ and a given vector $\boldsymbol{z}^0$, has a closed-form solution defined as
\begin{equation}\label{opera2}
\boldsymbol{z}_i^* = \max \{ {\boldsymbol{z}^0_i} - {\tau  \mathord{\left/
 {\vphantom {\tau  {(n}}} \right.
 \kern-\nulldelimiterspace} {(n}}\mu ),\min (0,{\boldsymbol{z}^0_i} + {{(1 - \tau )} \mathord{\left/
 {\vphantom {{(1 - \tau )} {(n\mu) }}} \right.
 \kern-\nulldelimiterspace} {(n\mu) }})\},
\end{equation}
where $\boldsymbol{z}_i^*$ and $\boldsymbol{z}^0_i$ are the $i$-th components of vectors $\bm z^*$ and $\bm z^0$, respectively.

Now, we discuss the update of each subproblem in detail. First, for the subproblem of updating $\beta_0$ and $\boldsymbol{\beta}$ in (\ref{primalupdate}), the objective function on minimizing $\bm \beta$  can be converted into a problem similar to Lasso.
More specifically,  take $\tilde \beta= (\beta_0, \bm \beta^\top)^\top$, ${{\tilde {\boldsymbol{y}}}^k} = {\left(\sqrt {{\mu _1}} {(\bar{\boldsymbol{y}} - {\boldsymbol{r}^k} + \boldsymbol{u}^k/{\mu_1})^\top},\sqrt {{\mu _2}} {({\boldsymbol{b}^k} + \boldsymbol{v}^k/{\mu_2})^\top}\right)^\top}$ and $\tilde {\boldsymbol{X}} = {(\sqrt {{\mu _1}} \bar{\boldsymbol{X}}^\top,\sqrt {{\mu _2}} \bar{\boldsymbol{F}}^\top)^\top}$, where $\bar{\bm F}=[\bm 0_{p-1},\bm F]$. Then we have
\begin{equation}\label{ladmm_beta}
{\tilde {\boldsymbol{\beta}} ^{k + 1}} = \mathop {\arg \min }\limits_{\tilde{\bm \beta}}  \lambda_1 {\| \boldsymbol{\beta} \|_1} + \frac{1}{2}\| {\tilde{\boldsymbol{X}}\tilde{\boldsymbol{\beta}}  - {{\tilde {\boldsymbol{y}}}^k}} \|_2^2.
\end{equation}
Obviously, formula (\ref{ladmm_beta}) does not have a closed-form solution due to the non-identity matrix $\tilde{\boldsymbol{X}}$.
The existing literatures generally have two methods to achieve the iteration of this step. One is to use the coordinate descent algorithm to solve it, such as scdADMM in \cite{GFK}. The other is to linearize the quadratic term in (\ref{ladmm_beta})  to get an approximate solution, such as pADMM in \cite{GFK} and LADMM in \cite{LMY}. Although coordinate descent algorithm can solve this Lasso subproblem quickly, it needs to complete an inner iteration for each iteration of ADMM. As a result, when $p$ is particularly large, this method may have a great computational consumption. 
Therefore, this paper focuses on the linearization method. 

Similar to  \cite{LMY,L2023}, linearize the quadratic term $\|{\tilde{\boldsymbol{X}}\tilde{\boldsymbol{\beta}}  - {{\tilde {\boldsymbol{y}}}^k}} \|_2^2/2$ and replace it by
$${\left[{\tilde{\boldsymbol{X}}^\top}(\tilde {\boldsymbol{X}}{\boldsymbol{\tilde \beta} ^k} - {\tilde{\boldsymbol{y}}^k})\right]^\top}(\boldsymbol{\tilde \beta}  - {\boldsymbol{\tilde \beta} ^k}) + {{(\eta } \mathord{\left/
 {\vphantom {{(\eta } 2}} \right.
 \kern-\nulldelimiterspace} 2})\| {\boldsymbol{\tilde \beta}  - {\boldsymbol{\tilde \beta} ^k}}\|_2^2,$$
where the linearized  parameter $\eta > 0$ controls the proximity to $\tilde{\boldsymbol{\beta}}^k$. To ensure the convergence of the algorithm, $\eta$ need to be larger than $\rho(\tilde{{\boldsymbol{X}}}^\top \tilde{\boldsymbol{X}})$, where $\rho(\tilde{{\boldsymbol{X}}}^\top \tilde{\boldsymbol{X}})$ represents the maximum eigenvalue of $\tilde{{\boldsymbol{X}}}^\top \tilde{\boldsymbol{X}}$. In \cite{L2023}, they suggest using the power method \cite{GL} to compute the maximum eigenvalue of a positive definite high-dimensional matrix within the framework of linearized ADMM. We adopt this efficient method to calculate $\eta$ in our work as well.
Thus, we can solve the following problem
\begin{equation}\label{ladmm_beta1}
{\boldsymbol{\tilde \beta} ^{k + 1}} = \mathop {\arg \min }\limits_{\boldsymbol{\tilde \beta}}\left\{  \lambda_1 {\| \boldsymbol{\beta} \|_1} + (\eta / 2)\| \tilde {\boldsymbol{\beta}}  - \tilde {\boldsymbol{\beta}} ^k + \tilde{\boldsymbol{X}}^\top(\tilde{\boldsymbol{X}} \tilde{\boldsymbol{ \beta}}^k-\tilde{\boldsymbol{y}}^k)/{\eta} \|_2^2 \right\}
\end{equation}
to get an approximate solution of the $\boldsymbol{\tilde \beta}$-subproblem in (\ref{ladmm_beta}). Then,
\begin{equation}\label{admmu_beta0}
\beta_0^{k+1}={\boldsymbol{\tilde \beta} ^{k}}_1-({\tilde{\boldsymbol{X}}^\top(\tilde{\boldsymbol{X}}{\boldsymbol{\tilde \beta} ^k} - {{\tilde {\boldsymbol{y}}}^k})})_1 / {\eta }.
\end{equation}
And  it follows from (\ref{opera1}) that the closed-form solution of updating $\bm \beta$ is obtained component-wisely by 
\begin{equation}\label{ladmm_beta2}
\boldsymbol{\beta} _i^{k + 1} = \text{sign}\left(\boldsymbol{\beta} _i^k - ({\tilde{\boldsymbol{X}}^\top(\tilde {\boldsymbol{X}}{\tilde{\boldsymbol{\beta}} ^k} - {{\tilde {\boldsymbol{y}}}^k})})_{i+1} / {\eta }\right) \cdot \text{max}\left\{0, |\boldsymbol{\beta}_i^k-(\tilde{\boldsymbol{X}}^{\top}(\tilde{\boldsymbol{X}}\tilde{\boldsymbol{\beta}}^k-\boldsymbol{\tilde{y}}^k))_{i+1}/{\eta}|-\lambda_1/{\eta}
\right\}.
\end{equation}
Here,  $( \  )_i$ represents the $i$-th element of the vector in parentheses. 

Second, for the subproblem of updating $\boldsymbol{b}$ in (\ref{primalupdate}), its closed-form solution which follows from (\ref{opera1}) can be got component-wisely by 
\begin{equation}\label{admmu_b}
 \boldsymbol{b}_i^{k + 1} = \text{sign}\left({( \boldsymbol{F}{ \boldsymbol{\beta} ^{k + 1}})_i} -  \boldsymbol{v}_i^k /\mu_2\right) \cdot \text{max}\left\{ 0,|(\boldsymbol{F\beta} ^{k + 1})_i - \boldsymbol{v}_i^k /\mu_2| - \lambda_2/\mu_2\right\}.
\end{equation}

Finally, for the subproblem of  updating $\boldsymbol{r}$ in (\ref{primalupdate}), its closed-form solution which follows from (\ref{opera2}) can also be given component-wisely by 
\begin{small}
\begin{equation}\label{admmu_r}
\boldsymbol{r}_i^{k + 1} = \max \left\{ {\bar{\boldsymbol{y}}_i} - (\boldsymbol{X\beta} ^{k + 1})_i -\beta_0^{k+1} \bm \gamma_i + \frac{{\boldsymbol{u}_i^k}}{{{\mu _1}}} - \frac{\tau }{{n{\mu _1}}},\min\left(0,{\bar{\boldsymbol{y}}_i} - (\boldsymbol{X\beta} ^{k + 1})_i - \beta_0^{k+1} \bm \gamma_i + \frac{{\boldsymbol{u}_i^k}}{{{\mu _1}}} + \frac{{(1 - \tau )}}{{n{\mu _1}}}\right)
\right\}.
\end{equation}
\end{small}
To summarize, the iteration of MLADMM for the unified form (\ref{unified}) can be described in Algorithm \ref{alg1}.
\begin{algorithm}
\caption{MLADMM for solving the unified form (\ref{unified})}
\label{alg1}
\begin{algorithmic}
\STATE {\textbf{Input:} Observation data: $\boldsymbol{X},\boldsymbol{y}$; primal variables: $\beta_0^0, \boldsymbol{\beta}^0,\boldsymbol{b}^0,\boldsymbol{r}^0$; dual variables: $\boldsymbol{u}^0,\boldsymbol{v}^0$; augmented parameters: $\mu_1, \mu_2$; penalty parameter: $\lambda_1, \lambda_2$; and quantile level $\tau  \in [0,1]$.}
\STATE {\textbf{Output:} the total number of iterations $K$, $\beta_0^K$, $\boldsymbol{\beta}^K,\bm b^K, \bm r^K, \bm u^K, \bm v^K$. }
\STATE {\textbf{while} not converged \textbf{do}}
\STATE {\quad 1. Update ${\beta}_0^{k+1}$ using (\ref{admmu_beta0})},
\STATE {\quad 2. Update $\boldsymbol{\beta}^{k+1}$ using (\ref{ladmm_beta2})},
\STATE {\quad 3. Update $\boldsymbol{b}^{k+1}$ using (\ref{admmu_b})},
\STATE {\quad 4. Update $\boldsymbol{r}^{k+1}$ using (\ref{admmu_r})},
\STATE {\quad 5. Update $\boldsymbol{u}^{k+1}$ and $\boldsymbol{v}^{k+1}$ using (\ref{itealgorithm})},
\STATE {\textbf{end while}}
\STATE {\textbf{return} solution}.
\end{algorithmic}
\end{algorithm}

\subsection{Convergence and Computational Cost Analysis}
Clearly, Algorithm \ref{alg1} is not a traditional two-block linearized ADMM algorithm, but rather a four-block ADMM algorithm where the first variable is linearized.
Recently, Chen et al. \cite{CBYY} demonstrated that the direct extension of ADMMs for convex optimization with three or more separable blocks may not necessarily converge and provided an example of divergence. This finding has significantly hindered the use of multi-block ADMM in high-dimensional statistical learning. Thankfully, they also established a sufficient condition that ensures the convergence of the direct extension of multi-block ADMM. An important result of their study is that the convergence of multi-block ADMM is guaranteed when specific coefficient matrices are orthogonal, enabling the iteration of all primal subproblems to be sequentially divided into two independent parts.

 Following the approach proposed in \cite{W2023}, this optimization form can also be extended to solve other existing SFL models, including SFL-SR \cite{JLD}, SFL-LAD \cite{LTZ}, SFL-LS, and SFL-SVM \cite{TSRZ, YX}. In addition, we extended MLADMM to the SFL-SVR (support vector  regression \cite{YGS}) method, which plays an important role in global sensitivity analysis \cite{CLZ,ZLZ} and  reliability analysis \cite{GRC}. For more detailed information, please refer to supplementary materials \ref{B}.

Recall (\ref{primalupdate}),  we update  $\beta_0^{k+1}$ and ${\boldsymbol \beta}^{k+1}$ through $\boldsymbol b^{k}$ and $\boldsymbol r^k$. On the other hand, the updates of $\boldsymbol b^{k+1}$ and $\boldsymbol r^{k+1}$ are independent of each other, and   only related to $\beta_0^{k+1}$ and ${\boldsymbol \beta}^{k+1}$. Then, we can see that the iterations of primal variables subproblems can be sequentially divided into two independent parts, $\{\beta_0^{k+1},\bm \beta^{k+1}\}$ and $\{\bm b^{k+1}, \bm r^{k+1}\}$. As a result,  Algorithm \ref{alg1} can satisfy the convergence condition of multi-block ADMMs proposed by \cite{CBYY}. In addition, we give a more formal analysis about these in supplementary materials \ref{A}. Based on this, we discuss the convergence and computational costs of the proposed LADMM algorithm. 
The convergence property and  convergence rate of Algorithm \ref{alg1} are shown in the following theorem, which is proven in the supplementary materials \ref{A}.
\vspace{-1em}
\begin{thm}\label{TH1}
The sequence $\boldsymbol{w}^{k}= (\beta_0^k,\boldsymbol{\beta}^k, \boldsymbol{b}^k, \boldsymbol{r}^k, \boldsymbol{u}^k, \boldsymbol{v}^k)$ is generated by Algorithm \ref{alg1} with an initial feasible solution $\boldsymbol{w}^{0}= (\beta_0^0,\boldsymbol{\beta}^0, \boldsymbol{b}^0, \boldsymbol{r}^0, \boldsymbol{u}^0,{\boldsymbol{v}^0})$. The sequence $\boldsymbol{w}^{k}$ converges to 
$\boldsymbol{w}^{*}=(\beta_0^*,\boldsymbol{\beta}^{*},\boldsymbol{b}^{*}, \boldsymbol{r}^{*},
\boldsymbol{u}^{*}, \boldsymbol{v}^{*})$, where  $\boldsymbol{w}^{*}$ is an optimal solution point of the (\ref{qflasso_con1}). The $O(1/k)$ convergence rate in a non-ergodic sense can also be obtained, i.e.,
\begin{equation}
\| \boldsymbol{w}^{k}-\boldsymbol{w}^{k+1} \|_{\boldsymbol{H}}^{2} \le \small{\frac{1}{k+1}}\| \boldsymbol{w}^{0}-\boldsymbol{w}^{*} \|_{\boldsymbol{H}}^{2},
\end{equation}
where  $\boldsymbol{H}$ is a symmetric and positive semidefinite matrix  (we denote it $\|\boldsymbol w\|_{\boldsymbol H}:=\sqrt{\boldsymbol w^\top \boldsymbol H \boldsymbol w})$.
\end{thm}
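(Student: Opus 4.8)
The plan is to recast the multi-block iteration of Algorithm \ref{alg1} as a \emph{two-block} linearized ADMM and then invoke the variational-inequality machinery of He and Yuan \cite{BY,BY2}. First I would group the primal variables into $\tilde{\boldsymbol\beta}=(\beta_0,\boldsymbol\beta^\top)^\top$ and $\boldsymbol z=(\boldsymbol r^\top,\boldsymbol b^\top)^\top$, and rewrite the two equality constraints of (\ref{qflasso_con1}) as a single system $\boldsymbol A\tilde{\boldsymbol\beta}+\boldsymbol B\boldsymbol z=\boldsymbol c$ with
\[
\boldsymbol A=\begin{pmatrix}\bar{\boldsymbol X}\\ \bar{\boldsymbol F}\end{pmatrix},\qquad
\boldsymbol B=\begin{pmatrix}\boldsymbol I_n & \boldsymbol 0\\ \boldsymbol 0 & -\boldsymbol I_{p-1}\end{pmatrix},\qquad
\boldsymbol c=\begin{pmatrix}\bar{\boldsymbol y}\\ \boldsymbol 0\end{pmatrix},
\]
using $\bar{\boldsymbol X}=[\boldsymbol\gamma,\boldsymbol X]$ and $\bar{\boldsymbol F}=[\boldsymbol 0_{p-1},\boldsymbol F]$. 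Because $\boldsymbol B$ is block-diagonal and the second-block objective $\frac1n\sum_i\rho_\tau(r_i)+\lambda_2\|\boldsymbol b\|_1$ is separable, the $\boldsymbol z$-subproblem splits exactly into the $\boldsymbol b$-update (\ref{admmu_b}) and the $\boldsymbol r$-update (\ref{admmu_r}); this is precisely the orthogonality/separability condition of \cite{CBYY} flagged after (\ref{primalupdate}). The linearization of the quadratic term turns the $\tilde{\boldsymbol\beta}$-step into a proximal step whose proximal matrix $\eta\boldsymbol I-\tilde{\boldsymbol X}^\top\tilde{\boldsymbol X}$ is positive semidefinite exactly under the stated requirement $\eta\ge\rho(\tilde{\boldsymbol X}^\top\tilde{\boldsymbol X})$.

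Next I would write the first-order (subdifferential) optimality conditions of each subproblem and assemble them, following \cite{BY,BY2}, into a single variational inequality: there is an affine \emph{monotone} operator $F$ and a symmetric positive semidefinite matrix $\boldsymbol H$, built from the proximal block $\eta\boldsymbol I-\tilde{\boldsymbol X}^\top\tilde{\boldsymbol X}$ together with the dual blocks $\tfrac1{\mu_1}\boldsymbol I_n$ and $\tfrac1{\mu_2}\boldsymbol I_{p-1}$, such that each iterate $\boldsymbol w^{k+1}$ solves the VI up to a residual of the form $\boldsymbol H(\boldsymbol w^k-\boldsymbol w^{k+1})$. Combining this with the monotonicity of $F$ at the saddle point $\boldsymbol w^*$ and the defining inequality (\ref{saddle point}) yields a fundamental contraction inequality of the form
\[
\|\boldsymbol w^{k+1}-\boldsymbol w^*\|_{\boldsymbol H}^2 \le \|\boldsymbol w^{k}-\boldsymbol w^*\|_{\boldsymbol H}^2 - \|\boldsymbol w^{k}-\boldsymbol w^{k+1}\|_{\boldsymbol H}^2 .
\]
Summing over $k$ shows $\{\|\boldsymbol w^k-\boldsymbol w^*\|_{\boldsymbol H}\}$ is monotone nonincreasing (so $\{\boldsymbol w^k\}$ is bounded) and that $\sum_k\|\boldsymbol w^k-\boldsymbol w^{k+1}\|_{\boldsymbol H}^2\le\|\boldsymbol w^0-\boldsymbol w^*\|_{\boldsymbol H}^2$, forcing the successive differences to vanish. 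Any cluster point of $\{\boldsymbol w^k\}$ then satisfies the limiting VI and is hence optimal for (\ref{qflasso_con1}); substituting this cluster point for $\boldsymbol w^*$ in the contraction upgrades subsequential convergence to convergence of the whole sequence.

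For the non-ergodic $O(1/k)$ rate I would prove that the successive-difference quantity $\|\boldsymbol w^k-\boldsymbol w^{k+1}\|_{\boldsymbol H}^2$ is itself monotonically nonincreasing in $k$. Granting that, the summability bound above gives $(k+1)\|\boldsymbol w^k-\boldsymbol w^{k+1}\|_{\boldsymbol H}^2\le\sum_{j=0}^k\|\boldsymbol w^j-\boldsymbol w^{j+1}\|_{\boldsymbol H}^2\le\|\boldsymbol w^0-\boldsymbol w^*\|_{\boldsymbol H}^2$, which is exactly the claimed estimate. I expect this monotonicity to be the main obstacle: unlike the contraction inequality it does not follow from monotonicity of $F$ alone, but requires an additional assertion inequality of He--Yuan type, obtained by writing the VI at two consecutive iterates, subtracting, and carefully bounding the resulting cross terms in the $\boldsymbol H$-inner product, where the affine monotone structure of $F$ and the positive semidefiniteness of $\eta\boldsymbol I-\tilde{\boldsymbol X}^\top\tilde{\boldsymbol X}$ must be used together. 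Checking that this argument survives the linearization, which perturbs the usual exact-ADMM identities, is the delicate point.
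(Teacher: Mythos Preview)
Your proposal is correct and follows essentially the same route as the paper: the orthogonality $\boldsymbol A_2^\top\boldsymbol A_3=\boldsymbol 0$ is used to collapse the iteration to a genuine two-block linearized ADMM, Proposition~\ref{prop8} derives exactly the contraction inequality you wrote via the He--Yuan variational-inequality framework, and the paper then cites \cite{LMY} and \cite{BY2} for convergence and the non-ergodic $O(1/k)$ rate (the monotonicity of $\|\boldsymbol w^k-\boldsymbol w^{k+1}\|_{\boldsymbol H}^2$ you anticipated). The only small correction is the form of $\boldsymbol H$: in addition to the proximal block $\boldsymbol G=\eta\boldsymbol I-\tilde{\boldsymbol X}^\top\tilde{\boldsymbol X}$ and the dual block $\tfrac1\mu\boldsymbol I_{n_1}$, it also carries a block $\mu\boldsymbol M^\top\boldsymbol M$ on the second primal variable $(\boldsymbol b,\boldsymbol r)$ (the paper sets $\mu_1=\mu_2=\mu$ for this), which you will see appear when you actually assemble the three optimality conditions into the single VI.
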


Note that $\| \boldsymbol{w}^{0}-\boldsymbol{w}^{*} \|_{\boldsymbol{H}}^{2}$ is the order of $O(1)$, and as a consequence $\| \boldsymbol{w}^{k}-\boldsymbol{w}^{k+1} \|_{\boldsymbol{H}}^{2}=O(1/k)$. For the specific form of the positive semidefinite matrix $\boldsymbol{H}$, see the proof of Theorem \ref{TH1} in  supplementary materials \ref{B}.
Now, let us analyze the computational cost of Algorithm \ref{alg1} when design matrix $\boldsymbol{X}$  is a general numerical matrix and an identity matrix, respectively. When the design matrix of a fused Lasso model is the identity matrix, it is often called ``Fused Lasso Signal Approximator" (FLSA); see \cite{YWL}.  FLSA has been widely used in many fields, and many statistical learning algorithms have been proposed to solve it; see \cite{YWL,CDS} and its references.

Before the MLADMM iterations, we need to use the power method described in \cite{W2023} to calculate $\rho(\tilde{\boldsymbol{X}}^\top \tilde{\boldsymbol{X}})$. The power method is an iterative algorithm that only requires inputting $\tilde{\boldsymbol{X}}$. Each iteration of the power method has a time complexity of $O(p^2)$. Since $\eta > \rho(\tilde{\boldsymbol{X}}^\top \tilde{\boldsymbol{X})}$ is an inequality, the convergence conditions of the power method can be set loosely, for example, as $10^{-2}$. Consequently, the total number of iterations for the power method is typically no more than a few dozen.
For the $(\beta_0, \bm \beta)$-update, we first form 
\begin{align}\label{up1}
({\tilde{\boldsymbol{X}}^\top(\tilde{\boldsymbol{X}}{\tilde {\boldsymbol{\beta}} ^k} - {{\tilde {\boldsymbol{y}}}^k})})=\left[\begin{array}{*{20}{c}}
\mu_1 \bm \gamma^\top ({\bm X}\bm \beta^k + \bm \gamma \beta_0^k-\bar{\bm y} + \bm r^k - \bm u^k/\mu_1)    \\
\mu_1 \bm X^\top ({\bm X}\bm \beta^k + \bm \gamma \beta_0^k-\bar{\bm y} + \bm r^k - \bm u^k/\mu_1)+\mu_2 \bm F^\top(\bm F \bm \beta^k-\bm b^k-\bm v^k/\mu_2)
\end{array} \right] 
\end{align}
at a cost of $O(p^2)$ flops. When $({\tilde{\boldsymbol{X}}^\top(\tilde{\boldsymbol{X}}{\tilde {\bm{ \beta}} ^k} - {{\tilde {\boldsymbol{y}}}^k})})$  has been calculated, updating $\beta_0^{k+1}$ and $\bm \beta^{k+1}$ costs $O(p)$ flops.  Clearly,  the costs of updating $\bm b^{k+1}$ and $\bm r^{k+1}$ are $O(p)$ and $O(np)$ flops, respectively.  The update  of dual variables $\bm u^{k+1}$ and $\bm v^{k+1}$ costs $O(np)+O(p)$. Since $O(p)+O(p)+O(np)+O(np)+O(p)=O(np)$, the overall cost of Algorithm \ref{alg1} is $O(p^2)+ O(np) \times K$ flops, where $K$ is the total number of  MLADMM iterations. Similarly, when $\bm X=\bm I_p$, the overall cost of Algorithm \ref{alg1} is $O(p) + O(p) \times K$ flops. We summarize the following propositions.
\begin{prop}\label{prop2}
For the case when $p>>n$,  the overall computational cost of Algorithm \ref{alg1} is
\begin{equation}\label{cost}
\left\{ \begin{split}
O(p^2)+O(np)  \times K, \   &\text{if} \ \boldsymbol{X} \in \mathbb{R}^{n\times p}, \\
O(p)+O(p) \times K, \  &\text{if} \ \boldsymbol{X}=\boldsymbol I_p,
\end{split} \right.
\end{equation}
where $K$  represents the  total number of iterations and $\boldsymbol I_p$ a $p$-dimensional identity matrix.
\vspace{-1em}
\end{prop}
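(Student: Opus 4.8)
The plan is to prove Proposition \ref{prop2} by a direct flop count of one sweep of Algorithm \ref{alg1}, separating the one-time precomputation of $\rho(\tilde{\bm X}^\top \tilde{\bm X})$ from the recurring per-iteration cost and then multiplying the latter by the iteration count $K$. Two structural facts drive every estimate: (i) $\bm F$ is bidiagonal, so any product $\bm F \bm v$ or $\bm F^\top \bm v$ costs only $O(p)$; and (ii) $\bm X$ is $n\times p$, so a matrix-vector product $\bm X \bm\beta$ or $\bm X^\top \bm w$ costs $O(np)$ provided one never materializes the dense $p\times p$ Gram matrix $\tilde{\bm X}^\top\tilde{\bm X}$. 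The entire argument reduces to checking that each of the five updates in Algorithm \ref{alg1} respects these two budgets.

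First I would dispose of the precomputation. Because $\eta > \rho(\tilde{\bm X}^\top \tilde{\bm X})$ is only an inequality, the power method of \cite{GL} may be stopped after a constant (``dozens'') number of sweeps; each sweep is a single matrix-free pair $\tilde{\bm X}^\top(\tilde{\bm X}\bm v)$, which by facts (i)--(ii) costs $O(np)+O(p)=O(np)$, i.e. at most $O(p^2)$ once $p\gg n$. With only a constant number of sweeps the precomputation is $O(p^2)$ and does not scale with $K$.

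Next I would tally one LADMM iteration for a general $\bm X$. The dominant object is the vector $\tilde{\bm X}^\top(\tilde{\bm X}\tilde{\bm\beta}^k-\tilde{\bm y}^k)$, computed through the explicit block form (\ref{up1}) rather than by forming $\tilde{\bm X}^\top\tilde{\bm X}$: the inner residual $\bm X\bm\beta^k+\bm\gamma\beta_0^k-\bar{\bm y}+\bm r^k-\bm u^k/\mu_1$ and its pre-multiplication by $\bm X^\top$ cost $O(np)$, while the $\bm F^\top(\bm F\bm\beta^k-\cdots)$ term costs $O(p)$ by fact (i), for a total of $O(np)$. Given this vector, the $\beta_0$- and $\bm\beta$-updates (\ref{admmu_beta0}), (\ref{ladmm_beta2}) are component-wise soft-thresholds at $O(p)$; the $\bm b$-update (\ref{admmu_b}) needs $\bm F\bm\beta^{k+1}$ ($O(p)$) followed by a soft-threshold ($O(p)$), hence $O(p)$; the $\bm r$-update (\ref{admmu_r}) needs $\bm X\bm\beta^{k+1}$ ($O(np)$) and component-wise quantile proximal maps ($O(n)$), hence $O(np)$; and the dual updates (\ref{itealgorithm}) reuse $\bm X\bm\beta^{k+1}$ and $\bm F\bm\beta^{k+1}$ for $O(np)+O(p)$. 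Summing, $O(p)+O(p)+O(np)+O(np)+O(p)=O(np)$ per iteration, so over $K$ iterations the running total is $O(p^2)+O(np)\times K$, which is the first line of (\ref{cost}).

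Finally, for the FLSA case $\bm X=\bm I_p$ I would observe that every product with $\bm X$ degenerates: $\bm X\bm\beta=\bm\beta$ and $\bm X^\top\bm w=\bm w$ are $O(p)$ rather than $O(np)=O(p^2)$, and the power method now applies $\tilde{\bm X}$ and $\tilde{\bm X}^\top$ (built only from $\bm I_p$ and the bidiagonal $\bm F$) at $O(p)$ per sweep, so the precomputation drops to $O(p)$. Re-running the per-step tally with every $O(np)$ replaced by $O(p)$ yields an $O(p)$ per-iteration cost and thus $O(p)+O(p)\times K$, the second line of (\ref{cost}). The one genuine care-point, and the step I expect to be the crux, is the bookkeeping that keeps each sweep at $O(np)$: one must evaluate all needed quantities as matrix-vector products and reuse $\bm X\bm\beta^{k+1}$ and $\bm F\bm\beta^{k+1}$ across the $\bm r$-, $\bm b$-, and dual-updates, since naively forming $\tilde{\bm X}^\top\tilde{\bm X}$ (an $O(np^2)$ build with $O(p^2)$ per-sweep apply) or ignoring the sparsity of $\bm F$ would inflate the per-iteration cost to $O(p^2)$ and break the claimed bound.
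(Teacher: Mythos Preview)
Your proposal is correct and follows essentially the same approach as the paper: separate the one-time power-method precomputation from the per-iteration cost, flop-count each of the five updates in Algorithm~\ref{alg1} via matrix--vector products, sum to $O(np)$ per sweep, and specialize to $\bm X=\bm I_p$. If anything, your accounting is slightly sharper than the paper's---you explicitly invoke the bidiagonality of $\bm F$ and note that the matrix-free evaluation of $\tilde{\bm X}^\top(\tilde{\bm X}\tilde{\bm\beta}^k-\tilde{\bm y}^k)$ via (\ref{up1}) is $O(np)$ rather than the paper's looser $O(p^2)$---but the structure and conclusion are identical.
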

\begin{rem}
In fact, the ultrahigh dimension setting $p>>n$ does not appear in FLSA  because there is no $n \times p$ design matrix $\bm X$ for approximating $n$-dimensional signals $\boldsymbol y$. In the case of FLSA ($\boldsymbol{X}=\boldsymbol I_p$), then $n=p$ and thus Algorithm \ref{alg1} for quantile FLSA is $O(n)\times K$ complexity. 
\vspace{-1em}
\end{rem}

\textcolor{red}{Proposition \ref{prop2} shows that Algorithm \ref{alg1} for SFL-QRE has the same complexity as two-block LADMM for solving ultrahigh dimensional SFL-LS regression in  \cite{LMY} and elastic-net SVM  classifications in  \cite{L2023}. In addition, multi-block ADMMs for sparse fused Lasso regression and classification in \cite{YX,W2023}  have  at least  $O(p^2) \times K$ complexity, which  increases faster than our algorithm if $p$ has a large increase.
In general, when employing ADMM algorithms to solve SFL regularized models, the algorithm complexity is approximately $O(p^2) \times K$. On the contrast, for linearized ADMM algorithms tackling SFL regularized models, the complexity is approximately $O(np) \times K$. In addition, for standard pinball-SVM in \cite{HSS}, the solving algorithms employed are quadratic programming-based \cite{HSS}, with a computational complexity of $O(n^3)$, or SMO algorithms \cite{HSS2}, with a computational complexity ranging from $O(n)$ to $O(n^2)$.}

\section{Numerical Results}\label{sec6}
In this section,  we compare the performance of the proposed MLADMM algorithm and the existing state-of-the-art algorithms in synthetic data and real data. In general, all the parameters (for MLADMM) $\mu_1, \mu_2, \lambda_1$ and $\lambda_2$ need to be chosen by the cross-validation (CV) method, while leading to the high computational burden. To reduce the computational cost,  we  suggest that $\mu=\mu_1=\mu_2$,  and $\mu$ is selected from the set $\{0.01,0.1,1\}$. Moreover, $\lambda_1, \lambda_2 \in (0,1]$ increases by $0.01$ every time from $0.01$ until $1$ stops. All the experiments in this paper prove that the selection of these parameters is very effective.  We put the R codes for implementing the proposed MLADMM and reproducing our experiments on  \url{https://github.com/xfwu1016/LADMM-for-qfLasso}.  The iteration initial values of the prime and dual variables are both set to be $\bm 0$, that is, $\beta_0^0=0, {\bm \beta}^0=\bm 0_{p+1}, \bm b^0=\bm 0_{p-1}, \bm r^0=\bm 0_n, \bm u^0=\bm 0_n$ and $\bm v^0= \bm 0_{p-1}$. 

In order to accelerate the convergence speed of MLADMM algorithm, we adopt the following two acceleration  measures  at the same time. One is the effective and simple method of adjusting $\mu$ proposed by \cite{BYW}, that is
\begin{align}\label{sat}
\mu^{k+1}= \left\{ \begin{array}{l}
\mu^k*c_2, \  \text{if} \ c_1 \times \text{primal residual} < \text{dual residual},\\
\mu^k/c_2,\ \ \ \text{if} \ \text{primal residual} > c_1 \times \text{dual residual} ,\\
\mu^k, \qquad \ \text{otherwise},
\end{array} \right.
\end{align}
where $c_1=10, c_2=2$, $\text{primal residual}^2=\|\mu \bm \gamma^\top(\bm r^k-\bm r^{k+1}) \|_2^2 + \|\mu \bm X^\top(\bm r^k-\bm r^{k+1}) +\mu \bm F^\top(\bm b^{k+1}-\bm b^k) \|_2^2$ and $\text{dual residual}^2= \|{\bm X}\bm \beta^{k+1} + \bm \gamma \beta_0^{k+1} + \boldsymbol{r}-\bar{\bm y}\|_2^2+\|\bm F \bm \beta^{k+1}-\bm b^{k+1} \|_2^2$.  This self-adaptive tuning method has been used by Boyd et al. \cite{SNEBJ}, and they also claimed that the convergence of ADMM can be guaranteed  if $\mu^k$ becomes fixed after a finite number of iterations. The other is we select $\eta > 0.75 \rho(\tilde{{\boldsymbol{X}}}^\top \tilde{\boldsymbol{X}})$, which has been proved by He et al. \cite{BFX}  that it can accelerate the convergence rate of LADMM.   Theoretically, as long as $\eta > \rho(\tilde{{\boldsymbol{X}}}^\top \tilde{\boldsymbol{X}})$, the convergence of the algorithm can be ensured. However, the larger the value of $\eta$, the slower the convergence speed of the algorithm. Therefore, it is meaningful to reduce the $\eta$ value to improve the efficiency of the algorithm.

The proposed MLADMM algorithm is iterated until some stopping criterion is satisfied. We use the stopping criterion from the Section 3.3.1 of \cite{SNEBJ}. To be specific, the MLADMM  algorithm is terminated either when the iterative sequence  $\{(\beta_0^{k+1},{\bm \beta}^{k+1}, \bm b^{k+1}, \bm r^{k+1}, \bm u^{k+1}, \bm v^{k+1} )\}$ satisfies the following criterion
\begin{gather}
\text{primal residual} \le \sqrt{p+1}\epsilon_1 +  \epsilon_2 \sqrt{\| \bm \gamma^\top \bm u^{k+1} \|_2^2+\|\bm X^\top\bm u^{k+1}+\bm F^\top \bm v^{k+1} \|_2^2} , \notag \\
 \text{dual residual}  \le \sqrt{n+p-1} \epsilon_1 + \epsilon_2 \sqrt{\max\left\{\|\bm X \bm \beta^{k+1}+\bm \gamma \beta_0^{k+1}\|_2^2 + \|\bm F \bm \beta^{k+1} \|_2^2, \| \bm b^{k+1} \|_2^2 + \| \bm r^{k+1} \|_2^2,  {n}\right\}},  \notag
\end{gather}
where $\epsilon_1=10^{-4}$ and $\epsilon_2=10^{-4}$, or when the number of MLADMM iterations exceeds a given number, such as $500$. All numerical experiments were run on R (version 4.1.1) software on the Inter E5-2650 2.0 GHz processor with 16 GB memory.

\subsection{Synthetic Data}\label{sec61}
\subsubsection{Classification}\label{sec41}
$\bullet$  \textbf{Example 1}:  We use the two-dimensional example which was used in \cite{HSS}. The two classes ``$+$" and ``$-$" are generated from the distributions $N(\bm \mu_+,\Sigma_+)$ and $N(\bm \mu_-,\Sigma_-)$, where $\bm \mu_+=(0.5,-3)^\top$, $\bm \mu_-=(-0.5,3)^\top$ and
$\Sigma_+=\Sigma_-=\left[\begin{array}{*{20}{c}}
0.2 & 0\\
0 & 3
\end{array} \right] 
$. In this example, the Bayes classifier is $F=2.5 x_1-x_2$. We set $n =50,100,200$ and $500$, and use the pinball-SVM in \cite{HSS}, the SFL-SVM in  \cite{YX} and the proposed SFL-PSVM to estimate the classification boundary $x_2=\beta_1 x_1+\beta_0$. The classification boundary derived by the Bayes
classifier is $x_2=2.5 x_1$. Then, the ideal result is $\beta_1=2.5$ and $\beta_0=0$. We repeat each experiment 100 times in Table  \ref{Tab1} and \textcolor{red}{the left half of Figure \ref{Fig8}}, and  record the mean and standard deviation in Table  \ref{Tab1}.

\textcolor{red}{As suggested by a reviewer, it is important to study convergence analysis of our algorithm before comparing results. In the left half of Figure \ref{Fig8}, we compared the variation of primal residuals (since the dual residuals converge faster, only the original residuals are presented) with increasing iteration times for different sample sizes. The findings suggest that, overall, MLADMM exhibits rapid convergence. However, as the sample size grows, MLADMM tends to converge at a slower pace. }
\begin{table}[h]\tiny
  \caption{\small{Classification boundary for Example 1.}}
  \centering
  \renewcommand{\arraystretch}{1.3}
  \resizebox{\textwidth}{!}{
  \begin{tabular}{ccccccc}
  \Xhline{0.8pt}
                                &  Methods                  &           &  $n=50$         &  $n=100$        &  $n=200$        &  $n=500$  \\
  \Xhline{0.5pt}
  \multirowcell{4}{$\tau=1$}    & \multirowcell{2}{pinball-SVM} & $\beta_1$ & $2.469\pm1.016$ & $2.547\pm0.671$ & $2.534\pm0.527$ & $2.586\pm0.272$\\
                                &                           & $\beta_0$ & $0.004\pm0.372$ & $-0.006\pm0.263$ & $0.002\pm0.182$ & $-0.001\pm0.098$\\
                                & \multirowcell{2}{SFL-PSVM} & $\beta_1$ & $2.497\pm0.721$ & $2.488\pm0.389$ & $2.445\pm0.362$ & $2.424\pm0.201$\\
                                &                           & $\beta_0$ & $0.009\pm0.254$ & $0.002\pm0.174$ & $0.003\pm0.152$ & $0.002\pm0.123$\\
  \multirowcell{4}{$\tau=0.5$}  & \multirowcell{2}{pinball-SVM} & $\beta_1$ & $2.451\pm1.213$ & $2.561\pm0.753$ & $2.565\pm0.610$ & $2.572\pm0.263$\\
                                &                           & $\beta_0$ & $0.005\pm0.315$ & $0.024\pm0.216$ & $-0.006\pm0.203$ & $0.008\pm0.121$\\
                                & \multirowcell{2}{SFL-PSVM} & $\beta_1$ & $2.623\pm0.697$ & $2.487\pm0.412$ & $2.538\pm0.427$ & $2.605\pm0.231$\\
                                &                           & $\beta_0$ & $0.089\pm0.195$ & $0.030\pm0.152$ & $0.045\pm0.143$ & $0.006\pm0.140$\\   
\multirowcell{4}{$\tau=0.1$}  & \multirowcell{2}{pinball-SVM} & $\beta_1$ & $2.697\pm1.124$ & $2.632\pm0.763$ & $2.593\pm0.585$ & $2.538\pm0.289$\\
                                &                           & $\beta_0$ & $0.034\pm0.589$ & $-0.027\pm0.203$ & $-0.007\pm0.215$ & $0.004\pm0.133$\\
                                & \multirowcell{2}{SFL-PSVM} & $\beta_1$ & $2.531\pm0.530$ & $2.383\pm0.361$ & $2.449\pm0.273$ & $2.459\pm0.203$\\
                                &                           & $\beta_0$ & $0.060\pm0.217$ & $0.039\pm0.149$ & $0.006\pm0.156$ & $0.003\pm0.101$\\
                                & \multirowcell{2}{SFL-SVM}  & $\beta_1$ & $2.362\pm0.831$ & $2.497\pm0.521$ & $2.542\pm0.471$ & $2.413\pm0.212$\\
                                &                           & $\beta_0$ & $0.017\pm0.367$ & $0.046\pm0.203$ & $0.015\pm0.224$ & $0.019\pm0.198$\\
  \Xhline{0.8pt}
  \end{tabular}}
\label{Tab1}
\end{table}

 The above three SVMs  converge to the Bayes classifier and the mean values are pretty
good. However, the  standard deviation of the SFL-SVM is
obviously larger than that of the SFL-PSVM. This observation implies that the SFL-PSVM is more stable than SFL-SVM for resampling,
which shows the SFL-PSVM has the potential advantage in ultrahigh-dimensional problems.
\begin{figure}[h]
  \centering
  \includegraphics[width=16cm]{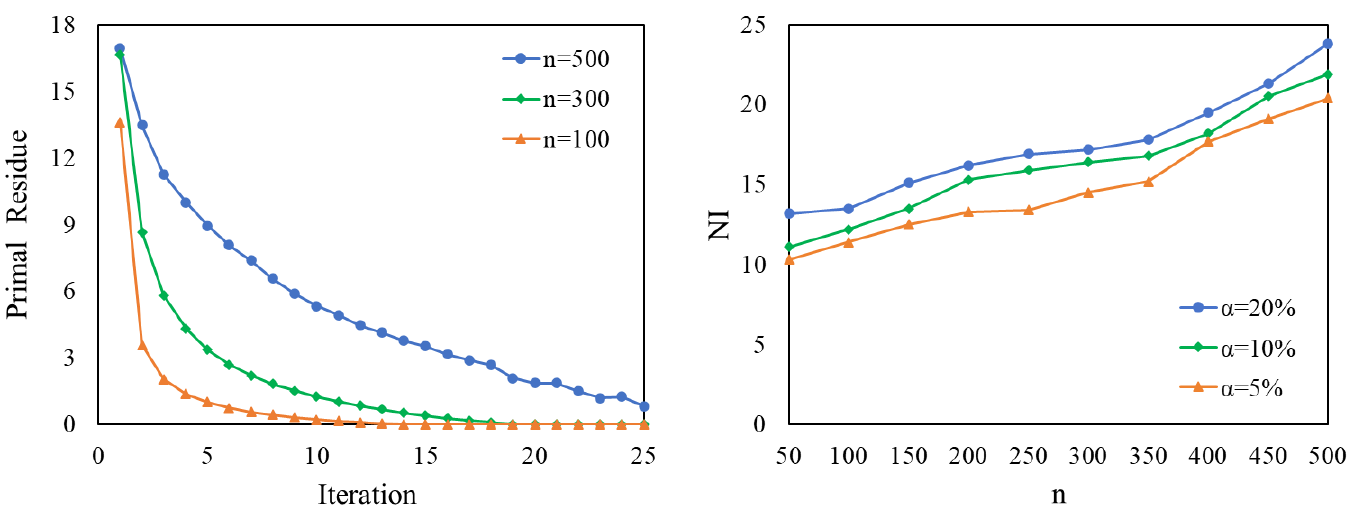}
  \caption{\small{Left figure: The variation of the primal residual with increasing number of iterations. Right figure: The total number of iterations (NI) varies with the increase of samples.}}
  \label{Fig8}
\end{figure}

$\bullet$  \textbf{Example 2}:  Like \cite{HSS}, we add noise to further demonstrate the stability of SFL-PSVM resampling. The labels of the noise points are chosen from $\{ +, - \} $ with equal
probability. The positions of these noised points are generated from Gaussian distribution $N(\bm \mu_n,\Sigma_n)$ with $\bm \mu_n=(0,0)^\top$ and $\Sigma_n=\left[\begin{array}{*{20}{c}}
1 & -0.8\\
-0.8 & 1
\end{array} \right] 
$.  These noised points affect the labels around the boundary and the level of the noise is controlled by the ratio of the noise data in the training set, denoted by $\alpha$. Note that the noise does not change the Bayes classifier, but it can affect the result of SFL-SVM. We give the mean and the standard deviation for repeating  the sampling and training process 100 times in \textcolor{red}{the right half of Figure \ref{Fig8}} and  Table \ref{Tab2}.

\begin{table}[h]\tiny
  \caption{\small{Classification Boundary for Noise Corrupted Data in Example 2.}}
  \centering
  \renewcommand{\arraystretch}{1.3}
  \resizebox{\textwidth}{!}{
  \begin{tabular}{ccccccc}
  \Xhline{0.8pt}
                                &  Method                  &           & $n=100,\textcolor{red}{\alpha= 5 \%}$ & $n=100,\textcolor{red}{\alpha= 10\%}$ & $n=200,\textcolor{red}{\alpha=5\%}$ & $n=200,\textcolor{red}{\alpha=10\%}$  \\
  \Xhline{0.5pt}
  \multirowcell{4}{$\tau=1$}    & \multirowcell{2}{pinball-SVM} & $\beta_1$ & $2.403\pm0.796$ & $2.365\pm0.815$ & $2.349\pm0.520$ & $2.331\pm0.565$\\
                                &                           & $\beta_0$ & $-0.065\pm0.258$ & $0.047\pm0.273$ & $0.042\pm0.206$ & $0.017\pm0.172$\\
                                & \multirowcell{2}{SFL-PSVM} & $\beta_1$ & $2.548\pm0.562$ & $2.641\pm0.631$ & $2.532\pm0.473$ & $2.503\pm0.472$\\
                                &                           & $\beta_0$ & $0.053\pm0.241$ & $0.052\pm0.249$ & $0.019\pm0.173$ & $0.021\pm0.163$\\
  \multirowcell{4}{$\tau=0.5$}  & \multirowcell{2}{pinball-SVM} & $\beta_1$ & $2.317\pm0.723$ & $2.257\pm0.768$ & $2.359\pm0.539$ & $2.278\pm0.534$\\
                                &                           & $\beta_0$ & $-0.038\pm0.230$ & $-0.014\pm0.282$ & $0.044\pm0.172$ & $-0.015\pm0.194$\\
                                & \multirowcell{2}{SFL-PSVM} & $\beta_1$ & $2.445\pm0.543$ & $2.419\pm0.587$ & $2.455\pm0.454$ & $2.491\pm0.454$\\
                                &                           & $\beta_0$ & $0.047\pm0.219$ & $0.019\pm0.265$ & $0.029\pm0.153$ & $0.028\pm0.171$\\          
 \multirowcell{4}{$\tau=0.1$}  & \multirowcell{2}{pinball-SVM} & $\beta_1$ & $2.197\pm0.865$ & $2.116\pm0.523$ & $2.136\pm0.517$ & $2.145\pm0.498$\\
                                &                           & $\beta_0$ & $-0.034\pm0.242$ & $0.032\pm0.214$ & $0.032\pm0.205$ & $0.019\pm0.215$\\
                                & \multirowcell{2}{SFL-PSVM} & $\beta_1$ & $2.591\pm0.521$ & $2.591\pm0.416$ & $2.395\pm0.443$ & $2.395\pm0.437$\\
                                &                           & $\beta_0$ & $0.037\pm0.199$ & $0.015\pm0.271$ & $0.034\pm0.175$ & $0.029\pm0.184$\\
                                & \multirowcell{2}{SFL-SVM}  & $\beta_1$ & $1.424\pm0.536$ & $1.334\pm0.679$ & $1.655\pm0.515$ & $1.597\pm0.513$\\
                                &                           & $\beta_0$ & $0.053\pm0.227$ & $0.026\pm0.294$ & $0.042\pm0.203$ & $0.029\pm0.329$\\
  \Xhline{0.8pt}
  \end{tabular}}
\label{Tab2}
\end{table}
\textcolor{red}{Before analyzing and comparing the results, we will investigate the convergence of our algorithm in the presence of noise in the right half of Figure \ref{Fig8}. Unlike the left half, which depicts the variation of the primal residual with increasing iteration times, the right half reflects the total number of iterations required by the algorithm for each sample under different noise levels. This result indicates that it is consistent with the conclusion shown in the left half of the graph, as the number of samples increases, the number of iterations required by the algorithm will slightly increase. Although our method may increase with the increase of noise ratio, the increase is not significant.} In this example, the classification boundary calculated by the SFL-SVM is quite different from the Bayes classifier, confirming the sensitivity of the SFL-SVM to noise around the boundary. In the next example, we will show the performance of these SVMs on ultrahigh dimensional datasets.  \textcolor{red}{For additional experiments on other noises such as $t$, Cauchy, mixed normal, lognormal, and more estimation metrics, we have included them in Table \ref{tab7} in Supplementary Material \ref{C}.}

$\bullet$  \textbf{Example 3}:  The ultrahigh dimensinal classification data are generated from the distributions $N(\bm \mu_+,\Sigma)$ and $N(\bm \mu_-,\Sigma)$, respectively, where $\bm \mu_+=(1,\dots,1,0,\dots,0)^\top$ with the first 10 elements being 1 and the rest being 0,  $\bm \mu_-=(-1,\dots,-1,0,\dots,0)^\top$ with the first 10 elements being -1 and the rest being 0, and the covariance matrix is
\begin{equation}
\boldsymbol{\Sigma}=\left[\begin{array}{*{20}{c}}
\bm {\Sigma}_{10}^*&  \bm0 \\ 
\bm 0 &\bm I_{(p-10)}
\end{array} \right].
\end{equation}
Here, $\bm {\Sigma}_{10}^*$ is a $10 \times 10$ matrix with all diagonal elements being 1 and the rest being $\rho<1$. Note that  the larger $\rho$ implies the higher correlation between the first 10 features of the input data. This data has been used in \cite{L2023}  for SVMs. Here, we  implement our experiments on $\rho=0.5$.  Clearly,  the first 10 features play a key role on the classification  boundary. For this example, the Bayes classifier is $F=\sum_{j=1}^{10}x_j$.   We also add noise into this training set as in Example 2 with $\bm \mu_n=(0,0, \dots,0)^\top$ and $\Sigma_n=\bm \Sigma$, \textcolor{red}{and $\bm \alpha$ represents the proportion of noise}.  
\textcolor{red}{As in the study in \cite{L2023}, we set the number of training samples to 100 (\( n=100 \)), the number of testing samples to 500 (\( m=500 \)), and the data dimensions (\( p \)) to be 2000, 5000, and 10000.}
We record the averaged results of 100 runnings in Table \ref{Tab3}. 

\begin{table}[h]\small
  \caption{\scriptsize{Experimental results of SFL-PSVM, SFL-SVM and DrSVM on the synthetic datasets. Here, $n$ is the size of training dataset; $m$ is the size of test dataset; $p$ is the dimension of the input vector; $\alpha$ is the level of the noise; CAR is the classification accuracy rate; NI is the number of iteration; CT is the computation time(s); NTSF is the number of true selected features.}}
  \centering
  \renewcommand{\arraystretch}{1.3}
  \begin{tabular}{cccccccccccccc}
  \Xhline{0.8pt}
     \multirow{2}*{Date Size}  &  \multirow{2}*{$\alpha$} &  \multicolumn{4}{c}{SFL-PSVM}   &  \multicolumn{4}{c}{SFL-SVM}   &  \multicolumn{4}{c}{DrSVM} \\
  \cmidrule(lr){3-6}\cmidrule(lr){7-10}\cmidrule(lr){11-14}
                               &                       &  CAR  &   CT  & NI&  NTSF  &  CAR  &  CT  &  NI  &  NTSF  &  CAR  &  CT  &  NI  &  NTSF   \\
  \hline
  \multirowcell{3}{$p=2000$} &  \textcolor{red}{5\%} & 0.978 & 1.184 & 7 & 10 & 0.972 & 5.513  & 82 & 9.9 & 0.969 & 2.201 & 24 & 10  \\
                                               &  \textcolor{red}{10\%}  & 0.977 & 1.088 & 7 & 10 & 0.970 & 5.678  & 85 & 10  & 0.965 & 2.257 & 26 & 10 \\
                                               & \textcolor{red}{20\%}  & 0.972 & 1.115 & 7 & 10 & 0.965 & 5.721  & 87 & 9.8 & 0.961 & 2.343 & 29 & 9.9\\
  \multirowcell{3}{$p=5000$} &  \textcolor{red}{5\%} & 0.973 & 4.273 & 6 & 10 & 0.967 & 15.527 & 86 & 9.8 & 0.961 & 6.342 & 28 & 9.9\\
                                               &  \textcolor{red}{10\%}  & 0.972 & 4.196 & 6 & 10 & 0.962 & 15.718 & 89 & 9.7 & 0.957 & 6.417 & 32 & 10\\
                                               &  \textcolor{red}{20\%}  & 0.966 & 4.402 & 6 & 10 & 0.957 & 16.396 & 95 & 9.9 & 0.952 & 6.486 & 34 & 10\\
  \multirowcell{3}{$p=10000$} &   \textcolor{red}{5\%} & 0.966 & 10.38 & 6 & 10 & 0.959 & 37.315 & 90 & 10  & 0.956 & 15.19 & 36 & 9.9\\
                                               & \textcolor{red}{10\%}  & 0.962 & 11.02 & 6 & 10 & 0.952 & 46.379 & 96 & 9.9 & 0.950 & 15.22 & 39 & 9.8\\
                                               &  \textcolor{red}{20\%}  & 0.951 & 10.65 & 7 & 10 & 0.947 & 48.284 & 99 & 9.7 & 0.944 & 15.14 & 41 & 10\\
  \Xhline{0.8pt}
  \end{tabular}
\label{Tab3}
\end{table}

\textcolor{red}{Table \ref{Tab3} indicates that SFL-PSVM (the proposed MLADMM) exhibits certain  advantages in terms of computation time and accuracy when compared with SFL-SVM (ADMM in \cite{YX}) and DrSVM (LADMM in \cite{L2023})}.
The main reason for the less computation time of MLADMM is that it has fewer iterations. In addition, the number of true selected features (NTSF) selected by MLADMM is always 10, which is better than  that of   ADMM and LADMM.  \textcolor{red}{For additional experiments on other noises such as $t$, Cauchy, mixed normal, lognormal, and more estimation metrics, we have included them in Table \ref{tab8} in Supplementary Material \ref{C}.}

\textcolor{red}{During the review process, the reviewer raised two questions: one is the reason why ADMM \cite{YX} requires a long computation time, and the other is that LADMM does not need the time shown in Table \ref{Tab3} to calculate DrSVM in  \cite{L2023}. For the first question, there are mainly two reasons. The first one is that ADMM requires at least $p$ more subproblems to iterate than LADMM and MLADMM, which requires more iterative steps to converge the algorithm when $p$ is large. The second one is that ADMM needs to solve the inverse of a $p$-dimensional matrix. Although \cite{YX}  provided the conjugate gradient method for inversion, it still requires a significant computational burden. For the second question, please note that in \cite{L2023}, the simulated production classification data does not have noise at the classification  boundary, which can cause slow convergence of the LADMM algorithm. We can see that the LADMM algorithm here requires more than 20 iteration steps, while in Table 2 of \cite{L2023}, the number of iterations is 7 or 8. Therefore, the data presented in Table \ref{Tab3} is several times larger than Table 2 in  \cite{L2023}. This phenomenon once again reflects that SFL-PSVM is insensitive to noise, so the algorithm we solved, MLADMM, can converge so quickly.}

\subsubsection{Regression}
In the synthetic data, the model for the simulated data is generated from $\boldsymbol{y}_{i}=\boldsymbol{x_i}^\top \boldsymbol{\beta}+{\boldsymbol{\varepsilon} _i}, i=1,2,\dots,n$. We consider two simulations. The first one applies SFL-QRE ($\tau = 0.5$) and SFL-LS to regression where the design matrix is a general numerical matrix, and the other one applies quantile fused Lasso signal approximator (QFLSA) and FLSA to signal approximation (pulse detection) where the design matrix is the identity matrix. We will utilize the proposed MLADMM algorithms and MADMM  \cite{W2023} to solve SFL-QRE, as well as  ADMM  \cite{YX} and  LADMM \cite{LMY} to solve SFL-LS regression. Furthermore, we will apply the MLADMM  to implement QFLSA, 
 and \textbf{flsa} package \cite{H2010} and ADMM  \cite{YX}    to implement  FLSA. Each experiment may include various error terms, such as normal, mixed normal, $t$, and $\text{Cauchy}$ distributions, to evaluate the performance of the methods under different conditions.
\\
\\
$\bullet$  \textbf{Example 1}:  Each row of the design matrix $\boldsymbol{X}$ is generated by $N(\boldsymbol{0},\boldsymbol{\Omega})$ distribution with Toeplitz correlation matrix ${\boldsymbol{\Omega} _{ij}} = {0.5^{\left| {i - j} \right|}}$ and normalized such that each column has $\ell _2$ norm $\sqrt{n}$. To produce sparse and blocky coefficients, following \cite{LMY}, we divide $p$ into 80 groups in order, and randomly selecte 10 groups
denoted as a sample set $\mathcal{A}$ whose cardinality is $s$. $\mathcal{A}^c$ is the complement of set $\mathcal{A}$. Also we set $(n,p,s)=(720,2560,320)$ and the true coefficient vector $\boldsymbol{\beta}^*$ is generated by
\begin{equation}\label{sbeta}
\boldsymbol{\beta}^* = \left\{ \begin{array}{l}
\boldsymbol{\beta} _{\mathcal{A}_i}^*=\text{U}\left[ { - 3,3} \right] \times \boldsymbol{1}_{\mathcal{A}_i},\ \text{if} \ i=1,2,\dots,10,\\
\textbf{0},\ \ \ \ \ \ \ \ \ \ \ \ \ \ \ \ \ \ \ \ \ \ \ \ \ \ \ \text{otherwise}.
\end{array} \right.
\end{equation}
Here, $\text{U} \left[ -3,3 \right]$ denotes  the uniform distribution on the interval $\left[ { - 3,3} \right]$.  In order to evaluate the performance of SFL-QRE and SFL-LS, we define some specific measurements  as  \cite{LMY}:
\begin{enumerate}
\item $\#\{|\beta_i-\beta_i^*|<0.1, i \in \mathcal{A} \}$ represents the number of  $\beta_i$ satisfies $|\beta_i-\beta_i^*|<0.1$ for $i \in \mathcal{A}$, where $\#$ defines the number of the elements in the set;

\item  $\max_{i\in \mathcal{A}}|\beta_i-\beta_i^*|$ represents maximum difference between $\beta_i$ and $\beta_i^*$ for  $i \in \mathcal{A}$;

\item  $\#\{|\beta_i|<0.1, i \in \mathcal{A}^c \}$  represents  the number of $\beta_i$ satisfies $|\beta_i|<0.1$ for $i \in \mathcal{A}^c$;

\item  $\max_{i\in \mathcal{A}^c}|\beta_i|$ represents maximum value of  $\beta_i$ for $i \in \mathcal{A}^c$.

\end{enumerate}
In addition, we also recorded the  number of iterations (NI) and calculation time (CT).
We report the averaged selected results of 100 runnings in Table \ref{Tab4}.
\begin{table}[h]\small
  \caption{\small{Selected results for the SFL-QRE and SFL-LS regression under different noise, and the best result is highlighted in bold}.}
  \centering
  \renewcommand{\arraystretch}{1.3}
  \resizebox{\textwidth}{!}{
  \begin{tabular}{ccccccccc}
  \Xhline{0.8pt}
     Error  &  Methods  & NI & CT & $\#\{|\beta_i-\beta_i^*|<0.1, i \in \mathcal{A} \}$  &  $\max_{i\in \mathcal{A}}|\beta_i-\beta_i^*|$  &   
$\#\{|\beta_i|<0.1, i \in \mathcal{A}^c \}$  &  $\max_{i\in \mathcal{A}^c}|\beta_i|$  \\
  \hline
  \multirowcell{5}{$N(0,1)$}  & MLADMM & \bf 72 & \bf 8.6  &\bf320 & 0.0801 & \bf 2240 & 0.0486  \\
                              & MADMM & 102 &12.4 &\bf 320 & 0.0826 & \bf 2240 & 0.0452  \\
                              & LADMM & 103 &12.1 & \bf 320 & 0.0973 & \bf 2240 & 0.3587  \\
                              & ADMM  & 91 &14.8  &\bf 320 &\bf 0.0742 & \bf 2240 & \bf 0.2614  \\
                              &          &    &      &        &           &          &  \\
  \multirowcell{5}{$t(2)$}    & MLADMM & \bf 68 &\bf 8.1  & \bf 316 & \bf 0.1392 & \bf 2239 & \bf 0.1429  \\
                              & MADMM & 117  & 13.8& 314 & 0.1405 & 2237 & 0.1501  \\
                              & LADMM & 153   & 19.9 & 309 & 0.3762 & 2216 & 0.7134  \\
                              & ADMM  & 139 &17.4 & 306 & 0.2910 & 2223 & 0.6580  \\
                              &          &    &      &        &           &          &  \\
  \multirowcell{5}{Cauchy}    & MLADMM & \bf 84 &\bf 9.1  & \bf 319 & \bf 0.1802 & \bf 2234 & \bf 0.2103  \\
                              & MADMM & 114 & 13.0 &313 & 0.1889 & 2231 & 0.2248  \\
                              & LADMM & 500+ &50.6  & 182 & 1.9300 & 1640 & 1.6920  \\
                              & ADMM & 500+& 53.7 & 177 & 1.8210 & 1710 & 1.7432  \\
                              &          &    &      &        &           &          &  \\
  \multirowcell{4}{$0.9N(0,1)+0.1N(0,25)$}  & MLADMM & \bf 75 &\bf 8.8 & \bf 318 & \bf 0.1082 & \bf 2239 & \bf 0.1097  \\
                                            & MADMM & 97 &12.1 & 317 & 0.1115 & \bf 2239 & 0.1016  \\ 
                              & LADMM & 111& 14.2 & 312 & 0.2418 & 2231 & 0.4691  \\
                              & ADMM & 99& 12.5 & 316 & 0.2720 & 2229 & 0.4708  \\
  \Xhline{0.8pt}
  \end{tabular}}
\label{Tab4}
\end{table}
\begin{figure}[H]
  \centering
  \includegraphics[width=8cm,height=5cm]{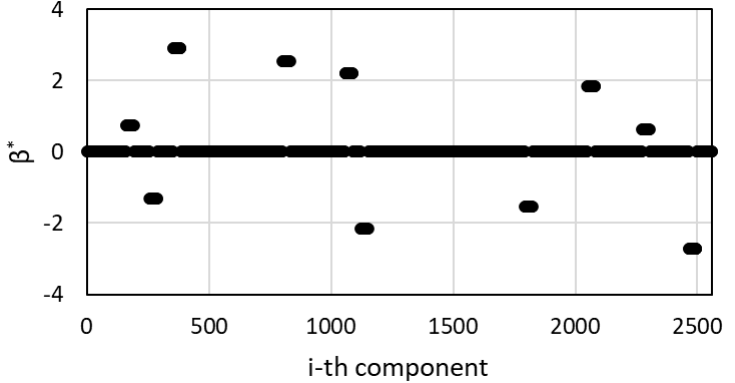}
  \caption{\small{True regression coefficients.}}
  \label{Fig0}
\end{figure}

Table \ref{Tab4} shows that  SFL-LS regression is irreplaceable in the normal distribution. But in the heavy-tailed distribution, its performance is worse than that of SFL-QRE, especially in the Cauchy distribution. We place the true coefficients of the randomly generated regression in Figure \ref{Fig0} and the visualization results of coefficient estimation under various error distributions in Figure \ref{Fig1}. Figure \ref{Fig1} shows that even under the extreme thick tailed distribution of Cauchy, our SFL-QRE recovery signal can still recover well. The recovery results of these coefficients indicate that MLADMM  can effectively solve  SFL-QRE.

\begin{figure}[H]
  \centering
  \includegraphics[width=15cm,height=10cm]{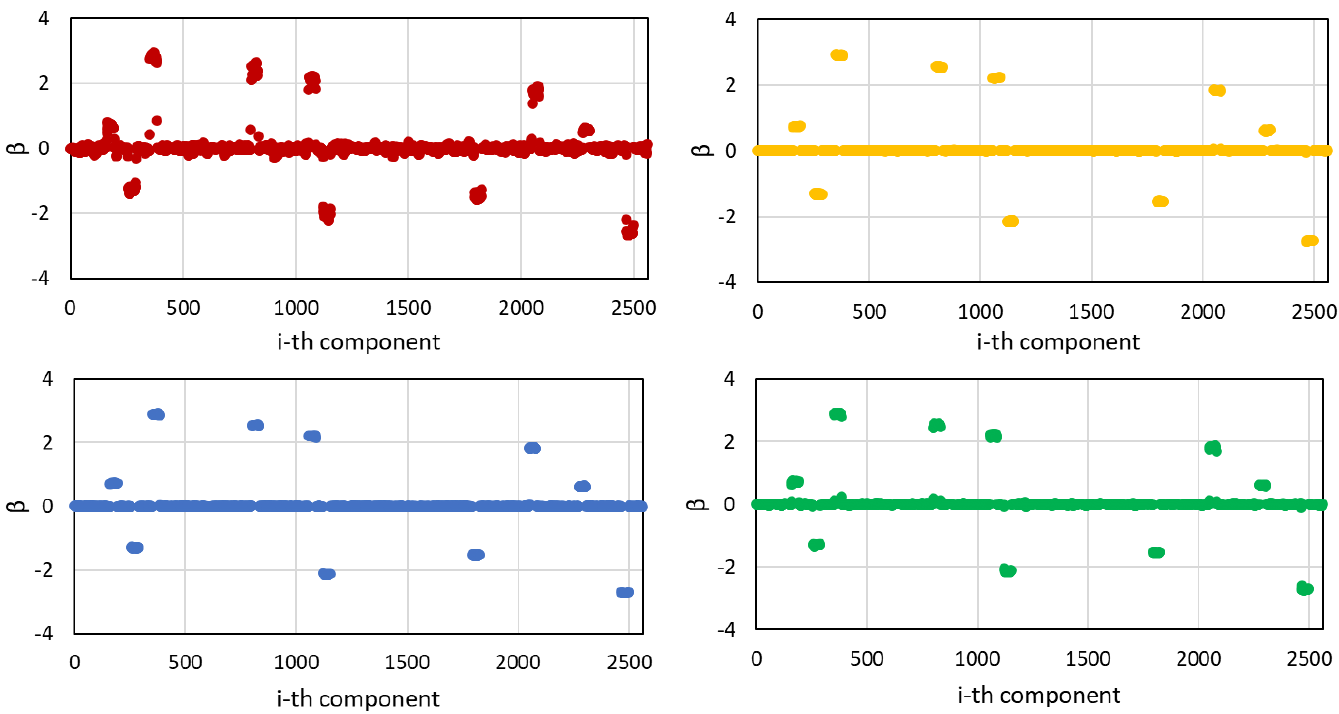}
  \caption{\small{Recovery results for SFL-QRE under various error distributions. Top left: Cauchy distribution. Top right: mixed normal distribution. Bottom left: normal distribution.   Bottom right: $t(2)$ distribution. }}
  \label{Fig1}
\end{figure}

$\bullet$  \textbf{Example 2}:  We consider the problem of estimating pulses of varying width in the presence of different distribution noises.  As in \cite{PS2016}, generate pulses by $\boldsymbol{y}_{i}=\boldsymbol{\beta}_i+ 0.2 {\boldsymbol{\varepsilon} _i}, i=1,2,\dots,n$ and model the pulse signal as sparse piecewise constant. We apply the FLSA and QFLSA to estimate the individual pulses. In order to save space, we only present the results of heavy-tailed error distribution,  and numerical experiments related to normal errors can be found in \cite{PS2016}. 

Figure \ref{Fig2} shows synthetic pulse signal and noisy pulse signal. \textcolor{red}{The first picture in Figure \ref{Fig2}   is the original pulse signal, which is a typical sparse and blocky signal. The second  picture in Figure \ref{Fig2} shows the error of the original pulse signal plus the $t(2)$ distribution. It can be seen that there is a significant difference from the original signal, and the sparse block signal has been contaminated and not obvious. The last  picture in Figure \ref{Fig2} shows the error of the original signal plus the Cauchy distribution. Due to the characteristics of the Cauchy distribution, the absolute value of some noise values may be relatively large, resulting in some large absolute values in the final signal, making certain parts of the signal steep (pay attention to the numerical values of the vertical axis).}

\textcolor{red}{Figure \ref{Fig3} shows  the recovery of pulse signals contaminated by $t$ distribution and Cauchy distribution errors by FLSA and QFLSA. We use two method, namely flsa package  and ADMM, to implement FLSA, and MLADMM proposed in this paper to implement QFLAS. The results in the first column of Figure \ref{Fig3} represent signal recovery contaminated by $t$ distribution and Cauchy distribution using the flsa package. The second column depicts signal recovery using the ADMM algorithm under $t$ distribution and Cauchy distribution contamination, while the third column illustrates signal recovery using the MLADMM algorithm under the same contaminations.}

\textcolor{red}{To compare the performance of these three algorithms in recovering pulse  signals, please first pay attention to the values of the vertical coordinates of the six pictures in Figure 5. Our proposed algorithm MLADMM effectively recovers the numerical range of [-3,4] in the original pulse signal, while FLSA and ADMM algorithms perform similarly, with signals that cannot be recovered falling within this range, especially in the Cauchy distribution.}

To sum up, Figure \ref{Fig3}  show that QFLSA is more effective than FLSA in signal recovery of heavy-tailed noise. In fact, FLSA  has lost the accuracy of signal recovery from $t$ and Cauchy noise   pulse signals.  This indicates that QFLSA has great potential advantages for complex pulse signal recovery.

\begin{figure}[h]
  \centering
  \includegraphics[width=15cm,height=5cm]{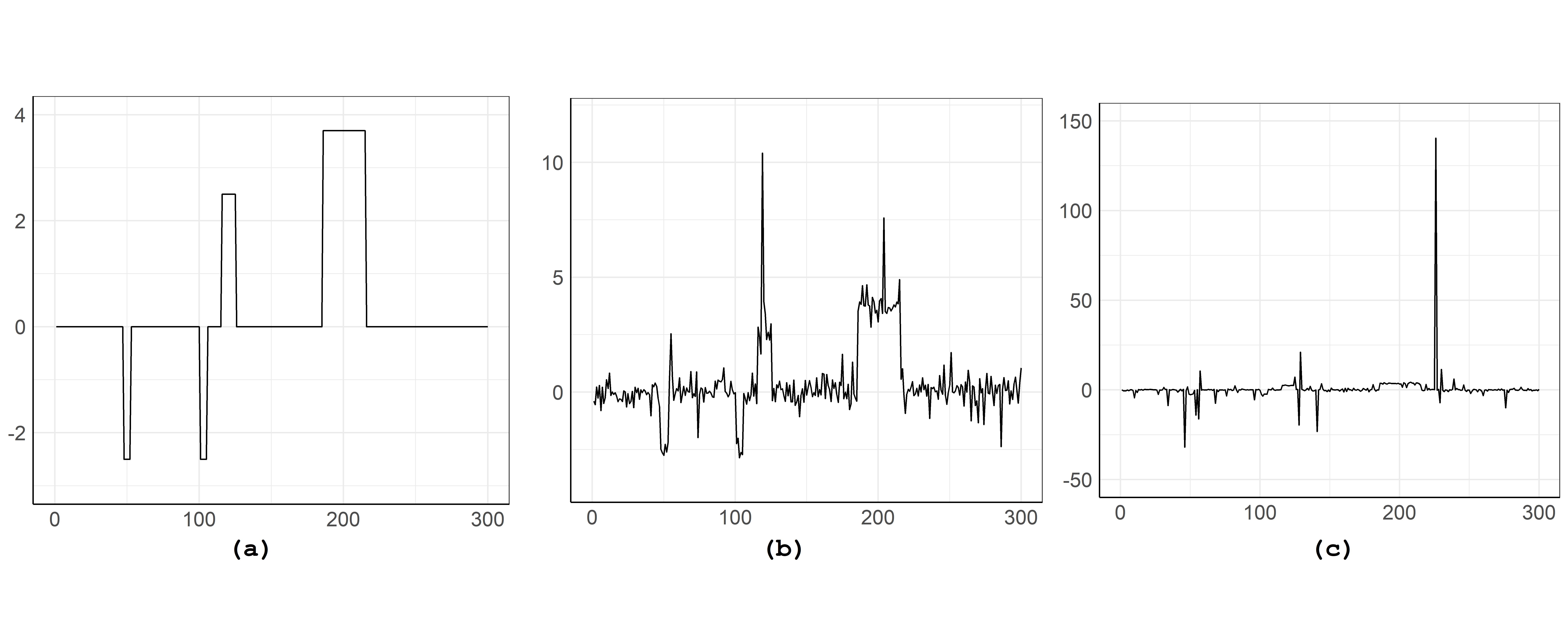}
  \caption{\small{Synthetic pulse signal and noisy pulse signal. (a) synthetic pulse signal, (b) $t$ distribution noise pulse signal, (c) Cauchy distribution noise pulse signal.}}
  \label{Fig2}
\end{figure}

\begin{figure}[H]
  \centering
  \includegraphics[width=15cm,height=9cm]{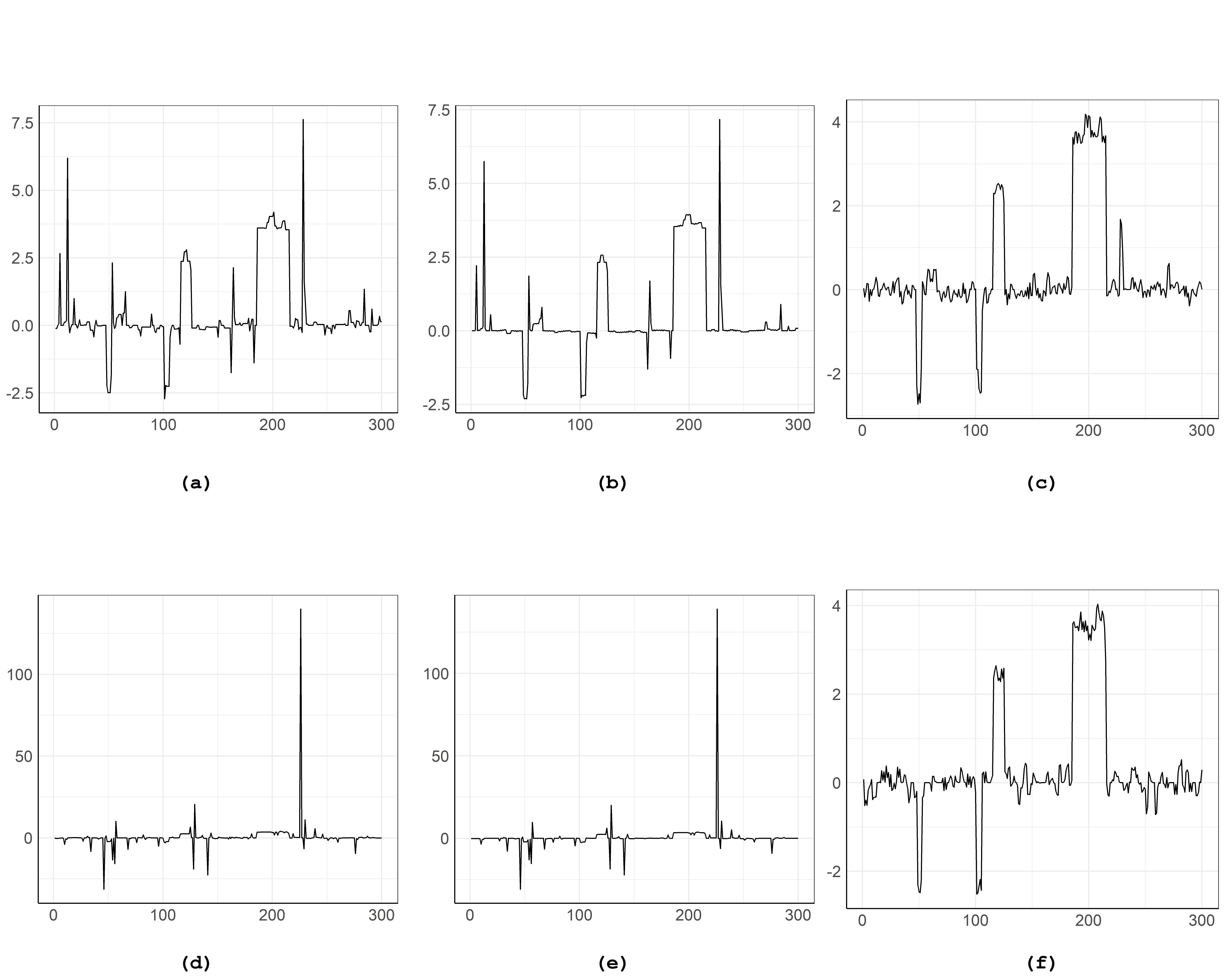}
  \caption{\small{\textcolor{red}{The recovery pulse signals contaminated by $t$ distribution and Cauchy distribution.} The above three figures are estimates of $t$-noise pulse signals, and the following three figures are estimates of Cauchy-noise pulse signals. (a) flsa package for $t$ noise, (b) ADMM for $t$ noise, (c) MLADMM for $t$ noise, (d) flsa package for Cauchy noise, (e)  ADMM for Cauchy noise, (f) MLADMM for Cauchy noise. }}
  \label{Fig3}
\end{figure}

\subsection{Real Datasets}
In the following  real datasets, the first three are microarray gene expression datasets for classification, the last dataset is brain tumor dataset for signal recovery. 
\subsubsection{Microarray gene expression dataset}
A microarray gene expression dataset includes thousands of genes (features) and dozens of observations, that is, genes selection in this section is a high dimensional problem with  $p>>n$. We implement  the proposed SFL-PSVM,  SFL-SVM in \cite{YX}  and DrSVM  in \cite{L2023}  on three microarray gene expression datasets, including \textit{colon}, \textit{duke breast} and \textit{leukemia}.  The datasets are available at \url{https://www.csie.ntu.edu.tw/~cjlin/libsvmtools/datasets/}. The concise description of three datasets is presented in Table \ref{Tab5}, with each dataset randomly partitioned into two distinct subsets, serving as training and test datasets. Table \ref{Tab6} reports the experimental results of SFL-PSVM, SFL-SVM and DrSVM  on the three datasets.

\begin{table}[H]\small
  \caption{\scriptsize{Summary of the three microarray gene expression datasets: class is the number of classes; $n$ is the size of the
training dataset; $m$ is the size of the test data; and $p$ is the dimension of the input vector.}}
  \centering
  \renewcommand{\arraystretch}{1.3}
  \setlength{\tabcolsep}{10mm}{
  \begin{tabular}{cccccccccccccc}
  \Xhline{0.8pt}
     Data set & class & $n$ & $m$ & $p$ \\
  \hline
     Colon & 2 & 31 & 31 & 2000 \\
     Duke breast & 2 & 22 & 22 & 7129 \\
     Leukemia & 2 & 38 & 34 & 7129 \\
  \Xhline{0.8pt}
  \end{tabular}}
\label{Tab5}
\end{table}

\begin{table}[h]\small
  \caption{\scriptsize{Experimental results of SFL-PSVM, SFL-SVM and DrSVM  on the real datasets. CAR, CT and NI have the same meaning as in Table \ref{Tab3};  NNC is the number of nonzero coefficients.}}
  \centering
  \renewcommand{\arraystretch}{1.3}
  \begin{tabular}{ccccccccccccc}
  \Xhline{0.8pt}
  \multirow{2}*{Dateset} & \multicolumn{4}{c}{QSF-SVM} & \multicolumn{4}{c}{SFL-SVM} & \multicolumn{4}{c}{DrSVM}\\
  \cmidrule(lr){2-5}\cmidrule(lr){6-9}\cmidrule(lr){10-13}
  &  CAR  &   CT  &NI &NNC&  CAR  &  CT   &  NI & NNC &  CAR  &  CT   &  NI &  NNC   \\ \hline
  Colon    & 0.896 & 0.629 & 5 & 21 & 0.818 & 24.33 & 312 & 182 & 0.903 & 0.549 & 7 & 498 \\ 
  Duke breast & 0.851 & 6.919 & 5 & 10 & 0.806 & 49.60 & 152 & 93 & 0.977 & 3.041 & 5 & 94 \\ 
  Leukemia   & 0.882 & 7.010 & 6 & 5  & 0.826 & 53.27 & 149 & 102 & 0.912 & 2.964 & 5 & 154 \\
  \Xhline{0.8pt}
  \end{tabular}
\label{Tab6}
\end{table}

According to Table \ref{Tab6}, our algorithm  for SFL-PSVM (MLADMM) is more effective than that of SFL-SVM  (ADMM \cite{YX}).  This indicates that pinball (quantile) loss is more applicable in microarray gene expression datasets compared to hinge loss.
 DrSVM (LADMM \cite{L2023})  has the best performance in CAR (classification accuracy rate) and CT (computation time). 
\textcolor{red}{The main reason why SFL-PSVM does not perform as well as DrSVM in terms of CAR performance may be that these classification coefficients do not have a block like structure, but rather a structure of correlation between variables exists. For  CT performance, in fact, LADMM requires fewer iterations of variables than MLADMM, and even if LADMM has more iterations, it requires less iteration time. In addition, SFL-PSVM and SFL-SVM have better sparsity compared to DrSVM, mainly due to the presence of total variation, which may compress the differences between some adjacent coefficients, which may result in coefficients with smaller absolute values estimated around 0 being compressed to 0.}

\subsubsection{Brain tumor dataset}
Our second application of QFLSA is to analyze a version of the comparative genomic hybridization (CGH) data from \cite{BBJHVRS}, which was further studied by  \cite{TP,WLY}. This version of the dataset can be found in the \textbf{cghFLasso} package in \textbf{R}. It is obvious that CGH data is a group of noisy signals with dimensions of $990 \times 1$. The dataset contains CGH measurements from 2 glioblastoma multiforme (GBM) brain tumors. CGH array experiments are usually applied to estimate each gene's DNA copy number by obtaining the $\log_2$ ratio of the number of DNA copies of the gene in the tumor cells relative to the number of DNA copies in the reference cells. Mutations to cancerous cells lead to amplification or deletions of a gene from the chromosome, so the goal of the analysis is to identify these gains or losses in the DNA copies of that gene. This dataset is visualized in Figure \ref{Fig4}. It is clear that it is not symmetrical data about the origin. For more accurate information on this dataset, one can refer to \cite{BBJHVRS}.

\begin{figure}[h]
  \centering
  \includegraphics[width=12cm,height=4cm]{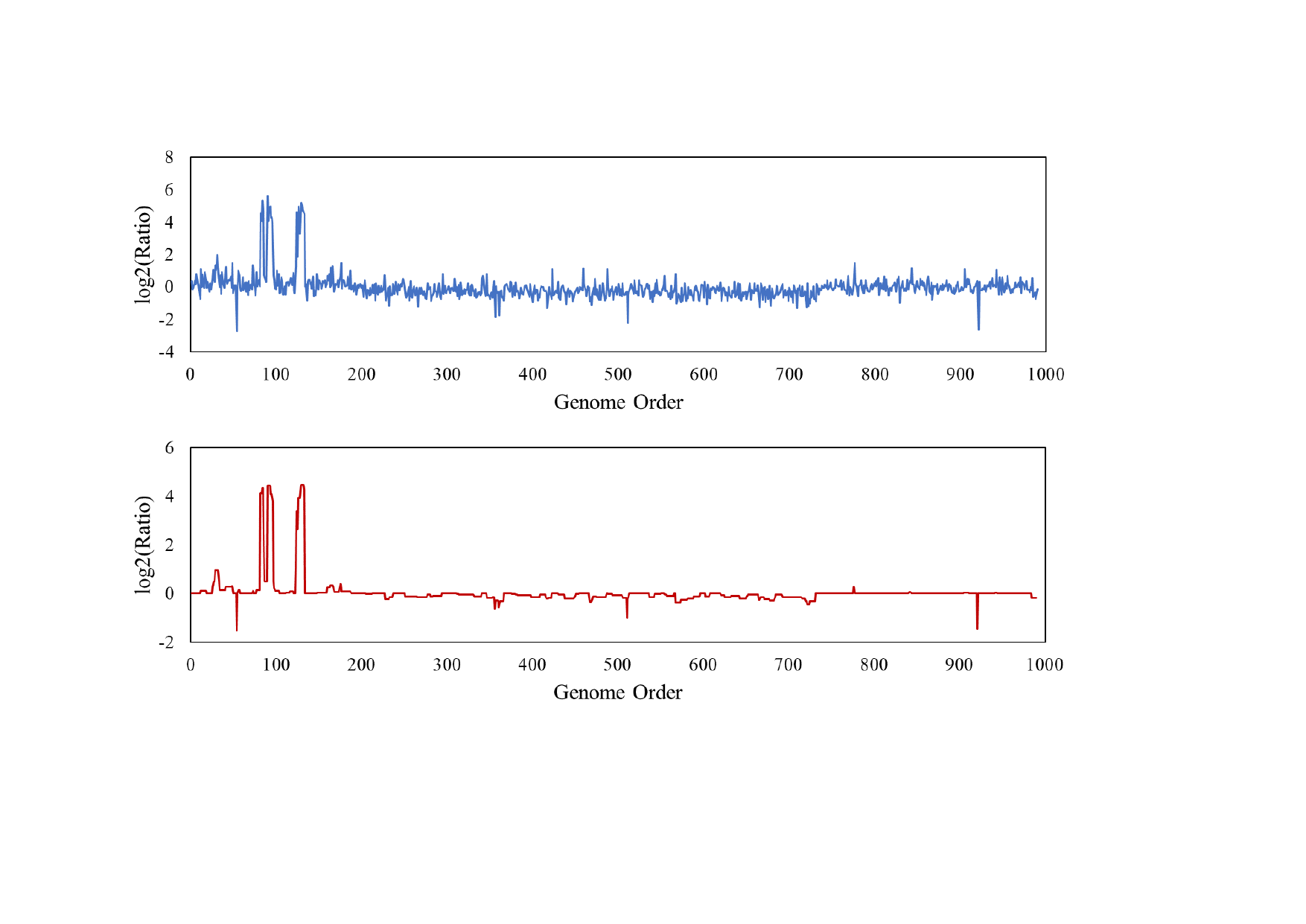}
  \caption{Original CGH data. \label{Fig4}}
  \label{Fig5}
\end{figure}
We use  FLSA  and  QFLSA ($\tau$) to fit CGH data.
\textcolor{red}{The $\tau$ parameter in QFLSA offers flexibility in handling data asymmetry, unlike FLSA, which is tailored for symmetric data due to its least squares loss function. We varied $\tau$ from 0.1 to 0.9 and used fitting  errors ($\text{FE} = \| \bm y - \hat{\bm y} \|_1 / n$) as measurement indicators, where $\hat{\bm y}$ represents the estimated values for both FLSA and QFLSA. 
\begin{figure}[h]
  \centering
  \includegraphics[width=9cm]{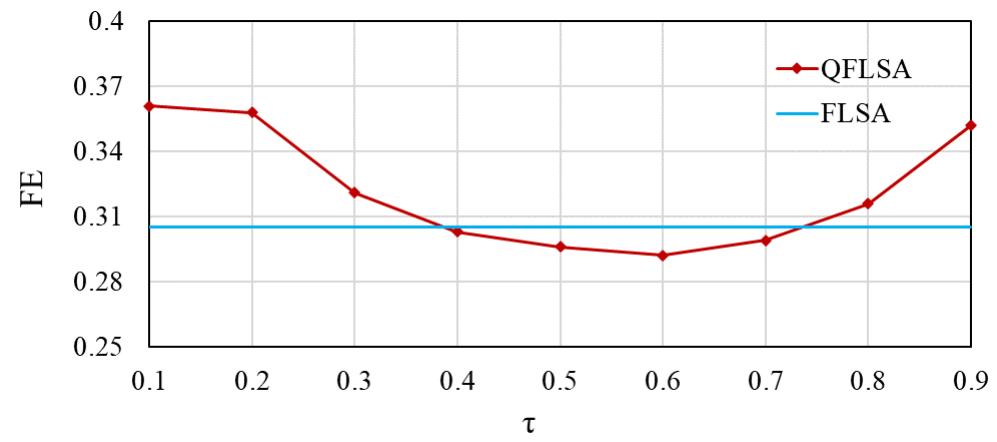}
  \caption{The fitting  errors of CGH data by FLSA and QFLSA.\label{Fig7}}
  \label{Fig6}
\end{figure}
In Figure \ref{Fig7}, we illustrate the prediction errors of FLSA and QFLSA across different $\tau$ levels. The findings show that at $\tau=0.4$, $0.5$, $0.6$, and $0.7$, QFLSA prediction error is consistently smaller than FLSA, indicating QFLSA superior fit for asymmetric data.
Additionally, in Figure \ref{Fig5}, we depict the fitting results of QFLSA on GCH data at $\tau=0.6$, showcasing its sparsity and blocky characteristics.}

\begin{figure}[h]
  \centering
  \includegraphics[width=12cm,height=4cm]{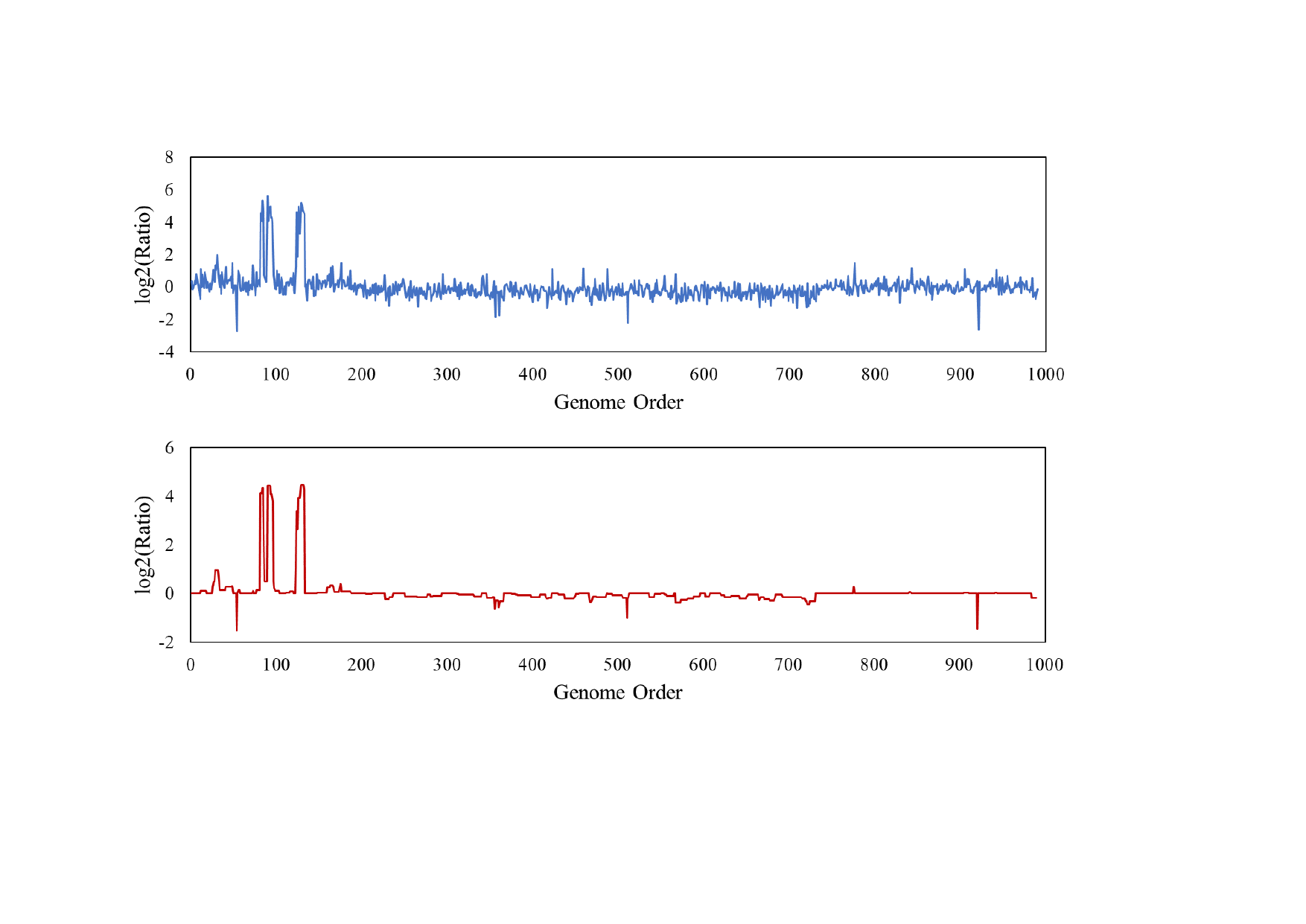}
  \caption{The fitting of CGH data by QFLSA.\label{Fig5}}
  \label{Fig6}
\end{figure}

\section{Conclusion}\label{sec7}
\textcolor{red}{In this paper,  we combine the quantile (pinball) loss and sparse fused Lasso regularization to propose a novel sparse classification model (SFL-PSVM), which is a generalization of the traditional sparse fused Lasso SVM (SFL-SVM). Compared to  the traditional  SFL-SVM classification model, SFL-PSVM exhibits insensitivity to decision boundary noise due to the pinball loss.  In the specific application of high-dimensional data classification, if the coefficients are  sparse and blocky, SFL-PSVM will be a good and robust alternative to SFL-SVM.
In addition, we find that there is a unified optimization form for quantile regression with sparse fused Lasso (SFL-REG) between SFL-PSVM.}

\textcolor{red}{To tackle the unified optimization problem, we introduce the multi-block linearized alternating direction method of multipliers (MLADMM). This algorithm significantly reduces computational complexity compared to similar methods for solving sparse fused Lasso models, especially in high-dimensional scenarios. In numerical experiments, while maintaining estimation accuracy, our computation time is slashed by approximately 40\% compared to the multi-block ADMM calculation of the SFL-REG model. Moreover, compared to the multi-block ADMM algorithm for SFL-SVM computation, our algorithm achieves a time reduction of at least 6-fold. We also demonstrate the convergence of our proposed algorithm and derived its linear convergence rates. Additionally, our algorithm is versatile, capable of extending to solve various existing and future classification and regression problems incorporating sparse fused Lasso, including hinge loss, least squares loss, square root loss, and $\varepsilon$-insensitive loss functions. We anticipate that the flexibility and efficiency of this algorithm will have significant implications in engineering, particularly in domains necessitating sparse fused Lasso regularization, such as signal processing for images and audio.}

\textcolor{red}{Our algorithm also has some limitations, and we will discuss these limitations below and improve them in future work.} The twin support vector machine, positioned as an alternative to traditional SVMs, offers theoretical advantages in terms of both computational speed and accuracy. Our algorithm  requires some modifications to tackle twin support vector machines with the pinball loss function.
Moreover, the difference of convex (DC) method facilitates the transformation of certain nonconvex penalty terms into the difference between two convex functions. Our forthcoming research endeavors aim to extend the DC method within MLADMM algorithms to accommodate nonconvex losses and/or regularizations.
In recent years, distributed and parallel ADMM algorithms have demonstrated efficacy in addressing large-scale regression problems, showcasing promising performance in numerical experiments. Extending the parallel multi-ADMM approach to sparse fused Lasso problems presents an appealing prospect for managing extensive sample data.

\section*{CRediT authorship contribution statement}
Xiaofei Wu: Conceptualization, Methodology, Software, Data curation, Visualization, Writing-Original draft preparation. Rongmei Liang: Conceptualization, Data curation, Visualization, Writing-Reviewing and Editing. Zhimin Zhang: Conceptualization, Writing-Reviewing and Editing. Zhenyu Cui: Conceptualization, Supervision, Writing-Reviewing and Editing, Funding acquisition. All authors have read and agreed to the published version of the manuscript.

\section*{Acknowledgements}
We appreciate the insightful and constructive comments and advice from the associate editor and three anonymous reviewers, which greatly improve the quality of this paper. Specifically, we are deeply grateful to Professor Bingsheng He from Nanjing University for his numerous insightful discussions, which greatly inspired and encouraged us to utilize ADMM algorithms for solving statistical learning problems. The research of Zhimin Zhang was supported by the National Natural Science Foundation of China [Grant Numbers 12271066, 12171405, 11871121], and the research of Xiaofei Wu and Rongmei Liang was supported by the Scientific and Technological Research Program of Chongqing Municipal Education Commission [Grant Numbers KJQN202302003].

\begin{footnotesize}

\end{footnotesize}

\newpage
\section*{Supplementary materials}
\appendix
\section{Supplementary numerical experiments}\label{C}
One reviewer suggested adding noise other than Gaussian noise at the decision boundary of the classification, while another reviewer recommended incorporating some additional estimation  metrics. Next, we supplement with some suggested experiments to enhance the completeness of this paper. For noise, we have added the effects of $t (2)$ distribution, Cauchy distribution, mixed normal, and lognormal on the SVM decision function.  Considering that most of these errors have the property of origin symmetry, the $\tau$ parameter of SFL-PSVM here is selected as 0.5. For estimation metrics, we have added evaluation metrics such as mean square error (MSE), classification accuracy rate (CAR), false negatives (FN, where non-zero coefficients are estimated to be 0), false positives (FP, where zero coefficients are not estimated to be 0), calculation time (CT), and number of iterations (NI). The data in the following two tables are the average values of 100 independent numerical experiments.

In Table \ref{tab7}, we present supplementary numerical experiments on other types of noise for Example 2 in Section \ref{sec41}. Note that $\alpha$ is the proportion of noise to the total sample data. To conserve space, we only include cases with $n = 200$ and the application of $t(2)$ distribution and Cauchy distribution noise in the table. Results for other situations not presented here are similar. For the estimation of $\beta_0$ (true value is 0) and $\bm\beta$ (true value is 2.5), SFL-PSVM  shows smaller bias compared to SFL-SVM estimation. Regarding MSE and CAR, which indicate estimation and prediction effects, SFL-PSVM demonstrates certain advantages over SFL-SVM. Additionally, the CT and NI metrics of SFL-PSVM are significantly better than those of SFL-SVM. These numerical results indicate that SFL-PSVM is more effective and robust than SFL-SVM when dealing with low dimensional data in the presence of noise at the decision boundary.
\begin{table}[!ht]\small
    \caption{\small{Comparison of two SFL-SVMs low dimensional data classification under $t(2)$ and Cauchy noise.}}
   \centering
   \renewcommand{\arraystretch}{1.3}
   \setlength{\tabcolsep}{3mm}{
   \begin{tabular}{cllllllllllll}
    \hline
    t(2) & Method & $\beta_0$ & $ \beta_1$ & MSE & CAR & CT & NI\\ \hline
    \multirowcell{2}{$\alpha = 5\%$} & SFL-SVM & 0.072 & 2.345 & 0.014 & 0.973 & 1.121 & 57   \\ 
        & SFL-PSVM &  0.050 &  2.381 &  0.010 &  0.995 &  0.113 &  12  \\ 
    \multirowcell{2}{$\alpha = 10\%$} & SFL-SVM & -0.093 & 2.194 & 0.052 & 0.963 & 1.139 & 62 \\ 
        & SFL-PSVM & 0.062 & 2.356 & 0.012 & 0.991 & 0.112 & 10 \\ 
    \multirowcell{2}{$\alpha = 20\%$} & SFL-SVM & 0.098 & 2.110 & 0.083 & 0.958 & 1.248 & 69 \\ 
        & SFL-PSVM & 0.059 & 2.312 & 0.016 & 0.987 & 0.114 & 11 \\ \hline
    Cauchy & Method & $\beta_0$ & $\beta_1$ & MSE & CAR & CT & NI \\ \hline
    \multirowcell{2}{$\alpha = 5\%$} & SFL-SVM & 0.077 & 2.339 & 0.016 & 0.970 & 1.132 & 52 \\ 
        & SFL-PSVM & 0.047 & 2.371 & 0.009 & 0.991 & 0.115 & 11  \\
    \multirowcell{2}{$\alpha = 10\%$} & SFL-SVM & 0.079 & 2.165 & 0.061 & 0.965 & 1.247 & 58\\
        & SFL-PSVM & 0.045 & 2.356 & 0.012 & 0.984 & 0.127 & 13 \\
    \multirowcell{2}{$\alpha = 20\%$} & SFL-SVM & 0.085 & 2.158 & 0.091 & 0.960 & 1.309 & 71 \\
        & SFL-PSVM & 0.051 & 2.299 & 0.018 & 0.981 & 0.121 & 12\\ \hline
    \end{tabular}}
    \label{tab7}
\end{table}

In Table \ref{tab8}, we present supplementary numerical experiments for Example 3 in Section \ref{sec41}. To conserve space, we only include cases with $n = 100, m = 500, p=10000$ and the application of $t(2)$ distribution and Cauchy distribution noise in the table. Results for other situations not presented here are similar. In addition to the existing CAR, CT, NI  in Table \ref{Tab3}, We have added FP and FN indicators instead of NTSF to better demonstrate the variable selection ability of the two SFL-SVMs. The results of Table \ref{tab8} indicate that, SFL-PSVM has better variable selection ability than SFL-SVM in ultrahigh dimensional data under various noises. On the test dataset, SFL-PSVM also has better prediction accuracy compared to SFL-SVM. In addition, SFL-PSVM has a significant advantage over SFL-SVM in terms of iteration times, resulting in less computation time. These numerical results indicate that SFL-PSVM is more effective and robust than SFL-SVM when dealing with  ultrahigh dimensional data in the presence of noise at the decision boundary.

\begin{table}[!ht]\small
  \centering
  \caption{\small{Comparison of two SFL-SVMs ultrahigh dimensional data classification under $t$ and Cauchy noise.}}
  \renewcommand{\arraystretch}{1.3}
  \label{tab8}
  \setlength{\tabcolsep}{5mm}{
  \begin{tabular}{cllllll}
    \hline
    t(2) & Method & FN & FP & CAR & CT & NI  \\ \hline
    \multirowcell{2}{$\alpha = 5\%$} & SFL-SVM & 0.13 & 0.05 & 0.951 & 36.10 & 93  \\ 
        & SFL-PSVM & 0.00 & 0.00 & 0.963 & 11.23 & 7  \\ 
    \multirowcell{2}{$\alpha = 10\%$} & SFL-SVM & 0.25 & 0.07 & 0.944 & 40.12 & 101  \\
        & SFL-PSVM & 0.00 & 0.00 & 0.961 & 12.33 & 8  \\ 
    \multirowcell{2}{$\alpha = 20\%$} & SFL-SVM & 0.37 & 0.11 & 0.939 & 46.33 & 109  \\ 
        & SFL-PSVM & 0.00 & 0.00 & 0.960 & 11.19 & 7  \\ \hline
    Cauchy & Method & FN & FP & CAR & CT & NI  \\ \hline
    \multirowcell{2}{$\alpha = 5\%$} & SFL-SVM & 0.18 & 0.10 & 0.930 & 36.55 & 96  \\ 
        & SFL-PSVM & 0.00 & 0.00 & 0.959 & 12.47 & 8  \\ 
    \multirowcell{2}{$\alpha = 10\%$} & SFL-SVM & 0.32 & 0.19 & 0.922 & 41.09 & 103  \\ 
        & SFL-PSVM & 0.00 & 0.00 & 0.951 & 12.33 & 8  \\ 
    \multirowcell{2}{$\alpha = 20\%$} & SFL-SVM & 0.49 & 0.36 & 0.918 & 47.85 & 112  \\ 
        & SFL-PSVM & 0.00 & 0.00 & 0.943 & 13.47 & 9  \\ \hline
    \end{tabular}}
\end{table}

\section{Proofs of Convergence Theorems}\label{A}
Recalling the  constrained optimization (\ref{qflasso_con1}), it is clear that the object function consists of three individual blocks. Take $\theta_1(\tilde{\boldsymbol\beta})=\lambda_1 \|\boldsymbol \beta \|_1$,  $\theta_2(\boldsymbol b)=\lambda_2 \|\boldsymbol b\|_1$ and  $\theta_3(\boldsymbol r)= \frac{1}{n}\sum\limits_{i = 1}^n {\rho_\tau({\boldsymbol{r}_i})}$.  Then, the constrained optimization (\ref{qflasso_con1}) can be rewritten as
\begin{equation}\label{threeblock}
\begin{array}{l}
\min \limits_{\boldsymbol{\beta,b,r}}\left\{ \theta_1(\tilde{\boldsymbol \beta})+\theta_2(\boldsymbol b)+\theta_3(\boldsymbol r)\right\} \\
\text{s.t.}\boldsymbol{A_1}\tilde{\bm \beta}+\boldsymbol{A_2 b}+\boldsymbol{A_3 r}=\boldsymbol c,
\end{array}
\end{equation}
where $\tilde{\boldsymbol \beta}=(\beta_0,\boldsymbol \beta^\top)^\top$ and $${\boldsymbol{A}_1} = \left[\begin{array}{*{20}{c}}
\boldsymbol \gamma & \boldsymbol{X}\\
\boldsymbol{0}_{p-1} & \boldsymbol{F}
\end{array} \right],{\boldsymbol{A}_2} =\left[\begin{array}{*{20}{c}}
\boldsymbol{0}\\
-\boldsymbol{I}_{p-1}
\end{array} \right],{\boldsymbol{A}_3} = \left[\begin{array}{*{20}{c}}
\boldsymbol{I}_n\\
\boldsymbol{0}
\end{array} \right],\boldsymbol{e} = \left[\begin{array}{*{20}{c}}
\bar{\boldsymbol{y}}\\
\boldsymbol{0}_{p-1}
\end{array} \right].$$
It is easy to verify that
\begin{equation}\label{ort-con2}
\boldsymbol{A_2}^\top \boldsymbol{A_3}=\boldsymbol 0. 
\end{equation}
For convenience of description, we take that $\mu=\mu_1=\mu_2$ and $\bm z = (\boldsymbol{u}^\top,\boldsymbol{v}^\top)^\top$.
Take  $\boldsymbol{G}=\eta \boldsymbol{I}_{p+1}-\tilde{{\boldsymbol{X}}}^\top \tilde{\boldsymbol{X}} $, where $\eta>\rho(\tilde{{\boldsymbol{X}}}^\top \tilde{\boldsymbol{X}})$ and $\rho(\tilde{{\boldsymbol{X}}}^\top \tilde{\boldsymbol{X}})$ represents the maximum eigenvalue of $\tilde{{\boldsymbol{X}}}^\top \tilde{\boldsymbol{X}}$. Then, the subproblems of LADMM can be written as
\begin{small}
\begin{equation}\label{3priupdate}
\left\{\begin{array}{l}
\tilde{\boldsymbol{\beta}}^{k+1}=\mathop{\arg \min }\limits_{\tilde {\boldsymbol \beta}} \left\{ \theta_1(\tilde{\boldsymbol{\beta}})- (\boldsymbol{z}^{k})^\top \boldsymbol{A_1}\tilde{\boldsymbol{\beta}}+{\mu \over 2}\|\boldsymbol{A_1}\tilde{\boldsymbol{\beta}}+\boldsymbol{A_2}\boldsymbol{b}^{k}+\boldsymbol{A_3}\boldsymbol{r}^{k}-\boldsymbol c  \|_2^2   \right\}+ {1 \over 2}\| \tilde{\boldsymbol{\beta}}-\tilde{\boldsymbol{\beta}}^k \|_{\boldsymbol{G}}^2, \\
(\boldsymbol{b}^{k+1}, \boldsymbol{r}^{k+1})= \mathop{\arg \min }\limits_{\boldsymbol{b},\boldsymbol{r}} \left\{\theta_2(\boldsymbol b)+\theta_3(\boldsymbol r) - (\boldsymbol{z}^{k})^\top(\boldsymbol{A_2}\boldsymbol{b}+\boldsymbol{A_3}\boldsymbol{r}) +{\mu \over 2}\|\boldsymbol{A_1}\boldsymbol{\beta}^{k+1}+\boldsymbol{A_2}\boldsymbol{b}+\boldsymbol{A_3}\boldsymbol{r} -\boldsymbol c  \|_2^2   \right\}, \\ 
\boldsymbol{z}^{k+1}=\boldsymbol{z}^{k}-\mu(\boldsymbol{A_1}\tilde{\boldsymbol{\beta}}^{k+1}+\boldsymbol{A_2}\boldsymbol{b}^{k+1}+\boldsymbol{A_3}\boldsymbol{r}^{k+1} -\boldsymbol c).
\end{array}\right.
\end{equation}
\end{small}
Clearly,  (\ref{3priupdate}) is a specific application of the original two-block ADMM by regarding $(\boldsymbol{b}^\top,\boldsymbol{r}^\top)^\top$ as one variable, $[ \boldsymbol{A_2},\boldsymbol{A_3}]$ as one matrix and $\theta_2(\boldsymbol{b})+\theta_3(\boldsymbol{r})$ as one function. Readers can refer to (\ref{primalupdate}) for more intuitive understanding. Existing convergence results for the original two-block ADMM such as those in \cite{BY,BY2}, thus hold for the special case (\ref{3priupdate}) with the orthogonality condition. For completeness, we next give a detailed proof. 
Let $\boldsymbol{w}=(\tilde{\boldsymbol{\beta}}^\top,\boldsymbol{b}^\top,\boldsymbol{r}^\top,\boldsymbol{z})^\top$, $\boldsymbol{M}=[\boldsymbol{A_2},\boldsymbol{A_3}]$, and $n_1=n+p-1$, then the convergence of $\boldsymbol{w}$ can  be summarized in the following proposition.
\begin{prop}\label{prop8}
Assume that $\boldsymbol{w}^{k+1}$ is the new iteration point generated by  Algorithm \ref{alg1}, then we have $ (\boldsymbol{w}^{k+1}-\boldsymbol{w}^* )^\top \boldsymbol{H} (\boldsymbol{w}^{k}-\boldsymbol{w}^{k+1} ) \ge 0$. The following inequality holds naturally  
\begin{equation}\label{iee}
\| \boldsymbol{w}^{k+1}-\boldsymbol{w}^* \|_{\boldsymbol{H}}^{2} \le \| \boldsymbol{w}^{k}-\boldsymbol{w}^* \|_{\boldsymbol{H}}^{2}+ \| \boldsymbol{w}^{k}-\boldsymbol{w}^{k+1} \|_{\boldsymbol{H}}^{2},
\end{equation}
where $\boldsymbol{\vartheta}^*$ is an any optimal value of (\ref{3priupdate}) and $\boldsymbol{H} = \left[ {\begin{array}{*{20}{c}}
\boldsymbol{G}&\boldsymbol{0}&\boldsymbol{0}\\
\boldsymbol{0}&{\mu \boldsymbol{M}^\top{\boldsymbol{M}}}&\boldsymbol{0}\\
\boldsymbol{0}&\boldsymbol{0}&{\frac{1}{\mu }{\boldsymbol{I}_{{n_1}}}}
\end{array}} \right].$
\end{prop}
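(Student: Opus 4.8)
The plan is to recast the seemingly three-block iteration as a genuine two-block linearized ADMM and then run the variational-inequality (VI) machinery of He and Yuan \cite{BY,BY2}. First I would merge the two orthogonal blocks: set $\boldsymbol{s}=(\boldsymbol{b}^\top,\boldsymbol{r}^\top)^\top$ and $\boldsymbol{M}=[\boldsymbol{A_2},\boldsymbol{A_3}]$, and regard $\theta_2(\boldsymbol{b})+\theta_3(\boldsymbol{r})$ as a single convex function of $\boldsymbol{s}$. Because $\boldsymbol{A_2}^\top\boldsymbol{A_3}=\boldsymbol{0}$ by (\ref{ort-con2}), the $\boldsymbol{b}$--$\boldsymbol{r}$ cross term in $\tfrac{\mu}{2}\|\boldsymbol{M}\boldsymbol{s}+\boldsymbol{A_1}\tilde{\boldsymbol\beta}^{k+1}-\boldsymbol{e}\|_2^2$ vanishes, so the joint $\boldsymbol{s}$-subproblem decouples into exactly the independent $\boldsymbol{b}$- and $\boldsymbol{r}$-updates that Algorithm \ref{alg1} performs; hence (\ref{3priupdate}) is a bona fide two-block scheme with constraint matrices $\boldsymbol{A_1}$ and $\boldsymbol{M}$. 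I would then characterize any optimal $\boldsymbol{w}^*$ of (\ref{threeblock}) as a solution of the VI $\theta(\boldsymbol{u})-\theta(\boldsymbol{u}^*)+(\boldsymbol{w}-\boldsymbol{w}^*)^\top F(\boldsymbol{w}^*)\ge 0$, where $\boldsymbol{u}=(\tilde{\boldsymbol\beta}^\top,\boldsymbol{b}^\top,\boldsymbol{r}^\top)^\top$ and $F(\boldsymbol{w})=(-\boldsymbol{A_1}^\top\boldsymbol{z},-\boldsymbol{A_2}^\top\boldsymbol{z},-\boldsymbol{A_3}^\top\boldsymbol{z},\ \boldsymbol{A_1}\tilde{\boldsymbol\beta}+\boldsymbol{A_2}\boldsymbol{b}+\boldsymbol{A_3}\boldsymbol{r}-\boldsymbol{e})$. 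The linear part of $F$ is skew-symmetric, yielding the identity $(\boldsymbol{w}_1-\boldsymbol{w}_2)^\top(F(\boldsymbol{w}_1)-F(\boldsymbol{w}_2))=0$, which I reserve for the final step.

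Next I would write the first-order optimality of each subproblem in (\ref{3priupdate}) as a VI evaluated at $\boldsymbol{w}^{k+1}$. For the linearized $\tilde{\boldsymbol\beta}$-update the proximal choice $\boldsymbol{G}=\eta\boldsymbol{I}_{p+1}-\tilde{\boldsymbol{X}}^\top\tilde{\boldsymbol{X}}=\eta\boldsymbol{I}_{p+1}-\mu\boldsymbol{A_1}^\top\boldsymbol{A_1}$ exactly cancels the quadratic $\mu\boldsymbol{A_1}^\top\boldsymbol{A_1}$ curvature. The crucial bookkeeping is to use the dual recursion $\boldsymbol{z}^{k+1}=\boldsymbol{z}^{k}-\mu(\boldsymbol{A_1}\tilde{\boldsymbol\beta}^{k+1}+\boldsymbol{M}\boldsymbol{s}^{k+1}-\boldsymbol{e})$ to rewrite every $\boldsymbol{z}^{k}$ as $\boldsymbol{z}^{k+1}$, so that the three subproblem VIs align with $F(\boldsymbol{w}^{k+1})$; the orthogonality $\boldsymbol{A_2}^\top\boldsymbol{A_3}=\boldsymbol{0}$ is what lets me promote $\boldsymbol{A_2}^\top(\boldsymbol{A_1}\tilde{\boldsymbol\beta}^{k+1}+\boldsymbol{A_2}\boldsymbol{b}^{k+1}-\boldsymbol{e})$ to the full residual $\boldsymbol{A_2}^\top(\cdots+\boldsymbol{A_3}\boldsymbol{r}^{k+1}-\boldsymbol{e})$ (and likewise for $\boldsymbol{A_3}$). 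Summing the $\tilde{\boldsymbol\beta}$-, $\boldsymbol{b}$-, $\boldsymbol{r}$-VIs with the trivial dual identity produces a prediction inequality $\theta(\boldsymbol{u})-\theta(\boldsymbol{u}^{k+1})+(\boldsymbol{w}-\boldsymbol{w}^{k+1})^\top F(\boldsymbol{w}^{k+1})\ge(\boldsymbol{w}-\boldsymbol{w}^{k+1})^\top\boldsymbol{Q}(\boldsymbol{w}^{k}-\boldsymbol{w}^{k+1})$, where $\boldsymbol{Q}$ carries $\boldsymbol{G}$ on the $\tilde{\boldsymbol\beta}$-block, $\tfrac1\mu\boldsymbol{I}_{n_1}$ on the $\boldsymbol{z}$-block, and an off-diagonal coupling $-\mu\boldsymbol{A_1}^\top\boldsymbol{M}$ inherited from the staleness of $\boldsymbol{s}^{k}$ in the $\tilde{\boldsymbol\beta}$-subproblem.

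The main obstacle is converting this nonsymmetric $\boldsymbol{Q}$ into the symmetric positive semidefinite $\boldsymbol{H}$ of the statement, whose $\boldsymbol{s}$-block is $\mu\boldsymbol{M}^\top\boldsymbol{M}$ rather than zero. Following He and Yuan, I would reorganize the cross term $-\mu\boldsymbol{A_1}^\top\boldsymbol{M}(\boldsymbol{s}^{k}-\boldsymbol{s}^{k+1})$ by re-invoking the dual recursion (the primal residual equals $-\tfrac1\mu(\boldsymbol{z}^{k+1}-\boldsymbol{z}^{k})$) together with the $\boldsymbol{s}$-optimality at consecutive iterations; this simultaneously supplies the missing $\mu\boldsymbol{M}^\top\boldsymbol{M}$ curvature and symmetrizes $\boldsymbol{Q}$ into $\boldsymbol{H}$, up to a nonnegative remainder. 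Positive semidefiniteness of $\boldsymbol{H}$ is then transparent: $\boldsymbol{G}\succ\boldsymbol{0}$ since $\eta>\rho(\tilde{\boldsymbol{X}}^\top\tilde{\boldsymbol{X}})$, while $\mu\boldsymbol{M}^\top\boldsymbol{M}\succeq\boldsymbol{0}$ and $\tfrac1\mu\boldsymbol{I}_{n_1}\succ\boldsymbol{0}$. With the prediction inequality now holding with $\boldsymbol{H}$, I set $\boldsymbol{w}=\boldsymbol{w}^*$, add the VI at $\boldsymbol{w}^*$ tested against $\boldsymbol{w}^{k+1}$, and invoke the skew-symmetry identity to annihilate the $F$-terms, leaving precisely $(\boldsymbol{w}^{k+1}-\boldsymbol{w}^*)^\top\boldsymbol{H}(\boldsymbol{w}^{k}-\boldsymbol{w}^{k+1})\ge 0$, which is the first assertion.

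Finally, inequality (\ref{iee}) is immediate from the $\boldsymbol{H}$-inner-product expansion $\|\boldsymbol{w}^{k}-\boldsymbol{w}^*\|_{\boldsymbol{H}}^2=\|\boldsymbol{w}^{k+1}-\boldsymbol{w}^*\|_{\boldsymbol{H}}^2+2(\boldsymbol{w}^{k+1}-\boldsymbol{w}^*)^\top\boldsymbol{H}(\boldsymbol{w}^{k}-\boldsymbol{w}^{k+1})+\|\boldsymbol{w}^{k}-\boldsymbol{w}^{k+1}\|_{\boldsymbol{H}}^2$: dropping the nonnegative middle term yields the stronger contraction $\|\boldsymbol{w}^{k+1}-\boldsymbol{w}^*\|_{\boldsymbol{H}}^2\le\|\boldsymbol{w}^{k}-\boldsymbol{w}^*\|_{\boldsymbol{H}}^2-\|\boldsymbol{w}^{k}-\boldsymbol{w}^{k+1}\|_{\boldsymbol{H}}^2$, from which the (weaker) plus-sign bound stated in the proposition follows a fortiori; this stronger form is also the one I would carry forward to establish the $O(1/k)$ non-ergodic rate of Theorem \ref{TH1}.
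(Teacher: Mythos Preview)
Your proposal is correct and follows essentially the same route as the paper: merge $(\boldsymbol{b},\boldsymbol{r})$ into a single block via the orthogonality (\ref{ort-con2}), write the first-order VIs for each subproblem, sum them, absorb the stale-$\boldsymbol{\vartheta}^{k}$ cross term $\mu\boldsymbol{A_1}^\top\boldsymbol{M}(\boldsymbol{\vartheta}^{k}-\boldsymbol{\vartheta}^{k+1})$ by adding $\mu\boldsymbol{M}^\top\boldsymbol{M}(\boldsymbol{\vartheta}^{k}-\boldsymbol{\vartheta}^{k+1})$ and then bounding the resulting $(\boldsymbol{z}^{k}-\boldsymbol{z}^{k+1})^\top\boldsymbol{M}(\boldsymbol{\vartheta}^{k}-\boldsymbol{\vartheta}^{k+1})\ge 0$ via the $\boldsymbol{\vartheta}$-optimality at consecutive iterations, and finally use skew-symmetry of $F$. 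Your observation that the argument actually yields the stronger minus-sign contraction (which the paper's proof also derives, despite the plus sign in the stated inequality (\ref{iee})) is correct.
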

\begin{proof}
For convenience, take $\boldsymbol{\vartheta}=(\boldsymbol{b}^\top,\boldsymbol{r}^\top)^\top$, $\tilde{\theta}_2(\bm \vartheta)= \theta_2(\bm b)+\theta_3(\bm r)$ and $\boldsymbol{\tilde{\vartheta}}=(\tilde{\boldsymbol{\beta}}^\top, \boldsymbol{b}^\top,\boldsymbol{r}^\top)^\top$, $\tilde{\theta}(\tilde {\bm \vartheta})=\theta_1(\tilde{\bm \beta}) + \theta_2(\bm b)+\theta_3(\bm r)$. The subproblems of LADMM in (\ref{3priupdate})  can be reformulated as
\begin{equation}\label{th4op}
\left\{ \begin{array}{l}
\tilde{\boldsymbol{\beta}}^{k+1}= \mathop {\arg \min }\limits_{\tilde{\boldsymbol \beta}}\left\{\theta_1(\tilde{\boldsymbol{\beta}})+{\mu \over 2}\|\boldsymbol{A_1}\tilde{\boldsymbol{\beta}}+\boldsymbol{M}\boldsymbol{\vartheta}^k-\boldsymbol{c}-\boldsymbol{z}^k/\mu  \|_2^2+ {1 \over 2}\|\boldsymbol{\beta}-\boldsymbol{\beta}^k \|_{\boldsymbol{G}}^2  \right\},\\%
\boldsymbol{\vartheta}^{k+1}=\mathop {\arg \min }\limits_{\boldsymbol \vartheta}\left\{ \tilde{\theta}_2(\boldsymbol{\vartheta})+{\mu \over 2}\|\boldsymbol{A_1}\tilde{\boldsymbol{\beta}}^{k+1}+\boldsymbol{M}\boldsymbol{\vartheta}-\boldsymbol{c}-\boldsymbol{z}^k/\mu \|_2^2 \right\},\\%
\boldsymbol{z}^{k+1}=\boldsymbol{z}^{k}-\mu(\boldsymbol{A_1}\boldsymbol{\beta}^{k+1}+ \boldsymbol{M}\boldsymbol{\vartheta}^{k+1} -\boldsymbol c).
\end{array}\right.
\end{equation}
For the $\tilde{\boldsymbol \beta}$-subproblem of (\ref{th4op}), it follows from the convexity of $\theta_1(\tilde{\boldsymbol{\beta}})$ that
\begin{equation}\label{th4beta}
\theta_1(\tilde{\boldsymbol{\beta}})-\theta_1(\tilde{\boldsymbol{\beta}}^{k+1})+(\tilde{\boldsymbol{\beta}}-\tilde{\boldsymbol{\beta}}^{k+1})^\top \left[\mu \boldsymbol{A_1}^{\top}(\boldsymbol{A_1}\tilde{\boldsymbol{\beta}}^{k+1}+\boldsymbol{M}\boldsymbol{\vartheta}^k-\boldsymbol{c}-\boldsymbol{z}^k/\mu )+ \boldsymbol{G}(\tilde{\boldsymbol{\beta}}^{k+1}-\tilde{\boldsymbol{\beta}}^{k}) \right] \ge 0.
\end{equation}
For the $\boldsymbol{\vartheta}$-subproblem of (\ref{th4op}), by the convexity of $\tilde{\theta}_2(\boldsymbol{\vartheta})$, we have
\begin{equation}\label{th4var}
\tilde{\theta}_2(\boldsymbol{\vartheta})-\tilde{\theta}_2(\boldsymbol{\vartheta}^{k+1})+(\boldsymbol{\vartheta}-\boldsymbol{\vartheta}^{k+1})^\top \left[ \mu \boldsymbol{M}^\top (\boldsymbol{A_1}\tilde{\boldsymbol{\beta}}^{k+1}+\boldsymbol{M}\boldsymbol{\vartheta}^{k+1}-\boldsymbol{c}-\boldsymbol{z}^k/\mu)    \right] \ge 0.
\end{equation}
For the $\boldsymbol{z}$-subproblem of (\ref{th4op}), we have
\begin{equation}\label{th4z}
(\boldsymbol{z}-\boldsymbol{z}^{k+1})^\top\left[(\boldsymbol{z}^{k+1}-\boldsymbol{z}^{k})/\mu+ (\boldsymbol{A_1}\tilde{\boldsymbol{\beta}}^{k+1}+ \boldsymbol{M}\boldsymbol{\vartheta}^{k+1} -\boldsymbol c)\right] \ge 0.
\end{equation}
Summing the above three inequalities together, we obtain
\begin{equation}\label{vi}
\begin{split}
\theta(\boldsymbol{\tilde{\vartheta}})&-\theta(\boldsymbol{\tilde{\vartheta}}^{k+1})+(\boldsymbol{w}-\boldsymbol{w}^{k+1})^\top F(\boldsymbol{w}^{k+1})+ (\tilde{\boldsymbol{\beta}}-\tilde{\boldsymbol{\beta}}^{k+1})^\top \mu \boldsymbol{A_1}^\top \boldsymbol{M}(\boldsymbol{\vartheta}^k-\boldsymbol{\vartheta}^{k+1}) \\
&+ (\tilde{\boldsymbol{\beta}}-\tilde{\boldsymbol{\beta}}^{k+1})^\top \boldsymbol G (\tilde{\boldsymbol{\beta}}^{k+1}-\tilde{\boldsymbol{\beta}}^{k})+{1 \over \mu}(\boldsymbol{z}-\boldsymbol{z}^{k+1})^\top(\boldsymbol{z}^{k+1}-\boldsymbol{z}^{k}) \ge 0,
\end{split}
\end{equation}
where $F(\bm w^{k+1})=\left[ {\begin{array}{*{20}{c}}
-\bm A_1^\top \bm z^{k+1}\\
-\bm M^\top \bm z^{k+1}\\
\boldsymbol{A_1}\tilde{\bm{\beta}}^{k+1}+\boldsymbol{M}\boldsymbol{\vartheta}^{k+1}-\boldsymbol{c}
\end{array}} \right]$.

Add $(\boldsymbol{\vartheta}-\boldsymbol{\vartheta}^{k+1})^\top \mu \boldsymbol{M}^\top \boldsymbol{M}(\boldsymbol{\vartheta}^k-\boldsymbol{\vartheta}^{k+1}) $ to both sides of inequality (\ref{vi}), then 
\begin{equation}\label{vi2}
\begin{split}
\theta(\boldsymbol{\tilde{\vartheta}})-\theta(\boldsymbol{\tilde{\vartheta}}^{k+1})+(\boldsymbol{w}-\boldsymbol{w}^{k+1})^\top F(\boldsymbol{w}^{k+1})&+ \mu {\left( \begin{array}{l}
\boldsymbol{\beta}  - {\boldsymbol{\beta} ^{k + 1}}\\
\boldsymbol{\vartheta}  - {\boldsymbol{\vartheta} ^{k + 1}}
\end{array} \right)^\top}\left( \begin{array}{l}
\boldsymbol{A_1}^\top\\
\boldsymbol{M}^\top
\end{array} \right)\boldsymbol{M}(\boldsymbol{\vartheta}^k-\boldsymbol{\vartheta}^{k+1})\\
&\ge (\boldsymbol{w}-\boldsymbol{w}^{k+1})^\top \boldsymbol{H} (\boldsymbol{w}^k-\boldsymbol{w}^{k+1}),
\end{split}
\end{equation}
where $\boldsymbol{H}= \left( {\begin{array}{*{20}{c}}
\boldsymbol{G}&\boldsymbol{0}&\boldsymbol{0}\\
\boldsymbol{0}&{\mu \boldsymbol{M}^\top{\boldsymbol{M}}}&\boldsymbol{0}\\
\boldsymbol{0}&\boldsymbol{0}&{\frac{1}{\mu }{\boldsymbol{I}_{{n_1}}}}
\end{array}} \right).$

It is clear that 
\begin{equation}\label{th4e1}
\theta(\boldsymbol{\tilde{\vartheta}}^*)-\theta(\boldsymbol{\tilde{\vartheta}}^{k+1})+(\boldsymbol{w}^*-\boldsymbol{w}^{k+1})^\top F(\boldsymbol{w}^{k+1})=\theta(\boldsymbol{\tilde{\vartheta}}^*)-\theta(\boldsymbol{\tilde{\vartheta}}^{k+1})+(\boldsymbol{w}^*-\boldsymbol{w}^{k+1})^\top F(\boldsymbol{w}^{*}) \le  0
\end{equation}
and 
\begin{equation}\label{th4e2}
\mu {\left( \begin{array}{l}
\boldsymbol{\beta}^*  - {\boldsymbol{\beta} ^{k + 1}}\\
\boldsymbol{\vartheta}^*  - {\boldsymbol{\vartheta} ^{k + 1}}
\end{array} \right)^\top}\left( \begin{array}{l}
\boldsymbol{A_1}^\top\\
\boldsymbol{M}^\top
\end{array} \right)\boldsymbol{M}(\boldsymbol{\vartheta}^k-\boldsymbol{\vartheta}^{k+1})= (\boldsymbol{z}^{k+1}-\boldsymbol{z}^{k})^\top \boldsymbol{M} (\boldsymbol{\vartheta}^k-\boldsymbol{\vartheta}^{k+1}).
\end{equation}
Let $\boldsymbol{\tilde{\vartheta}}=\boldsymbol{\tilde{\vartheta}}^*, \boldsymbol{w}=\boldsymbol{w}^*, \boldsymbol{\beta}=\boldsymbol{\beta}^*, \boldsymbol{\vartheta}=\boldsymbol{\vartheta}^*$ and $\boldsymbol{z}=\boldsymbol{z}^*$ and bring (\ref{th4e1}) and (\ref{th4e2}) into (\ref{vi2}), then we get
\begin{equation}
(\boldsymbol{w}^{k+1}-\boldsymbol{w}^{*})^\top \boldsymbol{H} (\boldsymbol{w}^k-\boldsymbol{w}^{k+1}) \ge (\boldsymbol{z}^{k}-\boldsymbol{z}^{k+1})^\top \boldsymbol{M} (\boldsymbol{\vartheta}^k-\boldsymbol{\vartheta}^{k+1}).
\end{equation}
From (\ref{th4var}), we obtain the following two inequalities
\begin{align}
\tilde{\theta}_2(\boldsymbol{\vartheta}^k)-\tilde{\theta}_2(\boldsymbol{\vartheta}^{k+1})+(\boldsymbol{\vartheta}^k-\boldsymbol{\vartheta}^{k+1})^\top (-\boldsymbol{M}^\top \boldsymbol{z}^{k+1}) \ge 0,\\
\tilde{\theta}_2(\boldsymbol{\vartheta}^{k+1})-\tilde{\theta}_2(\boldsymbol{\vartheta}^{k})+(\boldsymbol{\vartheta}^{k+1}-\boldsymbol{\vartheta}^{k})^\top (-\boldsymbol{M}^\top \boldsymbol{z}^{k}) \ge 0.
\end{align}
These two inequalities point out that
\begin{equation}
 (\boldsymbol{z}^{k}-\boldsymbol{z}^{k+1})^\top \boldsymbol{M} (\boldsymbol{\vartheta}^k-\boldsymbol{\vartheta}^{k+1}) \ge 0.
\end{equation}
Then, we obtain
\begin{equation}\label{pro6}
(\boldsymbol{w}^{k+1}-\boldsymbol{w}^{*})^\top \boldsymbol{H} (\boldsymbol{w}^k-\boldsymbol{w}^{k+1}) \ge 0.
\end{equation}
Note that for any $\boldsymbol{x}, \boldsymbol {y}$, if $\boldsymbol{y}^\top \boldsymbol{H}(\boldsymbol {x}-\boldsymbol {y}) \ge 0$, then we have 
\begin{equation}\label{key}
\|\boldsymbol{y} \|_{\boldsymbol{H}}^2 \le \|\boldsymbol{x} \|_{\boldsymbol{H}}^2- \|\boldsymbol{x}-\boldsymbol{y} \|_{\boldsymbol{H}}^2.
\end{equation}
Therefore, 
$$\| \boldsymbol{w}^{k+1}-\boldsymbol{w}^* \|_{\boldsymbol{H}}^{2} \le \| \boldsymbol{w}^{k}-\boldsymbol{w}^* \|_{\boldsymbol{H}}^{2}+ \| \boldsymbol{w}^{k}-\boldsymbol{w}^{k+1} \|_{\boldsymbol{H}}^{2}.$$ 
So far, we have completed the proof of Proposition \ref{prop8}. 
\end{proof}

Proposition \ref{prop8} has the same conclusion as Lemma 3 in \cite{LMY}, then the assertions summarized in the following corollary are trivial based on the inequality (\ref{iee}). 
\begin{cor}\label{cor1}
Let ${\bm w^k}$ be the sequence generated by Algorithm \ref{alg1}. Then we have
\vspace{-1em}
\begin{enumerate}
\item $\mathop {\lim }\limits_{k \to \infty}\|\bm w^k-\bm w^{k+1}\|_{\bm H}=0$.
\item The sequence  $\{\bm w^k\}$ is bounded.
\item For any optimal solution $\bm w^*$, the sequence $\{\|\bm w^k-\bm w^* \|_{\bm H} \}$ is monotonically non-increasing.
\end{enumerate}
\end{cor}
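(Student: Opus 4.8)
The plan is to read off all three assertions from the single descent inequality underlying Proposition \ref{prop8}, using $\|\cdot\|_{\boldsymbol H}$ as a Lyapunov quantity. The first thing I would pin down is the exact form of that inequality: substituting (\ref{pro6}) into the elementary identity (\ref{key}) with $\boldsymbol x=\boldsymbol w^{k}-\boldsymbol w^{*}$ and $\boldsymbol y=\boldsymbol w^{k+1}-\boldsymbol w^{*}$ (so that $\boldsymbol x-\boldsymbol y=\boldsymbol w^{k}-\boldsymbol w^{k+1}$ and the hypothesis $\boldsymbol y^\top \boldsymbol H(\boldsymbol x-\boldsymbol y)\ge 0$ is precisely (\ref{pro6})) produces
\[
\| \boldsymbol{w}^{k+1}-\boldsymbol{w}^* \|_{\boldsymbol{H}}^{2} \le \| \boldsymbol{w}^{k}-\boldsymbol{w}^* \|_{\boldsymbol{H}}^{2}- \| \boldsymbol{w}^{k}-\boldsymbol{w}^{k+1} \|_{\boldsymbol{H}}^{2}.
\]
It is this \emph{minus}-sign estimate, rather than the displayed (\ref{iee}), that I would take as the starting point.

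From here assertion 3 is immediate: since $\boldsymbol H$ is positive semidefinite the last term is non-negative, so $\|\boldsymbol w^{k+1}-\boldsymbol w^{*}\|_{\boldsymbol H}\le \|\boldsymbol w^{k}-\boldsymbol w^{*}\|_{\boldsymbol H}$ for every $k$ and $\{\|\boldsymbol w^{k}-\boldsymbol w^{*}\|_{\boldsymbol H}\}$ is monotonically non-increasing. For assertion 2 I would note that this sequence, being non-increasing and bounded below by $0$, converges and obeys $\|\boldsymbol w^{k}-\boldsymbol w^{*}\|_{\boldsymbol H}\le \|\boldsymbol w^{0}-\boldsymbol w^{*}\|_{\boldsymbol H}$. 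To turn boundedness in $\|\cdot\|_{\boldsymbol H}$ into genuine boundedness of $\{\boldsymbol w^{k}\}$ I would check that $\boldsymbol H$ is in fact positive definite: the block $\boldsymbol G=\eta\boldsymbol I_{p+1}-\tilde{\boldsymbol X}^\top\tilde{\boldsymbol X}$ is positive definite because $\eta>\rho(\tilde{\boldsymbol X}^\top\tilde{\boldsymbol X})$, the block $\tfrac1\mu\boldsymbol I_{n_1}$ is positive definite, and $\boldsymbol M=[\boldsymbol A_2,\boldsymbol A_3]$ is a signed permutation (so $\boldsymbol M^\top\boldsymbol M=\boldsymbol I_{n_1}$), whence $\mu\boldsymbol M^\top\boldsymbol M\succ 0$. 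Then $\|\cdot\|_{\boldsymbol H}$ is equivalent to $\|\cdot\|_2$ and the iterates are Euclidean-bounded.

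For assertion 1 I would telescope the descent estimate over $k=0,\dots,K$, obtaining
\[
\sum_{k=0}^{K}\| \boldsymbol{w}^{k}-\boldsymbol{w}^{k+1} \|_{\boldsymbol{H}}^{2}\le \| \boldsymbol{w}^{0}-\boldsymbol{w}^* \|_{\boldsymbol{H}}^{2}-\| \boldsymbol{w}^{K+1}-\boldsymbol{w}^* \|_{\boldsymbol{H}}^{2}\le \| \boldsymbol{w}^{0}-\boldsymbol{w}^* \|_{\boldsymbol{H}}^{2}.
\]
Since the partial sums are uniformly bounded in $K$, letting $K\to\infty$ shows the non-negative series $\sum_{k\ge 0}\|\boldsymbol w^{k}-\boldsymbol w^{k+1}\|_{\boldsymbol H}^{2}$ converges, and therefore its general term vanishes, i.e. $\lim_{k\to\infty}\|\boldsymbol w^{k}-\boldsymbol w^{k+1}\|_{\boldsymbol H}=0$.

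Most of this is routine---monotone-bounded convergence together with a telescoping sum---which is why the corollary is called trivial. The single point I would flag as the real obstacle is the sign in the working inequality: the plus sign as written in (\ref{iee}) is consistent with divergence and yields none of the three claims, so the argument must instead rest on the minus-sign estimate that (\ref{key}) genuinely delivers from (\ref{pro6}). A secondary bookkeeping point, needed so that assertion 2 concerns the iterates themselves and not merely their $\boldsymbol H$-images, is verifying that $\boldsymbol H$ is positive definite rather than only semidefinite.
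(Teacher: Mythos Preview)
Your proposal is correct and follows exactly the route the paper intends: the paper does not give a detailed proof but simply declares the corollary ``trivial based on the inequality (\ref{iee}),'' and your telescoping/Lyapunov argument is the standard unpacking of that remark. You are also right to flag the sign in (\ref{iee}) as a typo---the identity (\ref{key}) applied to (\ref{pro6}) yields the \emph{minus}-sign estimate you use, and your verification that $\boldsymbol M^\top\boldsymbol M=\boldsymbol I_{n_1}$ (hence $\boldsymbol H\succ 0$) is the appropriate extra check needed for assertion~2.
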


Using Corollary \ref{cor1}, like the proof of Theorem 1 in \cite{LMY}, we can prove $\bm w^k $ converges to $\bm w^*$.  The $O(1/k)$ convergence rate in a non-ergodic sense can directly be obtained from the conclusion of \cite{BY2}. Therefore, Theorem \ref{TH1} is proved.

\section{Generalization of Algorithms}\label{B}
Sparse fused Lasso (SFL) has been incorporated into various loss functions for regression tasks, including least squares (LS) \cite{TSRZ}, square root \cite{JLD}, least absolute deviations (LAD) \cite{LTZ},  and quantile loss \cite{W2023}. For classification tasks, SFL has been used with hinge loss \cite{TSRZ}. 
 Recalling the constrained optimizations (\ref{qflasso_con1}), the difference between our proposed model and  existing models is the loss function. Then, we just need to replace quantile loss function with  least squares loss function $\|\boldsymbol r\|_2^2$, hinge loss function $\sum_{i=1}^{n} \max(0, \boldsymbol{r}_i)$, square root loss function $\|\boldsymbol r\|_2$ to achieve this extention. It is clear that we only need to modify the update of $\boldsymbol{r}$-subproblem in MLADMM iteration (\ref{primalupdate}). To implement these diverse regression and classification tasks, we can follow the same steps outlined in the Appendix  C of \cite{W2023}.

Here, we extend above the proposed algorithm to solve SFL-SVR (sparse fused lasso support vector  regression). The loss function of SVR is called the $\varepsilon$-insensitive loss function, defined as follows
\begin{equation}\label{svr}
    L_{\varepsilon}( r_i) = \begin{cases}
        0 & \text{if } |r_i| \leq \varepsilon, \\
        |r_i| - \varepsilon & \text{otherwise}.
    \end{cases}
\end{equation} 
where  $\varepsilon$ is a specified tolerance value.

Before discussing modifications of the  $\boldsymbol{r}$-subproblem in (\ref{svr}), we introduce the proximal operator which help us express its conveniently. 

$\bullet$ The minimization problem 
\begin{align}\label{svr2}
\mathop {\arg \min }\limits_{{r}_i} \frac{1}{n} L_{\varepsilon}( r_i) + \frac{\mu }{2}\left\| {{r}_i -{r}_i^0} \right\|_2^2
\end{align}
with $\mu > 0$ and a given constant ${r}_i^0$, has a closed-form solution defined as
\begin{align}
r_i^*=
\begin{cases}
r_i^0, & \text{if } |r_i^0| < \varepsilon + \frac{1}{n\mu}, \\
\text{sign}({r}_i^0) \varepsilon, & \text{if } |{r}_i^0| \in [\varepsilon, \varepsilon + \frac{1}{n\mu}], \\
\text{sign}(r_i^0)\max\{0, |r_i^0| - \frac{1}{n\mu} \}, & \text{if } |r_i^0| > \varepsilon + \frac{1}{n\mu},
\end{cases}
\end{align}
\begin{proof}
(1) If $|r_i| < \varepsilon$, the objective function in (\ref{svr2}) is
\begin{equation}\label{proof1}
F(r)= \frac{\mu }{2}\left\| {{r}_i -{r}_i^0} \right\|_2^2.
\end{equation}
Then, we have 
\begin{equation}
r_i^*= {r}_i^0. 
\end{equation}
Please note that at this case, it is necessary to have $|{r}_i^0| < \varepsilon$.

(2) If  $|r_i| > \varepsilon$,  the objective function is
\begin{equation}\label{proof2}
F(r)= \frac{|r_i| - \varepsilon}{n} +  \frac{\mu }{2}\left\| {{r}_i -{r}_i^0} \right\|_2^2. 
\end{equation}
According to (\ref{opera1}), we have 
\begin{equation}\label{svr3}
{r}_i^* = \text{sign}({{r}^0_i}) \cdot \text{max}\{ 0,\left| {{{r}^0_i}} \right| - \frac{1}{n\mu} \}.
\end{equation}
In this case, it is necessary to have $|{r}_i^0| > \varepsilon +  \frac{1}{n\mu}$.

Moving forward, our primary focus will be on identifying the optimal solution within the interval $|{r}_i^0| \in [\varepsilon, \varepsilon + \frac{1}{n\mu}]$. It's worth noting that the objective functions in equations (\ref{proof1}) and (\ref{proof2}) share a quadratic form. Consequently, the minimization problem defined in equation (\ref{svr2}) only involves these two objective functions. Therefore, the optimal solution within the interval of interest must be either $ \text{sign}({r}_i^0)\varepsilon$ or $\text{sign}({r}_i^0) (\varepsilon + \frac{1}{n\mu})$. In the following discussion, we will only discuss the case where ${r}_i^0>0$, and the rest of the cases  are similar.

To determine the optimal solution, we need to compare the values of the objective functions at these two points. It is clear that $F(\varepsilon) = \frac{\mu}{2}\left\|\varepsilon - {r}_i^0\right\|_2^2$, and $F(\varepsilon + \frac{1}{n\mu}) = \frac{1}{n^2\mu} + \frac{\mu}{2}\left\|\varepsilon + \frac{1}{n\mu} - {r}_i^0\right\|_2^2$. Since it follows that $ \frac{1}{n\mu} > 0 $ and $\varepsilon - {r}_i^0 \in [ -\frac{1}{n\mu},0]$, we have $F(\varepsilon + \frac{1}{n\mu}) > F(\varepsilon)$.
Therefore, if ${r}_i^0 \in [\varepsilon, \varepsilon + \frac{1}{n\mu}]$, the optimal solution is ${r}_i^* = \varepsilon$. 

Based on the above discussion, we arrive at the following expression for $r_i^*$:
\begin{align}
r_i^*=
\begin{cases}
r_i^0, & \text{if } |r_i^0| < \varepsilon + \frac{1}{n\mu}, \\
\text{sign}({r}_i^0) \varepsilon, & \text{if } |{r}_i^0| \in [\varepsilon, \varepsilon + \frac{1}{n\mu}], \\
\text{sign}(r_i^0)\max\{0, |r_i^0| - \frac{1}{n\mu} \}, & \text{if } |r_i^0| > \varepsilon + \frac{1}{n\mu}.
\end{cases}
\end{align}
\end{proof}

Then, to solve SFL-SVR using the MLADMM algorithm, we only need to modify equation (\ref{admmu_r}) to the following equation 
\begin{align}
r_i^{k+1}=
\begin{cases}
r_0^k, & \text{if } |r_0^k| < \varepsilon + \frac{1}{n\mu}, \\
\text{sign}({r}_0^k) \varepsilon, & \text{if } |r_0^k| \in [\varepsilon, \varepsilon + \frac{1}{n\mu}], \\
\text{sign}(r_0^k)\max\{0, |r_0^k| - \frac{1}{n\mu} \}, & \text{if } |r_0^k| > \varepsilon + \frac{1}{n\mu},
\end{cases}
\end{align}
where $r_0^k = {\bar{\boldsymbol{y}}_i} - (\boldsymbol{X\beta} ^{k + 1})_i -\beta_0^{k+1} \bm \gamma_i + \frac{{\boldsymbol{u}_i^k}}{{{\mu _1}}}  - \frac{\tau }{{n{\mu _1}}} $.

Recently, Ye et al.\cite{YGS} proposed a  generic quadratic nonconvex $\varepsilon$-insensitive loss function for robust support vector regression. The function is defined as follows:
\[
L_{\varepsilon,t}(r_i)=
\begin{cases}
\left({t-\varepsilon} \right)^{2} + s|r_i| - st, & \text{if } |r_i|>t, \\
\left(|r_i|-\varepsilon\right)^2, & \text{if } \varepsilon \leq |r_i| \leq t, \\
0, & \text{if } |r_i|<\varepsilon,
\end{cases}
\]
where $r_i$ represents the training error of the $i$-th training sample, $t$ is an elastic interval parameter, and $s$ is a robustification parameter. Considering that the loss function is quadratic in nature, we can derive the closed-form solution of its proximal operator using the previously mentioned derivation method. However, it is important to note that this function is non-convex and therefore cannot be included in the proof framework of our algorithm's convergence. In our future work, we plan to design a theoretically guaranteed ADMM algorithm specifically tailored for this robust SVR case.


\begin{thebibliography}{99}
\bibitem{JFY2015}
J. Wang,  W. Fan,  J.  Ye. 
Fused lasso screening rules via the monotonicity of subdifferentials[J]. 
\emph{IEEE Trans. Pattern Anal. Mach. Intell.}, 37(2015), 1806-1820.   https://doi.org/10.1109/TPAMI.2014.2388203.

\bibitem{LLYPE2021}
L. Cui, L. Bai, Y. Wang, P.  Yu, E. Hancock. 
Fused lasso for feature selection using structural information[J].
\emph{Pattern Recognit.}, 119(2021), 108058. https://doi.org/10.1016/j.patcog.2021.108058.

%
\bibitem{WYN}
W. Wang,  M.  Yao, M. K. Ng. 
Color image multiplicative noise and blur removal by saturation-value total variation[J]. 
\emph{Appl. Math. Modell.}. 90(2021), 240-264. https://doi.org/10.1016/j.apm.2020.08.052.

\bibitem{TH}
Z. Tan, H. Yang.
Total variation regularized multi-matrices weighted Schatten p-norm minimization for image denoising[J].
\emph{Appl. Math. Modell.}. 124(2023), 518-531. https://doi.org/10.1016/j.apm.2023.08.002.

%
\bibitem{MP2023}
A. Majumder,   P. Sharma. 
Topologically overlapped fused Lasso measure for reconstructing gene regulation networks[J]. 
\emph{IETE J. Res.},  2023, 1–11. https://doi.org/10.1080/03772063.2023.2280620.

\bibitem{ZZSW2023}
Y. Zhang,  J. Zhang, Y. Sun,  J. Wang.   Change point detection in dynamic networks via regularized tensor decomposition[J]. 
\emph{J. Comput. Graphical Stat.}, 2023, 1–10. https://doi.org/10.1080/10618600.2023.2240864.

\bibitem{MAE2018}
M. Alshawaqfeh, A. AlKawam, E. Serpedin. Robust fussed Lasso model for recurrent copy number variation detection[C]. 
\emph{Int. Conf. Pattern Recognit.},  2018,  3772-3777.  https://doi.org/10.1109/ICPR.2018.8545779. 

\bibitem{WLY}
 X. Wu,  R. Liang,   H. Yang. 
Penalized and constrained LAD estimation in fixed and high dimension[J].
\emph{Stat. Pap.}, 63(2022), 53-95. https://doi.org/10.1007/s00362-021-01229-0.

\bibitem{MCH2021}
R. Mao,    Z. Chen, G. Hu. Robust temporal low‐rank representation for traffic data recovery via fused lasso[J]. 
\emph{IET Intel. Transport Syst.}, 15(2021), 175-186 https://doi.org/10.1049/itr2.12010.

\bibitem{GLY2024}
R. Gao, X. Liu, J. Yang,  H. Yue. Multi-Channel Fused Lasso for Motion Detection in Dynamic Video Scenarios[J]. 
\emph{IEEE Trans. Consum. Electron.}, 70(2024), 496 - 508. https://doi.org/10.1109/TCE.2023.3341908.
\bibitem{M2019}
Mohammadi, M.  A compact neural network for fused lasso signal approximator[J]. 
\emph{IEEE Trans. Cybern.}, 99(2019), 1-10. https://doi.org/10.1109/TCYB.2019.2925707.

\bibitem{LSFX2021}
L. Yang, S. Ding, F. Zhou, X. Yang, Y. Xiao. Robust EEG feature learning model based on an adaptive weight and pairwise-fused Lasso[J]. 
\emph{Biomed. Signal Process. Control}, 68(2021), 102728, https://doi.org/10.1016/j.bspc.2021.102728.

\bibitem{YHMF}
 Y. Sun,   H.  Wang,    M. Fuentes. 
Fused adaptive Lasso for spatial and temporal quantile function estimation[J]. 
\emph{Technometrics}, 58(2016), 127–137. https://doi.org/10.1080/00401706.2015.1017115.

\bibitem{D2021}
Degras, D. Sparse group fused lasso for model segmentation: a hybrid approach[J].  
\emph{Adv. Data Anal. Classif.}, 15(2021), 625–671. https://doi.org/10.1007/s11634-020-00424-5.

\bibitem{CSM}
 S. Corsaro,  V. D. Simone,  Z. Marino. 
Fused Lasso approach in portfolio selection[J].
\emph{Ann. Oper. Res.}, 1(2019), 1-13. https://doi.org/10.1007/s10479-019-03289-w.

\bibitem{MT2023}
C. Ma, Y. Tu. Group fused Lasso for large factor models with multiple structural breaks[J].
\emph{J. Econom.}, 1(2023), 132-154. https://doi.org/10.1016/j.jeconom.2022.02.003.
\bibitem{TSRZ}
 R. Tibshirani,  M. Saunders,  S. Rosset,  J. Zhu. 
Sparsity and smoothness via the fused Lasso[J].
\emph{J. R. Stat. Soc. B}, 67(2005), 91-108. https://doi.org/10.1111/j.1467-9868.2005.00490.x.

\bibitem{LYJ2011}
J. Liu,  L. Yuan,   J.  Ye.  
An efficient algorithm for a class of fused lasso problems[C].
\emph{Int. Conf. Knowl. Discovery Data Min.}, 2010,25-28. https://doi.org/10.1145/1835804.1835847.

\bibitem{XMA2011}
X. Lin,  M. Pham,  and A. Ruszczynski. Alternating linearization for structured regularization problems[J].   
\emph{J. Mach. Learn. Res.},  15(1), 3447–3481 (2011).

\bibitem{XQSJ2012}
X. Chen, Q.  Lin,  S.  Kim,  J.G. Carbonell,  E.P. Xing. 
Smoothing proximal gradient method for general structured sparse regression[J]. 
\emph{Ann. Appl. Stat.}, 6(2), 719–752(2012).   https://doi.org/10.1214/11-AOAS514.

\bibitem{YWL}
D. Yu,  J. H. Won, T. Lee, J. Lim, S. Yoon.  
High-dimensional fused Lasso regression using majorization–minimization and parallel processing[J], 
\emph{J. Comput. Graphical Stat.}, 24(2013), 121-153.  https://doi.org/10.1080/10618600.2013.878662.

\bibitem{YX}
G. Ye,    X. Xie. 
Split Bregman method for large scale fused Lasso[J].
\emph{Comput. Statist. Data Anal.}, 55(2011), 1552-1569. https://doi.org/10.1016/j.csda.2010.10.021.

\bibitem{LMY}
X.  Li,  L. Mo,  X.  Yuan,  J. Zhang. 
Linearized alternating direction method of multipliers for  sparse group and fused LASSO models[J]. 
\emph{Comput. Statist. Data Anal.}, 79(2014), 203-221. https://doi.org/10.1016/j.csda.2014.05.017.

\bibitem{JLD}
 H.  Jiang,    S. Luo,    Y.  Dong. 
Simultaneous feature selection and clustering based on square root optimization[J]. 
\emph{Eur. J. Oper. Res.}, 289(2020), 214-231. https://doi.org/10.1016/j.ejor.2020.06.045.

\bibitem{LTZ}
Y.  Liu,    J. Tao,     H. Zhang,    X. Xu,    L. Kong. 
Fused LASSO penalized least absolute deviation estimator for high dimensional linear regression[J].
\emph{Numer. Algebra}, 8(2018), 97-117. https://doi.org/10.3934/naco.2018006.

\bibitem{W2023}
 X. Wu,   H. Ming,  Z. Cui,  Z. Zhang.  
Multi-block alternating direction method of multipliers for ultrahigh dimensional quantile fused regression[J]. 
\emph{Comput. Statist. Data Anal.}, 192(2024), 107901. https://doi.org/10.1016/j.csda.2023.107901.

\bibitem{HSS}
 X. Huang,   L. Shi,    J.  Suykens. 
Support vector machine classifier with pinball loss[J].
\emph{IEEE Trans. Pattern Anal. Mach. Intell.}, 36(2014), 984-997. https://doi.org/10.1109/tpami.2013.178.

\bibitem{L2023}
R.  Liang,   X.  Wu,   Z. Zhang.  
Linearized alternating direction method of multipliers for elastic-net support vector machines[J].  
\emph{Pattern Recognit.}, 148(2024), 110134. https://doi.org/10.1016/j.patcog.2023.110134.

\bibitem{CBYY}
 C. Chen,  B. He, Y. Ye, X. Yuan. 
The direct extension of ADMM for multi-block convex minimization problems is not necessary convergent[J]. 
\emph{Math. Program.}, 155(2016),  57-79. https://doi.org/10.1007/s10107-014-0826-5.
 
\bibitem{BY}
 B. He,   X. Yuan . 
On the O(1/n) Convergence rate of the douglas–rachford alternating direction method[J]. 
\emph{SIAM J. Numer. Anal.}, 50(2012), 700–709.  https://doi.org/10.1137/110836936.

\bibitem{BY2}
B. He,   X. Yuan,. 
On non-ergodic convergence rate of douglas–rachford alternating direction method of multipliers[J]. 
\emph{Numer. Math.}, 130(2015), 567-577. https://doi.org/10.1007/s00211-014-0673-6.

\bibitem{T}
 R. Tibshirani.  
Regression shrinkage and selection via the Lasso[J].
\emph{J. R. Stat. Soc. B}, 58(1996), 267-288.

\bibitem{RO}
 L. I.  Rudin,    S. Osher,    E. Fatemi. 
Nonlinear total variation based noise removal algorithms[J]. 
\emph{Physica D}, 60(1992), 259-268. 

\bibitem{IA2008}
I. Steinwart,  A. Christmann. 
Support vector machines[M]. 
Springer, New York, 2008.

\bibitem{K2005}
 R. Koenker. 
Quantile regression[M]. 
Cambridge University Press, Cambridge, 2005.

\bibitem{CDS}
C. Lu, D. S. Hochbaum. 
A unified approach for a 1D generalized total variation problem[J]. 
\emph{Math. Program.}, 194(2021), 1-28. https://doi.org/10.1007/s10107-021-01633-2.

\bibitem{SSP}
S.Ye, O. H. M. Padilla.
Non-parametric quantile regression via the KNN fused Lasso[J]. 
\emph{J. Mach. Learn. Res.}, 22(2021), 1-38. https://doi.org/10.48550/arXiv.2012.01758.

\bibitem{TSR}
 M.  Tanveer,    S.   Sharma,    R.   Rastogi,    P.   Anand. 
Sparse support vector machine with pinball loss[J].
\emph{Trans. Emerging Telecommun. Technol.}, 2019(3), 164-175. https://doi.org/10.1002/ett.3820.

\bibitem{WY2021}
 H. Wang,    Y. Xu. 
Sparse elastic net multi-label rank support vector machine with pinball loss and its applications[J].
\emph{Appl. Soft Comput.}, 104(2021), 107-232. https://doi.org/10.1016/j.asoc.2021.107232.

\bibitem{HSS2}
 X.  Huang,    L. Shi,    J.  Suykens. 
Sequential minimal optimization for SVM with pinball loss[J].
\emph{Neurocomputing}, 149(2015), 1596–1603. https://doi.org/10.1016/j.neucom.2014.08.033.

\bibitem{PW}
 B. Peng,    L. Wang. 
An iterative coordinate descent algorithm for high-dimensional nonconvex penalized quantile regression[J].
\emph{J. Comput. Graphical Stat.}, 24(2015), 676–694. https://doi.org/ 10.1080/10618600.2014.913516.

\bibitem{YH}
 C. Yi,    J. Huang. 
Semismooth newton coordinate descent algorithm for elastic-net penalized Huber loss regression and quantile regression[J].
\emph{J. Comput. Graphical Stat.}, 26(2016), 547–557. https://doi.org/10.1080/10618600.2016.1256816.

\bibitem{GFK}
 Y. Gu,     J. Fan,    L.  Kong,    S. Ma,     H. Zou. 
ADMM for high-dimensional sparse penalized quantile regression[J].
\emph{Technometrics}, 60(2017), 319-333. https://doi.org/10.1080/00401706.2017.1345703.

\bibitem{GL}
 G. H. Golub,  C. Loan. 
Matrix computations[M]. The third edition, 
Johns Hopkins University Press, New York, 1996.

\bibitem{YGS}
Y. Ye, J. Gao, Y. Shao,  C. Li,  Y. Jin,  X. Hua. 
Robust support vector regression with generic quadratic nonconvex $\varepsilon$-insensitive loss[J]. 
\emph{Appl. Math. Modell.}, 82(2020), 235-251. https://doi.org/10.1016/j.apm.2020.01.053.

\bibitem{CLZ}
K. Cheng, Z. Lu, Y. Zhou, Y. Shi, Y. Wei. 
Global sensitivity analysis using support vector regression[J]. 
\emph{Appl. Math. Modell.}, 49(2017), 587-598. https://doi.org/10.1016/j.apm.2017.05.026.

\bibitem{ZLZ}
H. Zhao, S. Li, Z. Ru. 
Adaptive reliability analysis based on a support vector machine and its application to rock engineering[J]. 
\emph{Appl. Math. Modell.}, 44(2017), 508-522. https://doi.org/10.1016/j.apm.2017.02.020.

\bibitem{GRC}
S. Ghosh, A. Roy, S. Chakraborty. 
Support vector regression based metamodeling for seismic reliability analysis of structures[J].
\emph{Appl. Math. Modell.}, 64(2018), 584-602. https://doi.org/10.1016/j.apm.2018.07.054.


\bibitem{BYW}
 B. He,  H. Yang,  S. Wang. 
Alternating direction method with self-adaptive penalty parameters for monotone variational inequalities[J]. 
\emph{J. Optim. Theory Appl.}, 23(2000), 349-368. https://doi.org/10.1023/A:1004603514434.

\bibitem{SNEBJ}
 S. Boyd,  N.  Parikh,  E.  Chu,   B.   Peleato,   J.  Eckstein,. 
Distributed optimization and statistical learning via the alternating direction method of multipliers[J].
\emph{Found. Trends Mach. Learn.}, 3(2010), 1-122. https://doi.org/10.1561/2200000016.

\bibitem{BFX}
 B. He,  F. Ma,   X. Yuan. 
Optimally linearizing the alternating direction method of multipliers for convex programming[J]. 
\emph{Comput. Optim. Appl.}, 75(2020), 361-388. https://doi.org/10.1007/s10589-019-00152-3.

\bibitem{H2010}
H. Hoefling,. 
A path algorithm for the fused Lasso signal approximator[J].
\emph{J. Comput. Graphical Stat.}, 19(2010), 984-1006. https://doi.org/10.1198/jcgs.2010.09208.

\bibitem{PS2016}
 A. Parekh,   I. W.  Selesnick.
Convex fused Lasso denoising with non-convex regularization and its use for pulse detection[C].
\emph{Signal Process. Med. Biol. Symp.}, 1-6(2016). https://doi.org/10.1109/SPMB.2015.7405474.


\bibitem{BBJHVRS}
 M. Bredel,  C. Bredel,  D. Juric,  G. R. Harsh,  H. Vogel,  L. D. Recht,   B. I. Sikic. 
High-resolution genome-wide mapping of genetic alterations in human glial brain tumors[J].
\emph{Cancer Res.}, 65(2005), 4088-4096. https://doi.org/10.1158/0008-5472.CAN-04-4229.

\bibitem{TP}
 R. Tibshirani,    P.  Wang . 
Spatial smoothing and hot spot detection for CGH data using the fused Lasso[J].
\emph{Biostat.}, 9(2008), 18-29. https://doi.org/10.1093/biostatistics/kxm013.




\end{thebibliography}
\end{document}